\title[Semiclassical measures on hyperbolic surfaces]%
{Semiclassical measures on hyperbolic surfaces\\
have full support}
\author{Semyon Dyatlov}
\email{dyatlov@math.berkeley.edu}
\address{Department of Mathematics, Massachusetts Institute of Technology,
77 Massachusetts Ave, Cambridge, MA 02139}
\address{Department of Mathematics, University of California, Berkeley, CA 94720}
\author{Long Jin}
\email{ljin@math.tsinghua.edu.cn}
\address{Yau Mathematical Sciences Center, Tsinghua University, Beijing, China}
\address{Department of Mathematics, Purdue University,
150 N. University St, West Lafayette, IN 47907}
\begin{document}

\begin{abstract}
We show that each limiting semiclassical measure obtained from a sequence of eigenfunctions of the Laplacian
on a compact hyperbolic surface is supported on the entire cosphere bundle.
The key new ingredient for the proof is the fractal uncertainty principle, first formulated in~\cite{hgap} and proved for porous sets in~\cite{fullgap}.
\end{abstract}

\maketitle

\addtocounter{section}{1}
\addcontentsline{toc}{section}{1. Introduction}

Let $(M,g)$ be a compact (connected) hyperbolic surface, that is a Riemannian
surface of constant curvature $-1$.
Denote by $\Delta$ the (nonpositive) Laplace--Beltrami operator.
We fix a semiclassical quantization procedure (see~\S\ref{s:prelim-opers})
$$
a\in C_0^\infty(T^*M)\ \mapsto\ \Op_h(a):L^2(M)\to L^2(M),\quad
h>0.
$$
Assume that $u_j$ is a sequence of  eigenfunctions of $-\Delta$
with eigenvalues $h_j^{-2}\to \infty$:
\begin{equation}
  \label{e:u-j}
(-h_j^2\Delta-I)u_j=0,\quad
\|u_j\|_{L^2}=1,\quad
h_j>0,\quad
h_j\to 0\quad\text{as }j\to\infty.
\end{equation}
We say that $u_j$ converge semiclassically to some probability measure $\mu$ on $T^*M$
if
$$
\langle \Op_{h_j}(a)u_j,u_j\rangle_{L^2}\to \int_{T^*M} a\,d\mu\quad\text{as }j\to\infty
\text{ for all }a\in C_0^\infty(T^*M).
$$
We say $\mu$ is a \emph{semiclassical defect measure} (or in short, semiclassical measure) if $\mu$ is the semiclassical limit
of some sequence of eigenfunctions.
It is well-known (see for instance~\cite[\S\S5.1,5.2]{e-z}) that
each semiclassical defect measure is supported on the cosphere bundle $S^*M\subset T^*M$
and it is invariant under the geodesic flow $\varphi_t:S^*M\to S^*M$. However
not every invariant measure can be a semiclassical defect measure as follows
from our first result: 
\begin{theo}
  \label{t:basic}
Let $\mu$ be a semiclassical defect measure. Then $\supp \mu=S^*M$,
that is for every nonempty open set $\mathcal U\subset S^*M$
we have $\mu(\mathcal U)>0$.
\end{theo}
If $a\in C^\infty(M)$ depends only on $x$, then $\Op_h(a)$ is the multiplication
operator by~$a$. Therefore Theorem~\ref{t:basic} implies that the support of any weak limit of the measures $|u_j|^2\,d\vol_g$ (often called \emph{quantum limit}) is equal to $M$.

The \emph{quantum ergodicity} theorem of Shnirelman, Zelditch, and Colin de Verdi\`ere~\cite{Shnirelman,ZelditchQE,CdV} (see also Helffer--Martinez--Robert and Zelditch--Zworski~\cite{HMR,ZelditchZworski}
for more general versions)
implies that there is a density one sequence of eigenvalues of $\Delta$ such that
the corresponding eigenfunctions converge weakly to the Liouville measure $\mu_L$.
The \emph{quantum unique ergodicity (QUE)} conjecture of Rudnick and Sarnak~\cite{RudnickSarnak}
states that $\mu_L$ is the only semiclassical measure. This conjecture was
proved for Hecke forms on arithmetic surfaces (such as the modular surface)
by Lindenstrauss and Soundararajan~\cite{Lindenstrauss,Sound}.
For the related setting of Eisenstein series see Luo--Sarnak and Jakobson~\cite{LuoSarnak,JakobsonQUE}.
For the history of the QUE conjecture we refer the reader to the reviews of 
Marklof~\cite{MarklofReview}, Zelditch~\cite{ZelditchReview}, and Sarnak~\cite{SarnakQUE}.

In the more general setting of manifolds with Anosov geodesic flows,
restrictions on possible semiclassical measures have been obtained
by Anantharaman and Anantha\-raman--Nonnenmacher~\cite{Anantharaman,AnantharamanNonnenmacher};
see also Rivi\`ere~\cite{Riviere1,Riviere2}
and Anantha\-raman--Silberman~\cite{AnantharamanSilberman}.
In particular, \cite[Theorem~1.2]{AnantharamanNonnenmacher} shows
that every semiclassical measure on a hyperbolic surface
has Kolmogorov--Sinai entropy $\geq 1/2$. For comparison,
the Liouville measure has entropy 1 and the delta
measure on a closed geodesic has entropy 0.
Examples of manifolds with
ergodic but non-Anosov geodesic flows with quasimodes and eigenfunctions
which violate QUE have been constructed by Donnelly~\cite{Donnelly} and
Hassell~\cite{Hassell}; see also Faure--Nonnenmacher--de Bi\`evre~\cite{FNB}.

Theorem~\ref{t:basic} is in some sense orthogonal to the entropy bounds
discussed above.
For instance, Theorem~\ref{t:basic} excludes the case of
$\mu$ supported on a set of dimension $3-\varepsilon$,
which might have entropy very close to~1.
On the other hand, it does not exclude the case
$\mu=\alpha \mu_L+(1-\alpha)\mu_0$, where $\mu_0$ is a delta measure
on a closed geodesic and $0<\alpha\leq 1$,
while the entropy bound excludes such measures with $\alpha<1/2$.
Theorem~\ref{t:basic} also does not exclude
the case when $\mu$ is a countable linear combination
of the measures~$\delta_{\gamma_k}$
where $\{\gamma_k\}_{k=1}^\infty$ are all the closed geodesics:
for instance, $\mu=\sum_{k=1}^\infty2^{-k}\delta_{\gamma_k}$ satisfies
$\supp\mu=S^*M$.

Our second result is a more quantitative version of Theorem~\ref{t:basic}:
\begin{theo}
  \label{t:estimate}
Assume that $a\in C_0^\infty(T^*M)$ and $a|_{S^*M}\not\equiv 0$.
Then there exist constants $\mathbf C(a),\mathbf h_0(a)>0$ depending only on $M,a$
such that for $0<h<\mathbf h_0(a)$ and all $u\in H^2(M)$
\begin{equation}
  \label{e:estimate}
\|u\|_{L^2}\leq 
\mathbf C(a)\|\Op_h(a)u\|_{L^2}+{\mathbf C(a) \log(1/h)\over h}\big\|(-h^2\Delta-I)u\big\|_{L^2}.
\end{equation}
\end{theo}
Theorem~\ref{t:basic} follows immediately from Theorem~\ref{t:estimate}.
Indeed, take $a\in C_0^\infty(T^*M)$ such that $a|_{S^*M}\not\equiv 0$
but $\supp a\cap S^*M\subset \mathcal U$. Let $u_j$, $h_j$ satisfy~\eqref{e:u-j}.
Then~\eqref{e:estimate} implies that
$\|\Op_{h_j}(a)u_j\|_{L^2}\geq \mathbf C(a)^{-1}$ for large $j$.
However, if $u_j$ converge semiclassically to some measure $\mu$, then
$$
\|\Op_{h_j}(a)u_j\|_{L^2}^2\to \int_{T^*M}|a|^2\,d\mu\quad\text{as }j\to\infty.
$$
It follows that $\int |a|^2\,d\mu>0$ and thus $\mu(\mathcal U)>0$.

The above argument shows that Theorem~\ref{t:basic} still holds
if we replace the requirement
$(-h_j^2\Delta-I)u_j=0$ in~\eqref{e:u-j}
by $\|(-h_j^2\Delta-I)u_j\|_{L^2}=o(h_j/\log(1/h_j))$,
that is it applies to $o(h/\log(1/h))$ quasimodes.
This quasimode strength is almost sharp;
indeed, Brooks, Eswarathasan--Nonnenmacher, and Eswarathasan--Silberman~\cite{Brooks,Suresh1,Suresh2} construct
a family of $\mathcal O(h/\log(1/h))$ quasimodes
which do not converge to $\mu_L$. In particular, \cite[Proposition~1.9]{Suresh1}
gives $\mathcal O(h/\log(1/h))$ quasimodes which
converge semiclassically to the delta measure on any given closed geodesic.
We remark that the factor $h^{-1}\log(1/h)$ in~\eqref{e:estimate}
is reminiscent of the scattering resolvent bounds on the real line
for mild hyperbolic trapping, see~\cite[\S 3.2]{ZworskiReview} and the references there.

Theorem~\ref{t:estimate} has applications to control for the Schr\"odinger equation~\cite{JinControl} and its proof can be adapted to show exponential energy decay for the damped wave equation~\cite{JinDWE}.

We would also like to mention
a recent result of Logunov--Malinnikova~\cite{Remez}
giving a bound of the following form for an eigenfunction $u$,
$(-h^2\Delta-I)u=0$:
\begin{equation}
  \label{e:loma-1}
\sup_\Omega |u|\geq C^{-1} \big(\vol_g(\Omega)/C\big)^{-C/h}
\cdot \sup_M |u|
\end{equation}
where $C$ is a constant depending only on $M$. The bound~\eqref{e:loma-1}
holds on any closed Riemannian manifold and for any subset $\Omega\subset M$ of positive volume.
For hyperbolic surfaces and $\Omega$ having nonempty interior, Theorem~\ref{t:estimate}
together with the unique continuation principle give the bound
\begin{equation}
  \label{e:loma-2}
\|u\|_{L^2(\Omega)}\geq c_\Omega \|u\|_{L^2(M)}
\end{equation}
where $c_\Omega>0$ is a constant depending on $M,\Omega$ but not on $h$.
Unlike~\eqref{e:loma-1}, the bound~\eqref{e:loma-2} cannot hold for general Riemannian manifolds:
if $M$ is the round sphere and $\Omega$ lies strictly inside one hemisphere, then
there exists a sequence of Gaussian beam eigenfunctions $u$ concentrating on the equator
with $\|u\|_{L^2(\Omega)}\leq e^{-C/h}\|u\|_{L^2(M)}$.

\subsection{Outline of the proof}

We give a rough outline of the proof of Theorem~\ref{t:estimate}, assuming for simplicity that
$(-h^2\Delta-I)u=0$. We write
$$
u=A_{\mathcal X}u+A_{\mathcal Y}u
$$
where $A_{\mathcal X}$, $A_{\mathcal Y}$ are constructed from
two fixed pseudodifferential operators $A_1,A_2$ conjugated by the wave propagator
for times up to $2\rho\log(1/h)$,
see~\eqref{e:A-subset} and~\eqref{e:XY-def}.
The parameter $\rho$ is chosen less than~1 but is very close to~1, see
the remark following Proposition~\ref{l:ultimate-fup}.
The operators $A_{\mathcal X},A_{\mathcal Y}$ formally correspond to symbols
$a_{\mathcal X},a_{\mathcal Y}$ such that for some small parameter $\alpha>0$
\begin{itemize}
\item for $(x,\xi)\in\supp a_{\mathcal X}$, at most
$2\alpha\log(1/h)$ of the points
\begin{equation}
  \label{e:rooby}
\varphi_j(x,\xi),\quad j=0,1,\dots,2\rho\log(1/h)
\end{equation}
lie in $\{a\neq 0\}$. That is, the geodesic $\varphi_t(x,\xi)$,
$0\leq t\leq 2\rho\log(1/h)$ spends very little time in $\{a\neq 0\}$;
\item for $(x,\xi)\in\supp a_{\mathcal Y}$, at least
${1\over 10}\alpha\log(1/h)$ points~\eqref{e:rooby}
lie in $\{a\neq 0\}$.
\end{itemize}
To explain the intuition behind the argument, we
first consider the case when $\alpha=0$, that is
for $(x,\xi)\in \supp a_{\mathcal X}$
none of the points~\eqref{e:rooby} lie in $\{a\neq 0$\}.
(In the argument for general $\alpha$ leading to~\eqref{e:esta-2}, putting $\alpha=0$
is equivalent to taking $\alpha\sim 1/\log(1/h)$.)
One can view $\{a\neq 0\}$ as a `hole' in $S^*M$
and $\supp a_{\mathcal X}$ is contained in the set of `forward trapped'
geodesics (that is, those that do not go through the hole).
On the other hand, points $(x,\xi)$ in $\supp a_{\mathcal Y}$
are \emph{controlled} in the sense that $\varphi_j(x,\xi)$ lies in the hole 
for some $j\in [0,2\rho\log(1/h)]$. Therefore one hopes to control
$A_{\mathcal Y}u$ in terms of $\Op_h(a)u$ using Egorov's theorem and the fact
that $u$ is an eigenfunction of the Laplacian~-- see~\eqref{e:esta-2.0} below.

The operator
$A_{\mathcal X}$ is not pseudodifferential
because it corresponds to propagation for time $2\rho\log(1/h)$ which
is much larger than the Ehrenfest time~$\log(1/h)$. However, conjugating
$A_{\mathcal X}$ by the wave group we obtain a product
of the form $\mathcal A_-\mathcal A_+$ where
the symbols $a_\pm$ corresponding to $\mathcal A_\pm$
satisfy
$$
\varphi_{\mp j}(\supp a_\pm)\cap \{a\neq 0\}=\emptyset\quad\text{for all }
j=0,1,\dots,\rho\log(1/h).
$$
That is, $\supp a_-$ is `forward trapped'
and $\supp a_+$ is `backward trapped'.
The operators $\mathcal A_\pm$ lie in the calculi associated
to the weak unstable/stable Lagrangian foliations on $T^*M\setminus 0$
similar to the ones developed by Dyatlov--Zahl~\cite{hgap},
see~\S\ref{s:fancy-calculus} and the Appendix.
More precisely, the symbol $a_+$ is regular along the weak unstable
foliation and $a_-$ is regular along the weak stable foliation.
The constant curvature condition plays an important role in defining these calculi associated to Lagrangian foliations. On a general surface with negative curvature, the weak unstable/stable Lagrangian foliations are only H\"{o}lder continuous instead of smooth.

Using unique ergodicity of horocyclic flows due to Furstenberg~\cite{Furstenberg} we show that
$\supp a_+$ is porous in the stable direction and $\supp a_-$ is porous in the unstable direction
(see Definition~\ref{d:porous-TM} and Lemma~\ref{l:porosity-verified}).
Then the fractal uncertainty principle of Bourgain--Dyatlov~\cite{hgap} implies
that $\|\mathcal A_-\mathcal A_+\|_{L^2\to L^2}\leq Ch^\beta$
for some $\beta>0$ and thus (see Proposition~\ref{l:ultimate-fup})
\begin{equation}
  \label{e:esta-1}
\|A_{\mathcal X}u\|_{L^2}\leq Ch^\beta\|u\|_{L^2}.
\end{equation}
We stress that just like the operator $A_{\mathcal X}$, the product $\mathcal A_-\mathcal A_+$
is not a pseudodifferential operator since it corresponds to propagation for time $\rho\log(1/h)>{1\over 2}\log(1/h)$ in \emph{both} time directions.
(In fact, if
$\mathcal A_-\mathcal A_+$ were pseudodifferential with symbol $a_-a_+$, we would expect the left-hand
side of~\eqref{e:esta-1} to be asymptotic to $\sup |a_-a_+|=1$.)
However since $\rho<1$ each of the operators $\mathcal A_\pm$,
corresponding to propagation for time $\rho\log(1/h)$ in \emph{one} time direction, is still
pseudodifferential in an anisotropic class, see~\S\ref{s:fancy-calculus}
(but the product $\mathcal A_-\mathcal A_+$ is not pseudodifferential
since the calculi in which $\mathcal A_-$ and $\mathcal A_+$ lie are incompatible with each other).
The norm estimate~\eqref{e:esta-1} uses fractal uncertainty principle, which is a tool from harmonic analysis,
and in some sense goes beyond the classical/quantum correspondence.

To estimate $A_{\mathcal Y}u$ in the case $\alpha=0$,
we can break it into pieces, each of which
corresponds to the condition $\varphi_j(x,\xi)\in \{a\neq 0\}$
for some $j=0,1,\dots,2\rho\log(1/h)$.
Since $(-h^2\Delta-I)u=0$, $u$ is equivariant under the
wave propagator; therefore, each piece can be controlled by $\Op_h(a)u$.
Summing over $j$, we get
\begin{equation}
  \label{e:esta-2.0}
\|A_{\mathcal Y}u\|_{L^2}\leq C\log(1/h)\|\Op_h(a)u\|_{L^2}
+\mathcal O(h^\infty)\|u\|_{L^2}.
\end{equation}

Combining~\eqref{e:esta-1} and~\eqref{e:esta-2.0}
we get~\eqref{e:estimate}, however the term
$\|\Op_h(a)u\|_{L^2}$ comes with an extra factor of $\log(1/h)$.
To remove this factor, we take $\alpha$ small, but positive.
The estimate~\eqref{e:esta-1} still holds
as long as $\alpha$ is chosen small enough depending on the
fractal uncertainty exponent~$\beta$, see~\eqref{e:X-totaled}.
Moreover, we get the following
improved version of~\eqref{e:esta-2.0} for some $\varepsilon>0$
(see Proposition~\ref{l:nontrapped-estimate};
one can take $\varepsilon=1/8$)
\begin{equation}
  \label{e:esta-2}
\|A_{\mathcal Y}u\|_{L^2}\leq {C\over\alpha}\|\Op_h(a)u\|_{L^2}+\mathcal O(h^\varepsilon)\|u\|_{L^2}.
\end{equation}
Combining~\eqref{e:esta-1} and~\eqref{e:esta-2} gives
the required bound~\eqref{e:estimate}.

The estimate~\eqref{e:esta-2} is delicate because $A_{\mathcal Y}$ is not pseudodifferential.
To prove it, we adapt some of the methods of~\cite{Anantharaman}.
More precisely, if we replace $2\rho\log(1/h)$ by $\tilde\varepsilon\log(1/h)$ for small enough
$\tilde \varepsilon>0$ in the definition of $A_{\mathcal Y}$, then $A_{\mathcal Y}$ is pseudodifferential in a mildly exotic calculus
and one can use a semiclassical version of the Chebyshev inequality
(see Lemma~\ref{l:Y-control-1}) to establish~\eqref{e:esta-2}. To
pass from short logarithmic times to time $2\rho\log(1/h)$, we use
a submultiplicative estimate, see the end of~\S\ref{s:nontrapped-proof}.

\section{Preliminaries}
  \label{s:prelims}

\subsection{Dynamics of geodesic and horocyclic flows}

Let $(M,g)$ be a compact hyperbolic surface and $T^*M\setminus 0$
consist of elements of the cotangent bundle $(x,\xi)\in T^*M$ such that $\xi\neq 0$.
Denote by $S^*M=\{|\xi|_g=1\}$ the cosphere bundle.
Define the symbol $p\in C^\infty(T^*M\setminus 0;\mathbb R)$ by
\begin{equation}
  \label{e:p-def}
p(x,\xi)=|\xi|_g.
\end{equation}
The Hamiltonian flow of $p$,
\begin{equation}
  \label{e:p-flow}
\varphi_t:=\exp(tH_p):T^*M\setminus 0\to T^*M\setminus 0
\end{equation}
is the homogeneous geodesic flow.

Henceforth we assume that $M$ is orientable; if not, we may pass to a double cover of $M$.
We use an explicit frame on $T^*M\setminus 0$ consisting of four vector fields
\begin{equation}
  \label{e:canonical-fields}
H_p,U_+,U_-,D
\in C^\infty\big(T^*M\setminus 0;T(T^*M\setminus 0)\big).
\end{equation}
Here $H_p$ is the generator of $\varphi_t$ and $D=\xi\cdot\partial_\xi$
is the generator of dilations. The vector fields $U_\pm$
are defined on $S^*M$ as stable ($U_+$)
and unstable ($U_-$) horocyclic vector fields and extended homogeneously
to $T^*M\setminus 0$, so that
\begin{equation}
  \label{e:comm-rel-0}
[U_\pm,D]=[H_p,D]=0.
\end{equation}
See for instance~\cite[(2.1)]{rrh}. The vector fields $U_\pm$
are tangent to the level sets of $p$ and
satisfy the commutation relations
\begin{equation}
  \label{e:comm-rel}
[H_p,U_\pm]=\pm U_\pm.
\end{equation}
Thus on each level set of $p$, the flow $\varphi_t$ has a flow/stable/unstable decomposition,
with $U_+$ spanning the stable space and $U_-$ spanning the unstable space;
see for instance~\cite[(3.14)]{rrh}.
We use the following notation for the weak stable/unstable spaces:
\begin{equation}
  \label{e:l-foliations}
L_s:=\Span(H_p,U_+),\quad
L_u:=\Span(H_p,U_-)\
\subset\
T(T^*M\setminus 0).
\end{equation}
Then $L_s,L_u$ are Lagrangian foliations, see~\cite[Lemma~4.1]{hgap}.

The next statement, used in~\S\ref{s:pf-fup} to establish
the porosity condition, is a consequence of the unique ergodicity of horocyclic flows,
see~\cite{Furstenberg,Marcus,Ratner,Coudene,Hubbard}.
\begin{prop}
  \label{l:horocycle-unique}
Let $\mathcal U\subset S^*M$ be a nonempty open set. Then there exists
$T>0$ depending only on $M,\mathcal U$ such that for all $(x,\xi)\in S^*M$,
\begin{equation}
\label{e:recurrence}
\{e^{s U_\pm}(x,\xi)\mid 0\leq s\leq T\}\cap \mathcal U\neq\emptyset.
\end{equation}
\end{prop}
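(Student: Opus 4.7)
The plan is to derive the uniform recurrence statement from the unique ergodicity of the stable and unstable horocycle flows $e^{sU_\pm}$ on $S^*M$, which for a compact hyperbolic surface is the classical theorem of Furstenberg \cite{Furstenberg} (and is reproved in the other references cited just above the statement).

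I would fix a nonnegative $f\in C(S^*M)$ with $\supp f\subset\mathcal U$ and $\int_{S^*M} f\,d\mu_L =: c>0$, where $\mu_L$ is the normalized Liouville measure on $S^*M$. A standard general fact, provable by a compactness argument applied to the empirical measures $\frac{1}{T}\int_0^T \delta_{e^{sU_+}(x,\xi)}\,ds$, is that unique ergodicity of a continuous flow on a compact space is equivalent to uniform convergence of Birkhoff time averages for every continuous observable. Applying this to $f$ and the flow $e^{sU_+}$ yields
\begin{equation*}
\frac{1}{T}\int_0^T f\bigl(e^{sU_+}(x,\xi)\bigr)\,ds\ \longrightarrow\ c
\qquad\text{as }T\to\infty,
\end{equation*}
uniformly in $(x,\xi)\in S^*M$. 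Hence there is $T_+=T_+(M,\mathcal U)>0$ such that, for every $(x,\xi)$, the above average at $T=T_+$ exceeds $c/2>0$. In particular the integrand cannot vanish identically on $[0,T_+]$, so $\{e^{sU_+}(x,\xi)\mid 0\le s\le T_+\}$ must intersect $\supp f\subset\mathcal U$.

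Running the same argument with $U_-$ in place of $U_+$ produces a time $T_-$, and the desired uniform constant is $T:=\max(T_+,T_-)$. The only step that might warrant comment is the equivalence just used between unique ergodicity and uniformity of Birkhoff averages; this is a standard fact which I would simply cite rather than reprove. With that ingredient in place no serious obstacle remains, which is why the statement is included as a short consequence of the quoted ergodic-theoretic literature rather than proved from scratch.
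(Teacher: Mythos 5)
Your proposal is correct and follows essentially the same route as the paper: choose $f$ supported in $\mathcal U$ with positive Liouville average, use unique ergodicity of $e^{sU_\pm}$ to get uniform convergence of the time averages (the paper proves this via exactly the compactness argument on empirical measures that you sketch, rather than citing it), and conclude that a long enough orbit segment must meet $\supp f\subset\mathcal U$. No substantive difference.
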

\begin{proof}
We focus on the case of $U_+$; the same proof applies to $U_-$.
Denote by $\mu_L$ the Liouville probability measure on $S^*M$. 
By the unique ergodicity of the horocyclic flow~$e^{sU_+}$,
$\mu_L$ is the only probability measure on $S^*M$
invariant under $e^{sU_+}$.

Let $f\in C(S^*M)$ be a continuous function. Then we have uniform convergence
\begin{equation}
  \label{e:horun}
\langle f\rangle_T:={1\over T}\int_0^T f\circ e^{sU_+}\,ds\to \langle f\rangle_\mu:=\int_{S^*M}f\,d\mu_L\quad\text{as }T\to \infty.
\end{equation}
Indeed, assume that~\eqref{e:horun} is false. Then there exists
$\varepsilon>0$ and
sequences $T_k\to\infty$, $(x_k,\xi_k)\in S^*M$ such that
\begin{equation}
  \label{e:horun2}
\big|\langle f\rangle_{T_k}(x_k,\xi_k)-\langle f\rangle_\mu\big|\geq\varepsilon.
\end{equation}
Consider the probability measures $\nu_k$ on $S^*M$ defined by
$$
\int_{S^*M}g\,d\nu_k=\langle g\rangle_{T_k}(x_k,\xi_k)\quad\text{for all }g\in C(S^*M).
$$
Passing to a subsequence, we may assume that $\nu_k$ converge weakly
to some probability measure $\nu$. Since $T_k\to\infty$, the measure
$\nu$ is invariant under the flow $e^{sU_+}$, thus $\nu=\mu_L$.
However, $\int f\,d\nu\neq \int f\,d\mu_L$ by~\eqref{e:horun2}, giving a contradiction.
This finishes the proof of~\eqref{e:horun}.

Now, choose $f\in C(S^*M)$ such that
$$
\supp f\subset\mathcal U,\quad
\langle f\rangle_\mu=1.
$$
By~\eqref{e:horun}, there exists $T>0$ such that
$\langle f\rangle_T>1/2$ everywhere. This implies~\eqref{e:recurrence}.
\end{proof}

\subsection{Operators and propagation}
  \label{s:prelim-opers}

We use the standard classes of semiclassical pseudodifferential operators
with classical symbols $\Psi^k_h(M)$, with $\Psi^{\comp}_h(M)$ denoting operators $A\in\Psi^k_h(M)$
such that the wavefront set $\WFh(A)$ is a compact subset of $T^*M$.
We refer the reader to the book of Zworski~\cite{e-z}
for an introduction to semiclassical analysis used in this paper, to~\cite[\S14.2.2]{e-z}
for pseudodifferential operators on manifolds,
and to~\cite[\S E.1.5]{dizzy} and~\cite[\S2.1]{hgap} for the classes $\Psi^k_h(M)$ used here.
Denote by $S^k(T^*M)$ the corresponding symbol classes,
and by
$$
\sigma_h:\Psi^k_h(M)\to S^k(T^*M),\quad
\Op_h:S^k(T^*M)\to \Psi^k_h(M)
$$
the principal symbol map and a (non-canonical) quantization map.
For $A,B\in\Psi^k_h(M)$ and an open set $U\subset T^*M$, we say that
$A=B+\mathcal O(h^\infty)$ \emph{microlocally on} $U$,
if $\WFh(A-B)\cap U=\emptyset$.

We have the following norm bound:
\begin{equation}
  \label{e:basic-norm-bound}
A\in\Psi^0_h(M),\quad
\sup|\sigma_h(A)|\leq 1
\quad\Longrightarrow\quad
\|A\|_{L^2\to L^2}\leq 1+Ch.
\end{equation}
Indeed, applying the
sharp G\r arding inequality~\cite[Theorem~4.32]{e-z} to
the operator $I-A^*A$ we get for all $u\in L^2(M)$
$$
\|u\|_{L^2}^2-\|Au\|_{L^2}^2=
\langle (I-A^*A)u,u\rangle_{L^2}\geq -Ch\|u\|_{L^2}^2
$$
which gives~\eqref{e:basic-norm-bound}.

The operator $-h^2\Delta$ lies in $\Psi^2_h(M)$ and, with $p$ defined in~\eqref{e:p-def},
$$
\sigma_h(-h^2\Delta)=p^2.
$$
For us it will be convenient to have an operator with principal symbol
$p$, since the corresponding Hamiltonian flow is homogeneous. Of course,
we have to cut away from the zero section as $p$ is not smooth there.
We thus fix a function
$$
\psi_P\in C_0^\infty((0,\infty);\mathbb R),\quad
\psi_P(\lambda)=\sqrt{\lambda}\quad\text{for }{1\over 16}\leq \lambda\leq 16,
$$
and define the operator
\begin{equation}
  \label{e:the-P}
P:=\psi_P(-h^2\Delta),\quad
P^*=P.
\end{equation}
By the functional calculus of pseudodifferential operators,
see~\cite[Theorem~14.9]{e-z} or~\cite[\S8]{DimassiSjostrand}, we have
\begin{equation}
  \label{e:the-P-2}
P\in\Psi^{\comp}_h(M),\quad
\sigma_h(P)=p\quad\text{on }\{1/4\leq |\xi|_g\leq 4\}.
\end{equation}
To quantize the flow $\varphi_t$, we use the propagator
\begin{equation}
  \label{e:U-t}
U(t):=\exp\Big(-{itP\over h}\Big):L^2(M)\to L^2(M).
\end{equation}
The operator $U(t)$ is unitary on $L^2(M)$.

For a bounded operator $A:L^2(M)\to L^2(M)$, define
\begin{equation}
  \label{e:A-t}
A(t):=U(-t)A U(t).
\end{equation}
If $A\in \Psi^{\comp}_h(M)$, $\WFh(A)\subset \{1/4<|\xi|_g<4\}$,
and $t$ is bounded uniformly in $h$,
then Egorov's theorem~\cite[Theorem~11.1]{e-z}
implies that
\begin{equation}
  \label{e:basic-egorov}
A(t)\in \Psi^{\comp}_h(M);\quad
\sigma_h(A(t))=\sigma_h(A)\circ\varphi_t.
\end{equation}

\subsection{Anisotropic calculi and long time propagation}
  \label{s:fancy-calculus}

If $A\in\Psi^{\comp}_h(M)$ and $t$ grows with $h$ then $A(t)$ will generally not be pseudodifferential
in the class $\Psi^{\comp}_h$
since the derivatives of the symbol $\sigma_h(A)\circ\varphi_t$
may grow exponentially with $t$.
In this section we introduce a more general calculus which contains
the operators $A(t)$ for $|t|\leq \rho\log(1/h)$, $\rho<1$.
(More precisely, we will have two calculi, one of which works for $t\geq 0$
and the other, for $t\leq 0$.)
Our calculus is similar to the one developed in~\cite[\S3]{hgap},
with remarks on the differences of these two calculi
and the proofs of some of the properties of the calculus contained the Appendix.

Fix $\rho\in [0,1)$ and let $L\in \{L_u,L_s\}$ where the Lagrangian foliations $L_u,L_s$
are defined in~\eqref{e:l-foliations}. Define the class of $h$-dependent symbols
$S^{\comp}_{L,\rho}(T^*M\setminus 0)$ as follows:
$a\in S^{\comp}_{L,\rho}(T^*M\setminus 0)$ if
\begin{enumerate}
\item $a(x,\xi;h)$ is smooth in $(x,\xi)\in T^*M\setminus 0$,
defined for $0<h\leq 1$,
and supported in an $h$-independent compact
subset of $T^*M\setminus 0$;
\item $\sup_{x,\xi}|a(x,\xi;h)|\leq C$ for some constant $C$ and all $h$;
\item $a$ satisfies the derivative bounds
\begin{equation}
  \label{e:symbol-derby}
\sup_{x,\xi}|Y_1\ldots Y_mZ_1\ldots Z_k a(x,\xi;h)|\leq Ch^{-\rho k-\varepsilon},\quad
0<h\leq 1
\end{equation}
for all $\varepsilon>0$ and all vector fields
$Y_1,\dots,Y_m,Z_1,\dots,Z_k$ on $T^*M\setminus 0$ such that
$Y_1,\dots,Y_m$ are tangent to $L$.
Here the constant $C$ depends on $Y_1,\dots,Y_m$, $Z_1,\dots,Z_k$, and~$\varepsilon$
but does not depend on $h$.
\end{enumerate}
This class is slightly larger than the one in~\cite[Definition~3.2]{hgap}
because  we require~\eqref{e:symbol-derby}
to hold for all $\varepsilon>0$, while~\cite{hgap} had $\varepsilon:=0$.

We use the following notation:
$$
f(h)=\mathcal O(h^{\alpha-})\quad\text{if }f(h)=\mathcal O(h^{\alpha-\varepsilon})\text{ for all }\varepsilon>0.
$$
In terms of the frame~\eqref{e:canonical-fields}, the derivative
bounds~\eqref{e:symbol-derby} become
\begin{align}
  \label{e:derb-s}
\sup_{x,\xi} \big|H_p^k U_+^\ell U_-^m D^n a(x,\xi;h)|&=\mathcal O(h^{-\rho(m+n)-})\quad\text{for }L=L_s,\\
  \label{e:derb-u}
\sup_{x,\xi} \big|H_p^k U_-^\ell U_+^m D^n a(x,\xi;h)|&=\mathcal O(h^{-\rho(m+n)-})\quad\text{for }L=L_u.
\end{align}
If $a\in C_0^\infty(T^*M\setminus 0)$ is an $h$-independent symbol, then it follows
from the commutation relations~\eqref{e:comm-rel-0} and~\eqref{e:comm-rel} that
$$
H_p^k U_+^\ell U_-^m D^n(a\circ\varphi_t)=e^{(m-\ell)t}(H_p^k U_+^\ell U_-^m D^na)\circ\varphi_t.
$$
Therefore
\begin{equation}
  \label{e:derbound-long-stable}
a\circ\varphi_t\in S^{\comp}_{L_s,\rho}(T^*M\setminus 0)\quad\text{uniformly in }
t,\quad
0\leq t\leq \rho\log(1/h).
\end{equation}
Similarly
\begin{equation}
  \label{e:derbound-long-unstable}
a\circ\varphi_{-t}\in S^{\comp}_{L_u,\rho}(T^*M\setminus 0)\quad\text{uniformly in }
t,\quad
0\leq t\leq \rho\log(1/h).
\end{equation}
Let $\Psi^{\comp}_{h,L,\rho}(T^*M\setminus 0)$, $L\in \{L_u,L_s\}$, be the classes 
of pseudodifferential operators with symbols in $S^{\comp}_{L,\rho}$
defined following the same construction as in~\cite[\S3]{hgap}.
They satisfy similar properties to the operators used in~\cite{hgap},
in particular they are pseudolocal and bounded on $L^2(M)$
uniformly in $h$. However, the $\mathcal O(h^{1-\rho})$ remainders
have to be replaced by $\mathcal O(h^{1-\rho-})$ because of the relaxed
assumptions on derivatives~\eqref{e:symbol-derby}.
We denote by
$$
\Op_h^L:a\in S^{\comp}_{L,\rho}(T^*M\setminus 0)\ \mapsto\ \Op_h^L(a)\in \Psi^{\comp}_{h,L,\rho}(T^*M\setminus 0)
$$
a (non-canonical) quantization procedure. See~\S\ref{s:appendix-general} for more details.

The $\Psi^{\comp}_{h,L,\rho}$ calculus satisfies a version of Egorov's Theorem,
Proposition~\ref{l:egorov-long}. It states that for $A=\Op_h(a)$
where $a\in C_0^\infty(\{1/4<|\xi|_g<4\})$ is independent of~$h$,
\begin{align}
  \label{e:long-egorov-1}
A(t)&=\Op_h^{L_s}(a\circ\varphi_t)+\mathcal O(h^{1-\rho-})_{L^2\to L^2},\\
  \label{e:long-egorov-2}
A(-t)&=\Op_h^{L_u}(a\circ\varphi_{-t})+\mathcal O(h^{1-\rho-})_{L^2\to L^2}
\end{align}
uniformly in $t\in [0,\rho\log(1/h)]$. 

\section{Proof of Theorem~\ref{t:estimate}}
  \label{s:proof}

In this section we give the proof of Theorem~\ref{t:estimate}.
It uses two key estimates, Proposition~\ref{l:nontrapped-estimate}
and Proposition~\ref{l:ultimate-fup}, which
are proved in~\S\ref{s:nontrapped} and~\S\ref{s:fup} respectively.

\subsection{Partitions and words}
  \label{s:partition}

We assume that $a\in C_0^\infty(T^*M)$ and $a|_{S^*M}\not\equiv 0$
as in the assumptions of Theorem~\ref{t:estimate}. Fix conic open sets
$$
\mathcal U_1,\mathcal U_2\subset T^*M\setminus 0,\quad
\mathcal U_1,\mathcal U_2\neq \emptyset,\quad
\overline{\mathcal U_1}\cap\overline{\mathcal U_2}=\emptyset,\quad
\overline{\mathcal U_2}\cap S^*M\subset \{a\neq 0\}.
$$
(The sets $\mathcal U_j$ and the conditions~\eqref{e:A-12-2} below are used
in the proof of Proposition~\ref{l:ultimate-fup}.)

We introduce a pseudodifferential partition of unity
$$
I=A_0+A_1+A_2,\quad
A_0\in\Psi^0_h(M),\quad
A_1,A_2\in\Psi^{\comp}_h(M)
$$
such that (see Figure~\ref{f:partition}):
\begin{itemize}
\item $A_0$ is microlocalized away from the cosphere bundle $S^*M$.
More specifically, we put $A_0:=\psi_0(-h^2\Delta)$ where
$\psi_0\in C^\infty(\mathbb R;[0,1])$ satisfies
$$
\supp\psi_0\cap [1/4,4]=\emptyset,\quad
\supp(1-\psi_0)\subset (1/16, 16).
$$
This implies that
$$
\WFh(A_0)\cap \{1/2\leq |\xi|_g\leq 2\}=\emptyset,\quad
\WFh(I-A_0)\subset \{1/4<|\xi|_g<4\}.
$$
\item $A_1,A_2$ are microlocalized in an energy shell and away from $\mathcal U_1,\mathcal U_2$, that is
\begin{gather}
  \label{e:A-12-1}
\WFh(A_1)\cup\WFh(A_2)\subset \{1/4<|\xi|_g <4\},\\
  \label{e:A-12-2}
\WFh(A_1)\cap \overline{\mathcal U_1}=\WFh(A_2)\cap\overline{\mathcal U_2}=\emptyset.
\end{gather}
\item $A_1$ is controlled by $a$ on the cosphere bundle,
that is
\begin{equation}
  \label{e:A-1-a}
\WFh(A_1)\cap S^*M\subset \{a\neq 0\}.
\end{equation}
\end{itemize}
To construct $A_1,A_2$, note that~\eqref{e:A-12-1}--\eqref{e:A-1-a}
are equivalent to $\WFh(A_j)\subset \Omega_j$ where
$$
\begin{aligned}
\Omega_1&:=\big(\{1/4<|\xi|_g<4\}\setminus\overline{\mathcal U_1}\big)
\cap\big(
\{a\neq 0\}\cup (T^*M\setminus S^*M)
\big),\\
\Omega_2&:=\{1/4<|\xi|_g<4\}\setminus\overline{\mathcal U_2}
\end{aligned}
$$
are open subsets of $T^*M$ such that
$\WFh(I-A_0)\subset \{1/4<|\xi|_g<4\}\subset \Omega_1\cup\Omega_2$.
It remains to use a pseudodifferential partition of unity
to find $A_1,A_2$ such that~\eqref{e:A-12-1}--\eqref{e:A-1-a} hold
and $A_1+A_2=I-A_0$.
(For instance, one can write $I-A_0=\Op_h(b)+\mathcal O(h^\infty)$
where $\supp b\subset \Omega_1\cup\Omega_2$, split $b=a_1+a_2$ for some
symbols $a_1,a_2$ with $\supp a_j\subset\Omega_j$, and put $A_j:=\Op_h(a_j)$.)
We moreover choose $A_1,A_2$ so that
\begin{figure}
\qquad\includegraphics{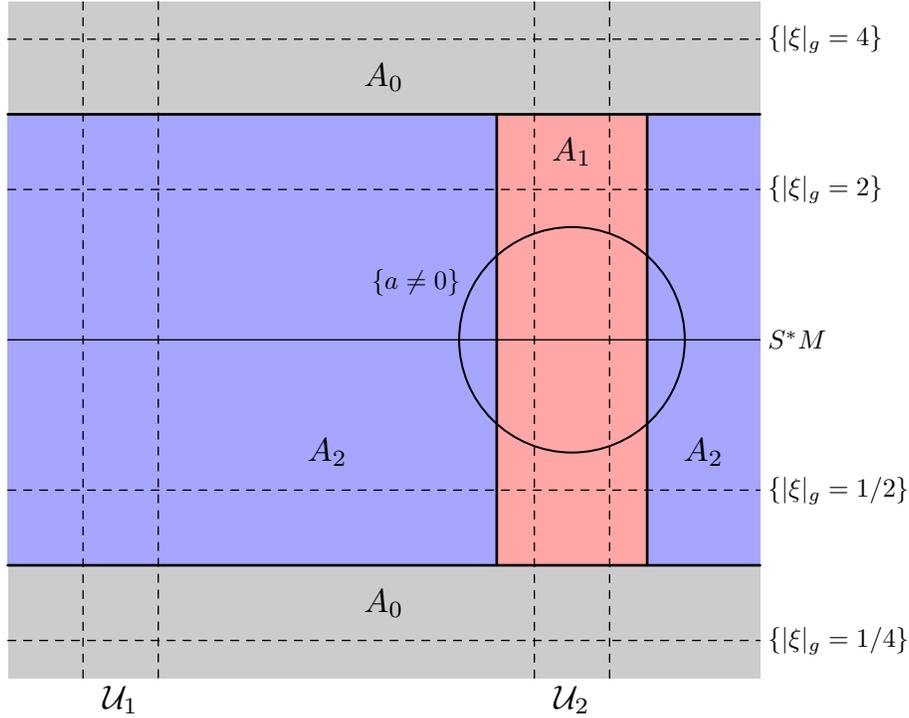}
\caption{The sets $\mathcal U_1,\mathcal U_2$,
$\WFh(A_j)$ (shaded),
and $\{a\neq 0\}$ inside $T^*M$. The vertical
direction corresponds to dilating $\xi$.}
\label{f:partition}
\end{figure}
\begin{equation}
  \label{e:A-symbol}
0\leq a_\ell\leq 1\quad\text{where }
a_\ell:=\sigma_h(A_\ell),\quad
\ell=0,1,2.
\end{equation}
We next dynamically refine the partition $A_j$. For each $n\in\mathbb N_0$,
define the set of words of length $n$,
$$
\mathcal W(n):=\{1,2\}^n=\big\{\mathbf w=w_0\dots w_{n-1}\mid
w_0,\dots,w_{n-1}\in \{1,2\}\big\}.
$$
For each word $\mathbf w=w_0\dots w_{n-1}\in \mathcal W(n)$, using the notation~\eqref{e:A-t} define the operator
\begin{equation}
	\label{e:op-word}
A_{\mathbf w}=A_{w_{n-1}}(n-1) A_{w_{n-2}}(n-2)\cdots A_{w_1}(1) A_{w_0}(0).
\end{equation}
If $n$ is bounded independently of $h$, then by Egorov's Theorem~\eqref{e:basic-egorov}
we have $A_{\mathbf w}\in \Psi^{\comp}_h(M)$ and
$\sigma_h(A_{\mathbf w})=a_{\mathbf w}$ where
\begin{equation}
	\label{e:symbol-word}
a_{\mathbf w}=\prod_{j=0}^{n-1} \big(a_{w_j}\circ\varphi_j\big).
\end{equation}
For a subset $\mathcal E\subset \mathcal W(n)$, define
the operator $A_{\mathcal E}$ and the symbol $a_{\mathcal E}$ by
\begin{equation}
  \label{e:A-subset}
A_{\mathcal E}:=\sum_{\mathbf w\in \mathcal E}A_{\mathbf w},\quad
a_{\mathcal E}:=\sum_{\mathbf w\in \mathcal E}a_{\mathbf w}.
\end{equation}
Since $A_1+A_2=I-A_0$ and $P$ are both functions of $\Delta$,
they commute with each other.
Therefore, $A_1+A_2$ commutes with $U(t)$ which implies
\begin{equation}
  \label{e:A-total}
A_{\mathcal W(n)}=(A_1+A_2)^n.
\end{equation}
This operator is equal to the identity microlocally near $S^*M$,
implying
\begin{lemm}
  \label{l:A-0}
We have for all $n\geq 0$ and $u\in H^2(M)$,
\begin{equation}
  \label{e:A-0}
\|u-(A_1+A_2)^n u\|_{L^2}\leq C\|(-h^2\Delta-I)u\|_{L^2}.
\end{equation}
\end{lemm}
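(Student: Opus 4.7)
The plan is to exploit the fact that $A_0, A_1, A_2$, and $-h^2\Delta$ all commute, since $A_0 = \psi_0(-h^2\Delta)$ and $A_1+A_2 = I - A_0$ are by construction functions of $-h^2\Delta$. This reduces the entire argument to functional calculus on $\mathbb R$, so I will never need to use the pseudodifferential structure of the $A_j$'s directly.

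First I would use the algebraic identity $I - B^n = (I-B)\sum_{k=0}^{n-1} B^k$ with $B = I - A_0 = A_1 + A_2$ to write
\begin{equation*}
  u - (A_1+A_2)^n u \;=\; A_0 \sum_{k=0}^{n-1} (I-A_0)^k u.
\end{equation*}
Since $\supp \psi_0 \cap [1/4,4]=\emptyset$, the function $\psi_0$ vanishes in a neighborhood of $\lambda=1$, so
\begin{equation*}
  \widetilde\psi(\lambda) := \frac{\psi_0(\lambda)}{\lambda-1}
\end{equation*}
is smooth and uniformly bounded on $\mathbb R$ by some constant $C_0$ depending only on $\psi_0$. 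Then $A_0 = \widetilde\psi(-h^2\Delta)\,(-h^2\Delta - I)$ by functional calculus, and because everything commutes I can push the factor $(-h^2\Delta-I)$ all the way to the right:
\begin{equation*}
  u - (A_1+A_2)^n u \;=\; \widetilde\psi(-h^2\Delta) \,\Bigl(\sum_{k=0}^{n-1}(I-A_0)^k\Bigr)\,(-h^2\Delta - I)u.
\end{equation*}

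Now I bound each factor using the spectral theorem for the self-adjoint operator $-h^2\Delta$. Since $0 \le \psi_0 \le 1$, the spectrum of $I - A_0$ lies in $[0,1]$, so $\|(I-A_0)^k\|_{L^2\to L^2}\le 1$ for each $k$, and $\|\widetilde\psi(-h^2\Delta)\|_{L^2\to L^2}\le C_0$. Summing over $0\le k\le n-1$ with $n\le 10\log(1/h)$ yields
\begin{equation*}
  \|u-(A_1+A_2)^n u\|_{L^2}\le C_0\, n\, \|(-h^2\Delta - I)u\|_{L^2} \le C\log(1/h)\,\|(-h^2\Delta-I)u\|_{L^2},
\end{equation*}
as desired. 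There is no real obstacle here: the one thing to verify carefully is that $\widetilde\psi$ is globally smooth and bounded on $\mathbb R$, which is exactly why the partition was set up with $\psi_0$ vanishing on the energy window $[1/4,4]$ rather than only at $\lambda=1$.
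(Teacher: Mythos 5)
Your proof is correct and is essentially the paper's argument: the same expansion $I-(A_1+A_2)^n=\sum_{k=0}^{n-1}(I-A_0)^kA_0$, the same factorization $A_0=\psi_1(-h^2\Delta)(-h^2\Delta-I)$ with $\psi_1(\lambda)=\psi_0(\lambda)/(\lambda-1)$ bounded because $\psi_0$ vanishes on $[1/4,4]$, and the same norm bounds $\|I-A_0\|_{L^2\to L^2}\leq 1$ summed over $n\leq 10\log(1/h)$ terms.
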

\begin{proof}
Since $A_1+A_2=I-A_0=I-\psi_0(-h^2\Delta)$ we have
$$
u-(A_1+A_2)^n u= \psi_1(-h^2\Delta) (-h^2\Delta-I)u,\quad
\psi_1(\lambda):={1-(1-\psi_0(\lambda))^n\over \lambda-1}.
$$
Since $1\notin\supp\psi_0$ we have $\sup_{\lambda\in\mathbb R}|\psi_1(\lambda)|\leq C$
for some constant $C$ independent of $n$, and~\eqref{e:A-0} follows.
\end{proof}

\subsection{Long words and key estimates}

Take $\rho\in (0,1)$ very close to 1, to be chosen later (in Proposition~\ref{l:ultimate-fup}),
and put
$$
N_0:=\Big\lceil{\rho\over 4}\log(1/h)\Big\rceil\in\mathbb N,\quad
N_1:=4N_0\approx \rho\log(1/h).
$$
Then words of length $N_0$ and $N_1$ give rise to pseudodifferential operators
in the calculus $\Psi^{\comp}_{h,L,\rho}$ discussed in~\S\ref{s:fancy-calculus}:
\begin{lemm}
  \label{l:long-word-egorov}
For each $\mathbf w\in \mathcal W(N_0)$ we have (with bounds independent of $\mathbf w$)
\begin{equation}
  \label{e:lwe-1}
a_{\mathbf w}\in S^{\comp}_{L_s,\rho/4}(T^*M\setminus 0),\quad
A_{\mathbf w}=\Op_h^{L_s}(a_{\mathbf w})+\mathcal O(h^{3/4})_{L^2\to L^2}.
\end{equation}
If instead $\mathbf w\in \mathcal W(N_1)$, then
\begin{equation}
  \label{e:lwe-2}
a_{\mathbf w}\in S^{\comp}_{L_s,\rho}(T^*M\setminus 0),\quad
A_{\mathbf w}=\Op_h^{L_s}(a_{\mathbf w})+\mathcal O(h^{1-\rho-})_{L^2\to L^2}.
\end{equation}
\end{lemm}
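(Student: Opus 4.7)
The plan is to argue by induction on the word length, replacing each factor $A_{w_j}(j)$ with its Egorov approximant $\Op_h^{L_s}(a_{w_j}\circ\varphi_j)$ and then collapsing successive quantizations via a composition formula in the anisotropic calculus $\Psi^{\comp}_{h,L_s,\rho}$. The key quantitative fact is that the word lengths $N_0$ and $N_1$ are both bounded by $\rho\log(1/h)$, so~\eqref{e:derbound-long-stable} and~\eqref{e:long-egorov-1} apply uniformly to every individual factor.

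For the symbol claim, each $a_{w_j}\circ\varphi_j$ lies in $S^{\comp}_{L_s,\rho}$ (and in $S^{\comp}_{L_s,\rho/4}$ when $j\leq N_0$) by~\eqref{e:derbound-long-stable}: the commutation relations~\eqref{e:comm-rel-0}--\eqref{e:comm-rel} ensure that $H_p$ and $U_+$ cost nothing under pullback by $\varphi_j$, $D$ commutes outright, and $U_-$ picks up an $e^{j}$. To pass to the product $a_{\mathbf w}=\prod_{j<N}(a_{w_j}\circ\varphi_j)$, I apply Leibniz to an arbitrary monomial $H_p^k U_+^\ell U_-^m D^n a_{\mathbf w}$: each of the at most $N^{k+\ell+m+n}$ resulting terms has its $U_-$ derivatives distributed among the $N$ single factors, contributing at worst $e^{mN}$. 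With $N\leq \rho\log(1/h)$ (resp.\ $N\leq (\rho/4)\log(1/h)$) this yields the required $\mathcal O(h^{-\rho m-})$ (resp.\ $\mathcal O(h^{-(\rho/4)m-})$) bound; the polylog multinomial prefactor is absorbed into the $h^{-\varepsilon}$ slack built into $S^{\comp}_{L_s,\rho}$, and the $D^n$ direction is essentially free since $D$ commutes with $\varphi_t$ and with $U_\pm$.

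For the operator estimate I set $B_j:=A_{w_j}(j)\cdots A_{w_0}(0)$ and $\widetilde B_j:=\Op_h^{L_s}\bigl(\prod_{i\leq j}a_{w_i}\circ\varphi_i\bigr)$. Egorov~\eqref{e:long-egorov-1} gives $A_{w_j}(j)=\Op_h^{L_s}(a_{w_j}\circ\varphi_j)+\mathcal O(h^{1-\rho-})_{L^2\to L^2}$ uniformly in $j\leq\rho\log(1/h)$, while the composition formula for symbols regular along the same Lagrangian $L_s$ should give $\Op_h^{L_s}(b)\Op_h^{L_s}(c)=\Op_h^{L_s}(bc)+\mathcal O(h^{1-\rho-})$. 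Splitting
\[
B_j-\widetilde B_j=\bigl(A_{w_j}(j)-\Op_h^{L_s}(a_{w_j}\circ\varphi_j)\bigr)B_{j-1}+\Op_h^{L_s}(a_{w_j}\circ\varphi_j)\bigl(B_{j-1}-\widetilde B_{j-1}\bigr)+\mathcal O(h^{1-\rho-})
\]
and using that $\|B_{j-1}\|_{L^2\to L^2}=\mathcal O(1)$ (since $0\leq a_\ell\leq 1$ by~\eqref{e:A-symbol} forces $\|A_\ell\|\leq 1+Ch$ via~\eqref{e:basic-norm-bound}, hence $\|B_{j-1}\|\leq (1+Ch)^j=1+\mathcal O(jh)$ for $j\leq\rho\log(1/h)$), the error accumulates only linearly in $j$. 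Iterating $N_1$ steps yields a total error $N_1\cdot h^{1-\rho-}=\mathcal O(h^{1-\rho-})$, proving~\eqref{e:lwe-2}. Running the same argument in the $S^{\comp}_{L_s,\rho/4}$ calculus gives per-step error $h^{1-\rho/4-}$ and total error $N_0\cdot h^{1-\rho/4-}=\mathcal O(h^{3/4})$, since $1-\rho/4>3/4$ whenever $\rho<1$, establishing~\eqref{e:lwe-1}.

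The main obstacle is the improved composition estimate in $\Psi^{\comp}_{h,L_s,\rho}$: a naive second-microlocal stationary-phase expansion would produce a remainder of size $\mathcal O(h^{1-2\rho-})$, which is useless once $\rho$ is close to $1$. Getting $\mathcal O(h^{1-\rho-})$ relies crucially on both factors being regular along the \emph{same} Lagrangian foliation $L_s$, so that only one of the two transverse directions $(U_-,D)$ appears in the non-trivial part of the expansion. This is presumably the content of the composition lemma stated in the Appendix; with it in hand, all remaining steps reduce to routine Leibniz and operator-norm bookkeeping.
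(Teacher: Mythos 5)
Your proposal is correct and follows essentially the same route as the paper: factorwise membership in the anisotropic class via~\eqref{e:derbound-long-stable}, the long-time Egorov statement~\eqref{e:long-egorov-1} for each factor, and then a product/telescoping argument — your Leibniz count and inductive splitting are precisely the content of Lemmas~\ref{l:long-product-symbols} and~\ref{l:long-product-operators} in the Appendix, and the same-foliation composition remainder $\mathcal O(h^{1-\rho-})$ you defer to is exactly~\eqref{e:genius-4}. The one detail needed for your ``linear accumulation'' step is the refined bound $\|\Op_h^{L_s}(a_{w_j}\circ\varphi_j)\|_{L^2\to L^2}\leq 1+Ch^{1-\rho-}$ (Lemma~\ref{l:precise-norm}), not merely uniform boundedness, which is how the paper closes the same recursion.
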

\begin{proof}
We prove~\eqref{e:lwe-2}; the proof of~\eqref{e:lwe-1} is identical, replacing
$\rho$ by $\rho/4$.
First of all, by~\eqref{e:derbound-long-stable} and~\eqref{e:A-symbol} we have uniformly in $j=0,\dots,N_1-1$
\begin{equation}
  \label{e:lweint-1}
a_{w_j}\circ\varphi_j\in S^{\comp}_{L_s,\rho}(T^*M\setminus 0),\quad
\sup|a_{w_j}\circ\varphi_j|\leq 1.
\end{equation}
Recalling the definition~\eqref{e:symbol-word}, we have $a_{\mathbf w}\in S^{\comp}_{L_s,\rho}(T^*M\setminus 0)$
by Lemma~\ref{l:long-product-symbols},
where we put $a_j:=a_{w_j}\circ\varphi_j$.
Here we use the relation~\eqref{e:funny-epsilons}
of the classes $S^{\comp}_{L_s,\rho,\rho'}$ used in the Appendix
to the class $S^{\comp}_{L_s,\rho}$ used here.
Next, by Lemma~\ref{l:egorov-long} we have
uniformly in $j=0,\dots,N_1-1$
\begin{equation}
  \label{e:lweint-2}
A_{w_j}(j)=\Op_h^{L_s}(a_{w_j}\circ\varphi_j)+\mathcal O(h^{1-\rho-})_{L^2\to L^2}.
\end{equation}
Applying Lemma~\ref{l:long-product-operators} with $A_j:=A_{w_j}(j)$, we get
$A_{\mathbf w}=\Op_h^{L_s}(a_{\mathbf w})+\mathcal O(h^{1-\rho-})_{L^2\to L^2}$.
\end{proof}
Now, define the density function
\begin{equation}
	\label{e:fn-density}
F:\mathcal W(N_0)\to\mathbb [0,1],\quad
F(w_0\dots w_{N_0-1})={\#\{j\in \{0,\dots,N_0-1\}\mid w_j=1\}\over N_0}.
\end{equation}
Fix small $\alpha\in (0,1)$ to be chosen later (in~\eqref{e:the-alpha})
and define
\begin{equation}
	\label{e:Z-control-set}
\mathcal Z:=\{F\geq \alpha\}\subset\mathcal W(N_0).
\end{equation}
We call words $\mathbf w\in\mathcal Z$ \emph{controlled} because for
each $(x,\xi)\in\supp a_{\mathbf w}$,
at least $\alpha N_0$ of the points $\varphi_0(x,\xi),\varphi_1(x,\xi),\dots,\varphi_{N_0-1}(x,\xi)$
lie in $\supp a_1$ and due to~\eqref{e:A-1-a} are controlled by $a$.

We chose $N_0$ short enough so that the operators $A_{\mathbf w}$, $\mathbf w\in \mathcal W(N_0)$
are pseudodifferential and Egorov's Theorem~\eqref{e:lwe-1} holds with remainder
$\mathcal O(h^{3/4})$. This will be convenient for the estimates in~\S\ref{s:nontrapped}
below, in particular in Lemma~\ref{l:small-time-good} (explaining why we did not replace
$N_0$ with $N_1$).
However, to apply the fractal uncertainty principle
(Proposition~\ref{l:ultimate-fup}), we need to propagate for time $2N_1=8N_0\approx 2\rho\log(1/h)$.
To bridge the resulting gap, we define the set of controlled words $\mathcal Y\subset \mathcal W(2N_1)$
by iterating~$\mathcal Z$.
More specifically, writing words in $\mathcal W(2N_1)$
as concatenations $\mathbf w^{(1)}\dots \mathbf w^{(8)}$
where $\mathbf w^{(1)},\dots,\mathbf w^{(8)}\in \mathcal W(N_0)$,
define the partition
\begin{equation}
  \label{e:XY-def}
\begin{aligned}
\mathcal W(2N_1)&=\mathcal X\sqcup\mathcal Y,\\
\mathcal X&:=\{\mathbf w^{(1)}\dots \mathbf w^{(8)}\mid \mathbf w^{(\ell)}\notin\mathcal Z\quad\text{for all }\ell\},\\
\mathcal Y&:=\{\mathbf w^{(1)}\dots \mathbf w^{(8)}\mid \text{there exists $\ell$ such that }\mathbf w^{(\ell)}\in\mathcal Z\}
\end{aligned}
\end{equation}
In our argument the parameter $\alpha$ will be taken small so that
$\mathcal X$ has few elements. The size of $\mathcal X$ is estimated by
the following statement (which is not sharp but provides a bound sufficient
for us)
\begin{lemm}
  \label{l:X-count}
The number of elements in $\mathcal X$ is bounded by (here $C$ may depend on $\alpha$)
\begin{equation}
  \label{e:X-count}
\#(\mathcal X)\leq Ch^{-4\sqrt{\alpha}}.
\end{equation}
\end{lemm}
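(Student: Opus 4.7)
The plan is to reduce this to a classical binomial count. By~\eqref{e:XY-def}, every element of $\mathcal X$ arises as a concatenation $\mathbf w^{(1)}\dots\mathbf w^{(8)}$ of 8 independent blocks from $\mathcal W(N_0)\setminus\mathcal Z$, so
$$
\#\mathcal X = M^8, \qquad M := \#\bigl(\mathcal W(N_0)\setminus\mathcal Z\bigr).
$$
Recalling $\mathcal Z = \{F \geq \alpha\}$ with $F$ defined in~\eqref{e:fn-density}, $M$ is precisely the number of binary words of length $N_0$ containing strictly fewer than $\alpha N_0$ ones, hence
$$
M = \sum_{k < \alpha N_0}\binom{N_0}{k}.
$$

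Assume first $\alpha < 1/2$, so the largest binomial coefficient in the sum is the last one. I would apply the standard estimate $\binom{N_0}{k} \leq (eN_0/k)^k$ to get $M \leq C N_0 (e/\alpha)^{\alpha N_0}$, then insert $N_0 \leq \tfrac{\rho}{4}\log(1/h)+1$ to arrive at
$$
\#\mathcal X = M^8 \leq C_\alpha\, h^{-2\alpha\rho(1+\log(1/\alpha))}.
$$
To reach the announced exponent $4\sqrt{\alpha}$, I would invoke the elementary observation that $\alpha \mapsto \sqrt{\alpha}\bigl(1+\log(1/\alpha)\bigr)$ is bounded by $2/\sqrt{e}$ on $(0,1]$ (the maximum occurring at $\alpha = 1/e$). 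Combined with $\rho < 1$ this yields
$$
2\alpha\rho\bigl(1+\log(1/\alpha)\bigr) = 2\sqrt{\alpha}\cdot \rho\sqrt{\alpha}\bigl(1+\log(1/\alpha)\bigr) \leq \tfrac{4\rho}{\sqrt{e}}\sqrt{\alpha} < 4\sqrt{\alpha},
$$
establishing~\eqref{e:X-count} in this regime. The remaining range $\alpha \in [1/2,1)$ is covered by the crude bound $\#\mathcal X \leq 2^{8N_0} \leq C h^{-2\rho\log 2}$, which is dominated by $C h^{-4\sqrt{\alpha}}$ since $4\sqrt{\alpha} \geq 2\sqrt{2} > 2\log 2 > 2\rho\log 2$.

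The argument is entirely combinatorial and presents no substantive obstacle. The only minor delicacy is calibrating the square-root exponent; the specific form $4\sqrt{\alpha}$ is convenient for the later application but is not sharp, and any function slightly dominating $2\alpha\rho(1+\log(1/\alpha))$ for all $\alpha \in (0,1)$ would work equally well.
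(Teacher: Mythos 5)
Your proof is correct and follows essentially the same route as the paper: both reduce to $\#\mathcal X=\#(\mathcal W(N_0)\setminus\mathcal Z)^8$, bound that cardinality by a binomial count of words with fewer than $\alpha N_0$ ones, and calibrate the resulting exponent against $4\sqrt{\alpha}$ via an elementary inequality (the paper uses Stirling's entropy bound, you use $\binom{N_0}{k}\leq(eN_0/k)^k$ together with $\sqrt{\alpha}(1+\log(1/\alpha))\leq 2/\sqrt{e}$). One cosmetic point: your intermediate bound silently drops the factor $N_0^8=\mathcal O((\log(1/h))^8)$, but since your exponent estimate ${4\rho\over\sqrt e}\sqrt{\alpha}<4\sqrt{\alpha}$ has strict slack, this factor is absorbed into $Ch^{-4\sqrt{\alpha}}$ and the conclusion stands.
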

\begin{proof}
The complement $\mathcal W(N_0)\setminus\mathcal Z$ consists of words
$\mathbf w=w_0\dots w_{N_0-1}$, $w_j\in \{1,2\}$,
such that the set $S_{\mathbf w}=\{j\mid w_j=1\}$ has no more than $\lfloor\alpha N_0\rfloor$ elements.
We add arbitrary elements to the set $S_{\mathbf w}$ to ensure it has size exactly $\lfloor \alpha N_0\rfloor$.
Each choice of~$S_{\mathbf w}$ corresponds to at most
$2^{\alpha N_0}\leq h^{-\alpha/4}$ words $\mathbf w$,
and by Stirling's formula
$$
\#\{S_{\mathbf w}\mid \mathbf w\in \mathcal W(N_0)\setminus \mathcal Z\}
\leq\binom{N_0}{\lfloor \alpha N_0\rfloor}
\leq C\exp\big(-(\alpha\log\alpha+(1-\alpha)\log(1-\alpha))N_0\big).
$$
Since $-(\alpha\log\alpha+(1-\alpha)\log(1-\alpha))\leq \sqrt{\alpha}$
for $0\leq\alpha\leq 1$ we have
$$
\#(\mathcal W(N_0)\setminus\mathcal Z)\leq Ch^{-\alpha/4-\sqrt{\alpha}/4}\leq Ch^{-\sqrt{\alpha}/2}.
$$
Since $\#(\mathcal X)=\#(\mathcal W(N_0)\setminus\mathcal Z)^8$,
we obtain~\eqref{e:X-count}.
\end{proof}
Now we state the two key estimates used in the proof. The first one, proved
in~\S\ref{s:nontrapped}, estimates
the mass of an approximate eigenfunction on the controlled region $\mathcal Y$:
\begin{prop}
\label{l:nontrapped-estimate}
We have for all $u\in H^2(M)$, with $A_{\mathcal Y}$ defined by~\eqref{e:A-subset}
\begin{equation}
  \label{e:nontrapped-estimate}
\|A_{\mathcal Y}u\|_{L^2}\leq {C\over\alpha}\|\Op_h(a)u\|_{L^2}+{C\log(1/h)\over \alpha h}\|(-h^2\Delta-I)u\|_{L^2}
+\mathcal O(h^{1/8})\|u\|_{L^2}
\end{equation}
where the constant $C$ does not depend on $\alpha$.
\end{prop}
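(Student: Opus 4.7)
The plan is to split the argument into a short-time Chebyshev bound for $A_{\mathcal Z}$ and a submultiplicative extension to the length-$2N_1 = 8N_0$ operator $A_{\mathcal Y}$.

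For the short-time step I would prove that for all $v\in H^2(M)$,
\begin{equation}\label{e:proposal-short}
\|A_{\mathcal Z} v\|_{L^2}\leq \tfrac{C}{\alpha}\|\Op_h(a) v\|_{L^2} + \tfrac{C\log(1/h)}{\alpha h}\|(-h^2\Delta - I) v\|_{L^2} + \mathcal O(h^{\delta})\|v\|_{L^2}
\end{equation}
for some $\delta > 1/8$. The starting point is the Chebyshev bound $\mathbf{1}_{\{F\geq\alpha\}}\leq F/\alpha$ combined with the partition-of-unity inequality $a_1 + a_2\leq 1$, giving the pointwise symbol bound $a_{\mathcal Z}\leq \alpha^{-1}\overline{a_1}$ with $\overline{a_1}:=N_0^{-1}\sum_{j=0}^{N_0-1}a_1\circ\varphi_j$. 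Lemma~\ref{l:long-word-egorov} gives $A_{\mathcal Z} = \Op_h^{L_s}(a_{\mathcal Z}) + \mathcal O(h^{3/4})_{L^2\to L^2}$ in the calculus $\Psi^{\comp}_{h,L_s,\rho/4}$. Squaring and applying composition together with sharp G\aa{}rding in the exotic calculus (using $\alpha^{-2}\overline{a_1}^2 - a_{\mathcal Z}^2\geq 0$) yields
$$\|A_{\mathcal Z}v\|_{L^2}^2 \leq \tfrac{1}{\alpha^2}\|\Op_h^{L_s}(\overline{a_1})v\|_{L^2}^2 + \mathcal O(h^{\delta'})\|v\|_{L^2}^2.$$
Egorov (Lemma~\ref{l:egorov-long}) identifies $\Op_h^{L_s}(\overline{a_1})v$ with $N_0^{-1}\sum_{j=0}^{N_0-1}A_1(j)v$ modulo $\mathcal O(h^{3/4-})\|v\|$, and the quasimode identity $\|U(j)v - e^{-ij/h}v\|\leq C(j/h)\|(-h^2\Delta-I)v\|$ for $|j|\leq N_0$ lets me replace each $U(j)v$ by $v$ at cost $\mathcal O(N_0/h)\|(-h^2\Delta-I)v\|$. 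Finally, the containment $\WFh(A_1)\cap S^*M\subset \{a\neq 0\}$ together with $\|A_0 v\|_{L^2}\leq 2\|(-h^2\Delta-I)v\|_{L^2}$ from the proof of Lemma~\ref{l:A-0} yields $\|A_1 v\|\leq C\|\Op_h(a)v\| + C\|(-h^2\Delta-I)v\| + \mathcal O(h)\|v\|$, completing \eqref{e:proposal-short}.

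For the full-length step, I partition $\mathcal Y = \bigsqcup_{\ell=1}^{8}\mathcal Y_\ell^{\mathrm{fst}}$ by the index of the first controlled block. Since $A_1+A_2 = I - A_0$ is a function of $\Delta$ and commutes with $U(t)$, the sum $\sum_{\mathbf v\in\mathcal W(N_0)}B_k(\mathbf v)$ equals the time-independent operator $(I-A_0)^{N_0}$ for every $k$. This yields the factorization
$$A_{\mathcal Y_\ell^{\mathrm{fst}}} = (I-A_0)^{(8-\ell)N_0}\cdot U(-(\ell-1)N_0) A_{\mathcal Z} U((\ell-1)N_0)\cdot\Pi_\ell,$$
where $\Pi_\ell$ is a product of $\ell - 1\leq 7$ conjugates of $A_{\mathcal W(N_0)\setminus\mathcal Z}$. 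The leftmost factor has norm $\leq 1$ from $\|I-A_0\|_{L^2\to L^2}\leq 1$, and each factor in $\Pi_\ell$ has norm $\leq 1 + \mathcal O(h^{\delta'})$ by sharp G\aa{}rding applied to $0\leq a_{\mathcal W(N_0)\setminus\mathcal Z}\leq 1$. Setting $w_\ell := U((\ell-1)N_0)\Pi_\ell u$, I apply \eqref{e:proposal-short} to $v = w_\ell$ and transfer the bounds back to $u$ using Lemma~\ref{l:egorov-long} (to commute $\Op_h(a)$ past $U(\pm(\ell-1)N_0)$, requiring shifts $\leq \rho\log(1/h) = 4N_0$, which I arrange by using a left-decomposition for $\ell\leq 5$ and a mirror right-decomposition for $\ell\geq 5$), the quasimode identity for the $U(t)u$ shifts, and the approximate commutativity of $(-h^2\Delta - I)$ with $\Pi_\ell$ supplied by the $L_s$-calculus. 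Summing over the $8$ values of $\ell$ absorbs the constant and the accumulated $\mathcal O(h^{\delta'})$ errors into $\mathcal O(h^{1/8})\|u\|$, giving \eqref{e:nontrapped-estimate}.

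The hard part I expect is the transfer from $w_\ell$-bounds back to $u$-bounds without degrading the announced $\log(1/h)/(\alpha h)$ prefactor. The Egorov identity produces the shifted symbol $a\circ\varphi_{\pm(\ell-1)N_0}$ rather than $a$, so turning $\|\Op_h^{L_s}(a\circ\varphi_{\pm(\ell-1)N_0})\Pi_\ell u\|$ into a bound involving $\|\Op_h(a)u\|$ uses the conic, flow-invariant structure of the partition $A_0,A_1,A_2$; and turning $\|(-h^2\Delta-I)\Pi_\ell u\|$ into $\|(-h^2\Delta-I)u\|$ requires that $[(-h^2\Delta-I), \Pi_\ell] u$ is dominated by the microlocalization of $u$ near $S^*M$ (itself controlled by $\|(-h^2\Delta-I)u\|$ via $A_0$), so that the errors remain additive rather than degrading the prefactor.
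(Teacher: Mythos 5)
Your short-time step is essentially the paper's Lemma~\ref{l:Y-control-1}: the Chebyshev comparison $\alpha\mathbf 1_{\mathcal Z}\leq F$, sharp G\r arding in the exotic $L_s$-calculus, Egorov to identify the averaged symbol with $N_0^{-1}\sum_j A_1(j)$, the quasimode identity to remove $U(j)$, and the elliptic factorization of $a_1$ through $\{a\neq0\}$ and $p^2-1$. That part is fine (note you only get $\mathcal O(h^{1/8})$, not $h^\delta$ with $\delta>1/8$, from the G\r arding step applied to squared norms, but that is all the proposition needs).

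The genuine gap is in your extension step, and it comes from partitioning $\mathcal Y$ by the \emph{first} controlled block. With that choice the uncontrolled factors $\Pi_\ell=A_{\mathcal Q}((\ell-2)N_0)\cdots A_{\mathcal Q}(0)$ sit \emph{between} $A_{\mathcal Z}((\ell-1)N_0)$ and $u$, so you are forced to apply the short-time bound to $w_\ell=U((\ell-1)N_0)\Pi_\ell u$ and then transfer back to $u$ — precisely the step you flag as hard, and it does not go through with the available tools. For the quasimode term: you need $\tfrac{\log(1/h)}{h}\|(-h^2\Delta-I)\Pi_\ell u\|\lesssim \tfrac{\log(1/h)}{h}\|(-h^2\Delta-I)u\|+\mathcal O(h^{1/8})\|u\|$, hence a bound on $\tfrac{\log(1/h)}{h}\|[(-h^2\Delta-I),\Pi_\ell]u\|$. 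But $A_{\mathcal Q}$ is only equal to $\Op_h^{L_s}(a_{\mathcal Q})$ modulo an unstructured $\mathcal O(h^{1/2})$ remainder (Lemma~\ref{l:small-time-good}), and even the structured part has commutator with $P$ of size $h\sup|H_pa_{\mathcal Q}|=\mathcal O(h^{3/4-})$ (the tangential derivative of $a_{\mathcal Q}$ costs $h^{-1/4-}$ because the sum over $2^{N_0}$ words is estimated termwise); either way the commutator is far larger than the $\mathcal O(h^{9/8}/\log(1/h))$ you would need, and it does not vanish on $S^*M$ (its symbol is $\sim 2hpH_p(\cdots)$), so it cannot be re-absorbed into $\|(-h^2\Delta-I)u\|$ via $A_0$ as you suggest. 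For the control term the situation is no better: Egorov turns $\Op_h(a)$ into $\Op_h^{L_s}(a\circ\varphi_{\pm(\ell-1)N_0})$ acting on $\Pi_\ell u$ (and for $\ell\geq 5$ the shift exceeds the $\rho\log(1/h)$ range of Lemma~\ref{l:egorov-long}; the proposed ``mirror'' decomposition for $\ell\geq5$ is not spelled out and does not combine with the left one into a partition of $\mathcal Y$), and there is no argument that $\|\Op_h^{L_s}(a\circ\varphi_t)\Pi_\ell u\|$ is controlled by $\|\Op_h(a)u\|$ — ``conic, flow-invariant structure'' is not a substitute for one.

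The fix is to index the decomposition by the \emph{last} controlled block, as in~\eqref{e:XY-def}ff.: then $A_{\mathcal Y_\ell}=A_{\mathcal Q}(7N_0)\cdots A_{\mathcal Q}(\ell N_0)\,A_{\mathcal Z}((\ell-1)N_0)\,(A_1+A_2)^{(\ell-1)N_0}$, the $\mathcal Q$-factors are on the left and only need the uniform bound $\|A_{\mathcal Q}\|\leq C$, the arbitrary blocks on the right sum to $(A_1+A_2)^{(\ell-1)N_0}$ whose action on $u$ is $u$ up to $C\log(1/h)\|(-h^2\Delta-I)u\|$ by Lemma~\ref{l:A-0}, and $A_{\mathcal Z}((\ell-1)N_0)$ then hits $u$ directly, so Lemma~\ref{l:propagated} reduces it to $\|A_{\mathcal Z}u\|$ plus $\tfrac{C\log(1/h)}{h}\|(-h^2\Delta-I)u\|$. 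No conjugation of $\Op_h(a)$ by the propagator and no commutation through $\Pi_\ell$ is ever needed; that is exactly how the paper closes the argument.
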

The second estimate, proved in~\S\ref{s:fup} using a fractal uncertainty principle,
is a norm bound on the operator corresponding to every single word of length $2N_1\approx 2\rho\log(1/h)$:
\begin{prop}
\label{l:ultimate-fup}
There exist $\beta>0$, $\rho\in (0,1)$ depending only on $M,\mathcal U_1,\mathcal U_2$ such that
$$
\sup_{\mathbf w\in\mathcal W(2N_1)}
\|A_{\mathbf w}\|_{L^2\to L^2}\leq Ch^{\beta}.
$$
\end{prop}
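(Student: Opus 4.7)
The plan is to split the word $\mathbf w\in\mathcal W(2N_1)$ into two halves of length $N_1$ and, after conjugating by a suitable power of the propagator, represent the two halves as pseudodifferential operators in the anisotropic stable and unstable calculi respectively; then establish porosity of the resulting symbols via Proposition~\ref{l:horocycle-unique} and finally invoke the fractal uncertainty principle for porous sets of~\cite{fullgap}. More concretely, write $\mathbf w=\mathbf w_-\mathbf w_+$ with $\mathbf w_+=w_0\dots w_{N_1-1}$ and $\mathbf w_-=w_{N_1}\dots w_{2N_1-1}$, and use the identity $A_{w_j}(j)=U(-N_1)A_{w_j}(j-N_1)U(N_1)$ to obtain
\begin{equation*}
U(N_1)\,A_{\mathbf w}\,U(-N_1)\;=\;\tilde B\cdot\tilde A_+,
\end{equation*}
where $\tilde B=A_{w_{2N_1-1}}(N_1-1)\cdots A_{w_{N_1}}(0)$ is a product of conjugations for times in $[0,N_1-1]$ and $\tilde A_+=A_{w_{N_1-1}}(-1)\cdots A_{w_0}(-N_1)$ is a product of conjugations for times in $[-N_1,-1]$. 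Since $U(\pm N_1)$ are unitary, bounding $\|A_{\mathbf w}\|$ reduces to bounding $\|\tilde B\tilde A_+\|$.

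Next, I would apply the long-time Egorov statement~\eqref{e:long-egorov-1}--\eqref{e:long-egorov-2} iteratively, as in the proof of Lemma~\ref{l:long-word-egorov}, to write
\begin{equation*}
\tilde B=\Op_h^{L_s}(a_-)+\mathcal O(h^{1-\rho-})_{L^2\to L^2},\qquad
\tilde A_+=\Op_h^{L_u}(a_+)+\mathcal O(h^{1-\rho-})_{L^2\to L^2},
\end{equation*}
with the explicit symbols
\begin{equation*}
a_-=\prod_{j=0}^{N_1-1}a_{w_{N_1+j}}\circ\varphi_j,\qquad
a_+=\prod_{j=0}^{N_1-1}a_{w_j}\circ\varphi_{j-N_1},
\end{equation*}
in the classes $S^{\comp}_{L_s,\rho}$ and $S^{\comp}_{L_u,\rho}$ respectively. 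By construction $\supp a_+$ is disjoint from $\varphi_{N_1-j}(\overline{\mathcal U_{w_j}})$ for each $j$, and dually for $a_-$, so both symbols describe "trapped" sets avoiding a hole at every iterate of the flow.

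The heart of the argument is porosity. Applying Proposition~\ref{l:horocycle-unique} to each of $\mathcal U_1$ and $\mathcal U_2$ produces a time $T$ such that every unstable horocyclic arc of length $T$ meets $\mathcal U_1$ and every such arc meets $\mathcal U_2$; this, together with the contraction/expansion rates of the geodesic flow on $U_\pm$, should yield that $a_+$ is porous along the stable ($U_+$) direction and $a_-$ is porous along the unstable ($U_-$) direction at scales between $h$ and a fixed constant, uniformly in $\mathbf w$ (this is the content of the forthcoming Lemma~\ref{l:porosity-verified}). With porosity in hand, the fractal uncertainty principle for porous sets from~\cite{fullgap}, packaged into an operator norm statement in the anisotropic calculi as in~\cite{hgap}, yields some $\beta_0>0$ depending only on the porosity constants (hence only on $M,\mathcal U_1,\mathcal U_2$) such that
\begin{equation*}
\|\Op_h^{L_s}(a_-)\Op_h^{L_u}(a_+)\|_{L^2\to L^2}\leq Ch^{\beta_0}.
\end{equation*}
Choosing $\rho\in(0,1)$ close enough to $1$ that $1-\rho<\beta_0$ and taking $\beta=\min(\beta_0,1-\rho-)/2>0$ absorbs the Egorov remainder and gives the claimed bound. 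The main obstacle is the porosity verification: one has to propagate the horocyclic recurrence statement of Proposition~\ref{l:horocycle-unique} to \emph{every} iterate along the orbit so that $\supp a_\pm$ has gaps at all dyadic scales down to $h^\rho$, and to do this uniformly over words $\mathbf w$; the rest of the argument is a fairly mechanical combination of the anisotropic calculus with the black-box FUP from~\cite{fullgap}.
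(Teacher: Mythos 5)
Your reduction is exactly the paper's: the same splitting of $\mathbf w$ into two halves of length $N_1$, conjugation by $U(N_1)$ so that $U(N_1)A_{\mathbf w}U(-N_1)$ factors into a ``past'' and a ``future'' product, the long-time Egorov step placing the two factors in the calculi $\Psi^{\comp}_{h,L_s,\rho}$ and $\Psi^{\comp}_{h,L_u,\rho}$ with the same product symbols $a_\mp$ and $\mathcal O(h^{1-\rho-})$ errors, the correct assignment of porosity directions ($\supp a_+$ along $U_+$, $\supp a_-$ along $U_-$), and the final appeal to a fractal uncertainty principle for porous sets. However, the step you yourself call ``the main obstacle'' is a genuine gap: the porosity of $\supp a_\pm$ along $U_\pm$ down to scale $\sim h^\rho$, uniformly in $\mathbf w$, is precisely Lemma~\ref{l:porosity-verified}, which is \emph{part of the proof of this proposition}, and you assert it (``should yield'', ``forthcoming Lemma'') rather than prove it. What is missing is the scale-by-scale mechanism: given $(x,\xi)$ and a scale $\tau\in[K_1h^\rho,1]$, one chooses the unique iterate $j$ with $e^{j}\tau\approx T$, where $T$ is the recurrence time of Proposition~\ref{l:horocycle-unique} for slightly shrunken conic sets $\mathcal U_1',\mathcal U_2'$; the $j$-th letter of the word then selects which hole is relevant, since $\supp a_-\cap\varphi_{-j}(\mathcal U_{w_j^-})=\emptyset$; applying the horocyclic recurrence at $\varphi_j(x,\xi)$ gives $s_w\in[0,T]$ with $e^{s_wU_-}\varphi_j(x,\xi)\in\mathcal U'_{w_j^-}$, and the commutation relation~\eqref{e:comm-rel}, i.e.\ $\varphi_j e^{sU_-}=e^{e^{j}sU_-}\varphi_j$, converts this unit-scale excursion into one of size $s_0=e^{-j}s_w\in[0,\tau]$, producing a slice $\Sigma^-_{\varepsilon_0\tau,\nu_1}(e^{s_0U_-}(x,\xi))\subset\varphi_{-j}(\mathcal U_{w_j^-})$ disjoint from $\supp a_-$ (and symmetrically for $a_+$ with the flow reversed). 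The restriction $\tau\geq K_1h^\rho$ is exactly what keeps $j\leq N_1-1$, i.e.\ it is why propagation up to time $N_1\approx\rho\log(1/h)$ is needed, and uniformity in $\mathbf w$ is automatic since the constants depend only on $T,\mathcal U_1,\mathcal U_2$. Nothing in your sketch produces this selection of a different letter for each scale, which is the whole point of having the two holes $\mathcal U_1,\mathcal U_2$ built into the partition.

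Two smaller caveats. First, the ``operator-norm FUP in the anisotropic calculi'' you invoke is not literally available in~\cite{fullgap} or~\cite{hgap}: \cite{fullgap} treats Ahlfors--David regular subsets of $\mathbb R$ and \cite{hgap} the FIO/calculus reduction for regular sets, so one still has to embed porous subsets of $\mathbb R$ into $\delta$-regular sets with $\delta<1$ (Lemma~\ref{l:porous-regular}) and convert porosity of the three-dimensional supports along $U_\pm$ into one-dimensional porosity of their projections to the boundary circle in the $\varkappa^\pm$ coordinates (Lemma~\ref{l:porosity-conversion}); this is routine given~\cite{hgap}, but it is part of what must be supplied. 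Second, the parameter bookkeeping at the end is muddled: choosing $\rho$ close to $1$ so that $1-\rho<\beta_0$ does nothing to ``absorb'' the $\mathcal O(h^{1-\rho-})$ Egorov error (if anything it makes that error dominant); what is needed is simply $\beta\leq\min(\beta_0,1-\rho)$, which your final choice of $\beta$ as half the minimum achieves, so this is harmless, and the value of $\rho$ is in any case dictated by the uncertainty principle rather than chosen freely at this stage.
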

\Remark
Since the proof of~\cite[Proposition~4.2]{fullgap} uses the triangle inequality, the estimate
on the norm of $A_{\mathbf w}$ is $\mathcal O(h^{\tilde\beta-2(1-\rho)})$ for some $\tilde\beta>0$
depending on $M,\mathcal U_1,\mathcal U_2$,
thus $\rho$ has to be close enough to~1 depending on $\tilde\beta$ to get decay of this norm.
On the other hand we cannot put $\rho=1$ since the calculus described in~\S\ref{s:fancy-calculus} only
works for~$\rho<1$.

\subsection{End of the proof of Theorem~\ref{t:estimate}}
  \label{s:end-of-proof}

Take $\beta,\rho$ from Proposition~\ref{l:ultimate-fup};
we may assume that $\beta<1/8$.
Since $A_{\mathcal X}+A_{\mathcal Y}=A_{\mathcal W(2N_1)}=(A_1+A_2)^{2N_1}$ by~\eqref{e:A-total}, we have
for all $u\in H^2(M)$
$$
\|u\|_{L^2}\leq \|A_{\mathcal X}u\|_{L^2}
+\|A_{\mathcal Y}u\|_{L^2}
+\|u-(A_1+A_2)^{2N_1}u\|_{L^2}.
$$
Combining Lemma~\ref{l:X-count} with Proposition~\ref{l:ultimate-fup}
and using the triangle inequality, we have
\begin{equation}
  \label{e:X-totaled}
\|A_{\mathcal X}u\|_{L^2}=\mathcal O(h^{\beta-4\sqrt{\alpha}})\|u\|_{L^2}.
\end{equation}
Combining this with Proposition~\ref{l:nontrapped-estimate}
and Lemma~\ref{l:A-0}
we obtain
\begin{equation}
  \label{e:almost-there}
\|u\|_{L^2}\leq {C\over\alpha}\|\Op_h(a)u\|_{L^2}+{C\log(1/h)\over\alpha h}\|(-h^2\Delta-I)u\|_{L^2}
+\mathcal O(h^{\beta-4\sqrt{\alpha}})\|u\|_{L^2}.
\end{equation}
Choosing 
\begin{equation}
  \label{e:the-alpha}
\alpha:={\beta^2\over 64},\quad
\beta-4\sqrt{\alpha}={\beta\over 2}
\end{equation}
and taking $h$ small enough 
to remove the $\mathcal O(h^{\beta/2})$ term on the right-hand side of~\eqref{e:almost-there},
we obtain~\eqref{e:estimate}, finishing the proof.

\section{The controlled region}
  \label{s:nontrapped}

In this section we prove Proposition~\ref{l:nontrapped-estimate},
estimating an approximate eigenfunction $u$ on geodesics which spend a positive
fraction of their time inside $\{a\neq 0\}$. The proof uses tools similar to~\cite[\S2]{Anantharaman}.

\subsection{Control and propagation}

Recall the operator $A_1\in\Psi^{\comp}_h(M)$ constructed in~\S\ref{s:partition}.
We first use the wavefront set restriction~\eqref{e:A-1-a}
to estimate $A_1u$:
\begin{lemm}
  \label{l:basic-control-1}
We have for all $u\in H^2(M)$
\begin{equation}
  \label{e:bc1}
\|A_1u\|_{L^2}\leq C\|\Op_h(a)u\|_{L^2}+C\|(-h^2\Delta-I)u\|_{L^2}+Ch\|u\|_{L^2}.
\end{equation}
\end{lemm}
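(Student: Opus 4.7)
The plan is to prove \eqref{e:bc1} by a two-operator microlocal elliptic parametrix: I would split $A_1$ into a piece microlocalized where $\Op_h(a)$ is elliptic and a piece microlocalized where $-h^2\Delta - I$ is elliptic. The point is that the wavefront hypotheses on $A_1$ imply that these two ellipticity regions jointly cover $\WF_h(A_1)$, so each piece can be inverted out by one of the two operators appearing on the right-hand side.

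First I would verify this covering. The elliptic set of $\Op_h(a)$ inside $T^*M$ is $\{a\neq 0\}$, and the elliptic set of $-h^2\Delta - I$ is the open set $T^*M\setminus S^*M = \{p^2 \neq 1\}$. By \eqref{e:A-1-a}, the compact set $\WF_h(A_1)\cap S^*M$ is contained in the open set $\{a\neq 0\}$, so it admits an open neighborhood $V_1\subset \{a\neq 0\}$ inside $T^*M$. The complement $K:=\WF_h(A_1)\setminus V_1$ is compact and disjoint from $S^*M$, so it admits an open neighborhood $V_2$ still disjoint from $S^*M$, i.e.\ $V_2\subset \{p^2\neq 1\}$. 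Thus $\WF_h(A_1)\subset V_1\cup V_2$.

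Next, I would take a microlocal partition of unity: operators $B_1,B_2\in\Psi^{\comp}_h(M)$ with $\WF_h(B_\ell)\subset V_\ell$ and $A_1 = B_1+B_2+\mathcal{O}(h^\infty)_{L^2\to L^2}$. On $V_1$ the principal symbol $a$ is bounded away from zero on the (compact) set $\WF_h(B_1)$, so at the symbol level $\sigma_h(B_1)/a$ extends to a smooth compactly supported function $b_1$ on $T^*M$; similarly $\sigma_h(B_2)/(p^2-1)$ extends to $b_2\in C_0^\infty(T^*M)$. Quantizing and applying the product formula gives
$$
B_1 = \Op_h(b_1)\,\Op_h(a) + \mathcal{O}(h)_{L^2\to L^2},\qquad
B_2 = \Op_h(b_2)\,(-h^2\Delta - I) + \mathcal{O}(h)_{L^2\to L^2}.
$$
Adding these identities and taking $L^2$ norms, together with the uniform $L^2$ boundedness of $\Op_h(b_1),\Op_h(b_2)$ from \eqref{e:basic-norm-bound}, yields \eqref{e:bc1}.

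The only nontrivial step is the symbol-level division — i.e.\ checking that the ratios $\sigma_h(B_\ell)/a$ and $\sigma_h(B_\ell)/(p^2-1)$ really are smooth and compactly supported on $V_\ell$ — and this is exactly where the hypothesis \eqref{e:A-1-a} is used (together with the energy-shell microlocalization \eqref{e:A-12-1}, which prevents the division by $p^2-1$ from blowing up near the zero section). Everything else is an off-the-shelf application of the semiclassical calculus, and the $Ch$ remainder is generous compared to the $\mathcal O(h^\infty)$ one would actually produce.
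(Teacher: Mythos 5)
Your proposal is correct and follows essentially the same route as the paper: the paper performs the elliptic division directly at the principal-symbol level, writing $a_1=ab+q(p^2-1)$ with $b,q\in C_0^\infty(T^*M)$ and quantizing to get $A_1=\Op_h(b)\Op_h(a)+\Op_h(q)(-h^2\Delta-I)+\mathcal O(h)_{L^2\to L^2}$, which is exactly your decomposition with the microlocal partition absorbed into the construction of $b$ and $q$. (Only cosmetic difference: your remark about the zero section is not really needed, since $p^2-1=|\xi|_g^2-1$ is smooth and nonvanishing there; the division only requires staying away from $S^*M$.)
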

\begin{proof}
By~\eqref{e:A-1-a} we have $\supp a_1\cap S^*M\subset \{a\neq 0\}$
where $a_1=\sigma_h(A_1)$. Since $p^2-1$ is a defining
function for $S^*M$,
there exist $b,q\in C_0^\infty(T^*M)$ such that
$a_1=ab+q(p^2-1)$. It follows that
\begin{equation}
  \label{e:bc1int}
A_1=\Op_h(b)\Op_h(a)+\Op_h(q)(-h^2\Delta-I)+\mathcal O(h)_{L^2\to L^2}.
\end{equation}
It remains to apply~\eqref{e:bc1int} to $u$ and use
the fact that $\Op_h(b),\Op_h(q)$ are bounded on $L^2$ uniformly in $h$.
\end{proof}
Next, if we control $Au$ for some operator $A$, then we also control
$A(t)u$ where $A(t)$ is defined using~\eqref{e:A-t}:
\begin{lemm}
  \label{l:propagated}
Assume that $A:L^2(M)\to L^2(M)$ is bounded uniformly in $h$.
Then there exists a constant $C$ such that for all $t\in\mathbb R$ and $u\in H^2(M)$
\begin{equation}
  \label{e:propagated}
\|A(t)u\|_{L^2}\leq \|Au\|_{L^2}+{C|t|\over h}\|(-h^2\Delta-I)u\|_{L^2}.
\end{equation}
\end{lemm}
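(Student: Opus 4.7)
The plan is to exploit the unitarity of $U(\pm t)$ to reduce the estimate to a comparison of $U(t)u$ with the scalar $e^{-it/h}u$, which measures how far $u$ is from being an exact eigenfunction of $P$ with eigenvalue $1$. Since $U(-t)$ is unitary on $L^2(M)$, I have $\|A(t)u\|_{L^2} = \|U(-t)AU(t)u\|_{L^2} = \|AU(t)u\|_{L^2}$. Thus if I can show that $\|U(t)u - e^{-it/h}u\|_{L^2} \le (C|t|/h)\|(-h^2\Delta - I)u\|_{L^2}$, then applying $A$ (which is bounded uniformly in $h$) and the triangle inequality immediately yields~\eqref{e:propagated}, since $\|A(e^{-it/h}u)\|_{L^2} = \|Au\|_{L^2}$.

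To carry out the comparison I would use a standard Duhamel identity. For $t \ge 0$ set $\phi(s) := U(t-s)\,e^{-is/h}u$ on $[0,t]$, so $\phi(0) = U(t)u$ and $\phi(t) = e^{-it/h}u$. Differentiating and using that $P$ commutes with $U(\cdot)$ and with scalar multiplication by $e^{-is/h}$, one obtains $\phi'(s) = (i/h)\,U(t-s)\,e^{-is/h}(P - I)u$. Integrating over $[0,t]$ and invoking the unitarity of $U(t-s)$ yields $\|U(t)u - e^{-it/h}u\|_{L^2} \le (|t|/h)\|(P - I)u\|_{L^2}$; the case $t < 0$ is analogous.

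It remains to bound $\|(P-I)u\|_{L^2}$ by $C\|(-h^2\Delta - I)u\|_{L^2}$. Here I would use the functional calculus of the self-adjoint operator $-h^2\Delta$: since $\psi_P(\lambda) = \sqrt{\lambda}$ near $\lambda = 1$, one has $\psi_P(1) = 1$, so the function $g(\lambda) := (\psi_P(\lambda) - 1)/(\lambda - 1)$ extends smoothly across $\lambda = 1$ (with value $\psi_P'(1) = 1/2$). Crucially, because $\psi_P$ has compact support in $(0,\infty)$, we have $g(\lambda) = 1/(1-\lambda)$ outside a compact set, so $g$ is bounded on all of $[0,\infty)$. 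Then $P - I = g(-h^2\Delta)(-h^2\Delta - I)$ as bounded operators on $L^2$, with $\|g(-h^2\Delta)\|_{L^2\to L^2} \le \sup|g| < \infty$, and chaining with the two previous steps completes the proof. The only step requiring genuine care is this last boundedness of $g$ globally on the spectrum of $-h^2\Delta$: locally near $\lambda = 1$ it is trivial, but globally it relies essentially on $\psi_P$ being compactly supported inside $(0,\infty)$.
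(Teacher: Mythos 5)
Your proof is correct and follows essentially the same route as the paper: unitarity of $U(-t)$, a Duhamel-type comparison of $U(t)u$ with $e^{-it/h}u$ giving the factor $(|t|/h)\|(P-I)u\|_{L^2}$, and then the functional-calculus identity $P-I=g(-h^2\Delta)(-h^2\Delta-I)$ with $g(\lambda)=(\psi_P(\lambda)-1)/(\lambda-1)$ bounded (the paper's $\psi_E$). The only cosmetic difference is that you differentiate $\phi(s)=U(t-s)e^{-is/h}u$ while the paper differentiates $e^{it/h}U(t)$; these are the same estimate.
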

\begin{proof}
Recall from~\eqref{e:A-t} that $A(t)=U(-t)AU(t)$
where $U(t)=\exp(-itP/h)$ and $P\in\Psi^{\comp}_h(M)$ is defined in~\eqref{e:the-P}.
Since
$$
\partial_t\big(e^{it/h}U(t)\big)=-{i\over h}e^{it/h}U(t)(P-I),
$$
integrating from $0$ to $t$ we have
$$
\|U(t)u-e^{-it/h}u\|_{L^2}
=\|e^{it/h}U(t)u-u\|_{L^2}\leq {|t|\over h}\|(P-I)u\|_{L^2}.
$$
Then
\begin{equation}
  \label{e:propagated-int-1}
\|A(t)u\|_{L^2}=\|A U(t)u\|_{L^2}
\leq \|A u\|_{L^2}+{C|t|\over h}\|(P-I)u\|_{L^2}.
\end{equation}
We have $P-I=\psi_E(-h^2\Delta)(-h^2\Delta-I)$
where $\psi_E(\lambda)=(\psi_P(\lambda)-1)/(\lambda-1)$.
Therefore
\begin{equation}
  \label{e:propagated-int-2}
\|(P-I)u\|_{L^2}\leq C\|(-h^2\Delta-I)u\|_{L^2}.
\end{equation}
Combining~\eqref{e:propagated-int-1} and~\eqref{e:propagated-int-2} we obtain~\eqref{e:propagated}.
\end{proof}
Combining Lemmas~\ref{l:basic-control-1} and~\ref{l:propagated}, we obtain
\begin{lemm}
  \label{l:propagated-control}
For all $t\in \mathbb R$ and $u\in H^2(M)$, we have
\begin{equation}
  \label{e:propagatedc}
\|A_1(t)u\|_{L^2}\leq C\|\Op_h(a) u\|_{L^2}
+{C\langle t\rangle\over h}\|(-h^2\Delta-I)u\|_{L^2}
+Ch\|u\|_{L^2}
\end{equation}
where $\langle t\rangle :=\sqrt{1+t^2}$ and
the constant $C$ is independent of $t$ and $h$.
\end{lemm}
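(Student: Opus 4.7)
The plan is simply to chain Lemma~\ref{l:propagated} and Lemma~\ref{l:basic-control-1}. First I would observe that $A_1\in\Psi^{\comp}_h(M)$, so in particular $A_1$ is bounded on $L^2(M)$ uniformly in~$h$; hence Lemma~\ref{l:propagated} applies with $A:=A_1$ and yields
$$
\|A_1(t)u\|_{L^2}\leq \|A_1u\|_{L^2}+{C|t|\over h}\|(-h^2\Delta-I)u\|_{L^2}.
$$
Next I would bound the first term on the right-hand side by Lemma~\ref{l:basic-control-1}:
$$
\|A_1u\|_{L^2}\leq C\|\Op_h(a)u\|_{L^2}+C\|(-h^2\Delta-I)u\|_{L^2}+Ch\|u\|_{L^2}.
$$

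Inserting the second estimate into the first produces the three desired terms. The only bookkeeping point is combining the coefficients in front of $\|(-h^2\Delta-I)u\|_{L^2}$: we get $C+C|t|/h$, which is bounded by $C'\langle t\rangle/h$ for $0<h\leq 1$ (since $1\leq h^{-1}\leq \langle t\rangle/h$), with the new constant still independent of $t$ and $h$. This gives~\eqref{e:propagatedc}.

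There is no real obstacle here; the lemma is a direct juxtaposition of the two preceding lemmas, and no new ideas are required beyond checking that the $h$-independence of the constants from both inputs is preserved under the composition.
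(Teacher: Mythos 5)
Your proposal is correct and is exactly how the paper obtains this lemma: it simply combines Lemma~\ref{l:propagated} (applied with $A:=A_1$, which is uniformly bounded on $L^2$ since $A_1\in\Psi^{\comp}_h(M)$) with Lemma~\ref{l:basic-control-1}, absorbing the constant term into $C\langle t\rangle/h$ for $0<h\leq 1$. No issues.
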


\subsection{Operators corresponding to weighted words}

By Lemma~\ref{l:long-word-egorov}, for each $\mathbf w\in \mathcal W(N_0)$
the operator $A_{\mathbf w}$ is pseudodifferential
modulo an $\mathcal O(h^{3/4})_{L^2\to L^2}$ remainder.
However, for a subset $\mathcal E\subset \mathcal W(N_0)$
the operator $A_{\mathcal E}$ defined in~\eqref{e:A-subset}
is the sum of many operators of the form $A_{\mathbf w}$
and thus a priori might not even be bounded on $L^2$ uniformly in~$h$.
In this section we show that $A_{\mathcal E}$ is still a pseudodifferential
operator plus a small remainder, using the fact that
the corresponding symbol $a_{\mathcal E}$ is bounded.

More generally one can consider operators obtained by assigning a coefficient to each word.
For a function $c:\mathcal W(N_0)\to\mathbb C$, define the operator $A_c$ and the symbol $a_c$ by
\begin{equation}
	\label{e:op-symbol-weight}
A_{c}:=\sum_{\mathbf w\in \mathcal W(N_0)}c(\mathbf w)A_{\mathbf w},
\quad
a_c:=\sum_{\mathbf w\in\mathcal W(N_0)}c(\mathbf w)a_{\mathbf w}.
\end{equation}
Note that for $\mathcal E\subset \mathcal W(N_0)$ we have
$A_{\mathcal E}=A_{\mathbf 1_{\mathcal E}}$ where
$\mathbf 1_{\mathcal E}$ is the indicator function of $\mathcal E$.

The next lemma shows that the operator $A_c$ is pseudodifferential
modulo a small remainder. Recall the symbol classes $S^{\comp}_{L_s,\rho,\rho'}(T^*M\setminus 0)$
introduced in~\S\ref{s:appendix-symbols}.
\begin{lemm}
  \label{l:small-time-good}
Assume $\sup|c|\leq 1$. Then 
\begin{equation}
  \label{e:stg}
a_c\in S^{\comp}_{L_s,1/2,1/4}(T^*M\setminus 0),\quad
A_c=\Op_h^{L_s}(a_c)+\mathcal O(h^{1/2})_{L^2\to L^2}.
\end{equation}
The $S^{\comp}_{L_s,1/2,1/4}$ seminorms of $a_c$ and
the constant in $\mathcal O(h^{1/2})$ are independent of $c$.
\end{lemm}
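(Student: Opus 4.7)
The plan is to treat the two conclusions separately, using as the essential structural fact that $a_1,a_2$ form a positive partition with $a_1,a_2\ge 0$ and $a_1+a_2\le 1$ (by~\eqref{e:A-symbol}). This gives $a_{\mathbf w}\ge 0$ together with the telescoping identity
$$
\sum_{\mathbf w\in\mathcal W(N_0)}a_{\mathbf w}=\prod_{j=0}^{N_0-1}(a_1+a_2)\circ\varphi_j\le 1,
$$
which is what will prevent the cardinality $\#\mathcal W(N_0)=2^{N_0}\le Ch^{-(\rho\log 2)/4}$ from producing a polynomial loss.

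For the symbol bound, $|a_c|\le\sum_{\mathbf w}|c(\mathbf w)|a_{\mathbf w}\le 1$ is immediate. For the derivative bounds, I would apply the Leibniz rule to $a_{\mathbf w}=\prod_j a_{w_j}\circ\varphi_j$ and use the commutation relations~\eqref{e:comm-rel-0}--\eqref{e:comm-rel} to pull the propagator through each derivative: a derivative $U_+$ landing on the factor at time $j$ produces a scalar factor $e^{-j}$, $U_-$ produces $e^j$, and $H_p,D$ produce $1$. Taking absolute values and summing over $\mathbf w$ with $|c(\mathbf w)|\le 1$, the sum factorizes by letter, so each undifferentiated factor contributes $\sum_{w_k}a_{w_k}\circ\varphi_k\le 1$. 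What remains is a sum over assignments of the derivatives to times $j\in\{0,\ldots,N_0-1\}$, which factorizes further as $\prod_i\sum_{j=0}^{N_0-1}g_i(j)$ with each one-derivative sum bounded by $Ce^{N_0}\le Ch^{-\rho/4}$ for $U_-$, by $C$ for $U_+$, and by $CN_0=\mathcal O(h^{0-})$ for $H_p,D$. Since $\rho<1$, every transverse derivative costs at most $h^{-\rho/4-}<h^{-1/2-}$ and every tangential derivative costs at most $\mathcal O(h^{0-})<h^{-1/4-}$, placing $a_c$ in $S^{\comp}_{L_s,1/2,1/4}$ with seminorms depending only on the order of differentiation.

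For the operator identity, linearity of quantization gives $\Op_h^{L_s}(a_c)=\sum_{\mathbf w}c(\mathbf w)\Op_h^{L_s}(a_{\mathbf w})$, and Lemma~\ref{l:long-word-egorov} supplies the single-word estimate $\|A_{\mathbf w}-\Op_h^{L_s}(a_{\mathbf w})\|_{L^2\to L^2}=\mathcal O(h^{3/4})$ uniformly in $\mathbf w\in\mathcal W(N_0)$. The triangle inequality with $|c|\le 1$ then bounds the difference by $\#\mathcal W(N_0)\cdot\mathcal O(h^{3/4})\le Ch^{3/4-(\rho\log 2)/4}$. Since $(3-\rho\log 2)/4>1/2$ for every $\rho<1$ (because $\log 2<1$), the difference is $\mathcal O(h^{1/2})$ with a constant independent of $c$.

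The delicate point is the derivative estimation, where one really must exploit the product structure of $a_{\mathbf w}$ together with the partition property $a_1+a_2\le 1$ to factor the sum over $\mathbf w$; naively dominating $a_{\mathbf w}$ by $1$ and summing would produce a disastrous $2^{N_0}$ factor. Once the factorization trick is in place, the symbol bounds fit into $S^{\comp}_{L_s,1/2,1/4}$ with slack, and the operator-level identity follows from brute summation of the single-word Egorov bound.
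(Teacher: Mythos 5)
Your proposal is correct, and the sup bound and the operator identity are exactly the paper's argument: $|a_c|\le a_{\mathcal W(N_0)}=\prod_j(a_1+a_2)\circ\varphi_j\le 1$ via the positivity in~\eqref{e:A-symbol}, and then summation of the single-word Egorov bound~\eqref{e:lwe-1} over the $2^{N_0}$ words, with $2^{N_0}\le Ch^{-\rho\log 2/4}\le Ch^{-1/4}$ absorbing into $h^{3/4}$ to give $\mathcal O(h^{1/2})$.

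Where you diverge is the derivative estimate, and here your closing remark misjudges the paper: for derivatives of positive order the paper does precisely the ``naive'' thing you call disastrous. It bounds $|Y_1\dots Y_mZ_1\dots Z_ka_c|$ by the sum over all $2^{N_0}$ words of the single-word bound $Ch^{-k\rho/4-}$ coming from $a_{\mathbf w}\in S^{\comp}_{L_s,\rho/4}$ in~\eqref{e:lwe-1}, and pays the full factor $2^{N_0}\le h^{-1/4+0.01}$. This is harmless because $N_0\approx(\rho/4)\log(1/h)$ and $\log 2<1$, so the loss is only $h^{-1/4+}$, and the target class $S^{\comp}_{L_s,1/2,1/4}$ was chosen with exactly enough slack: for $m+k>0$ one has $h^{-(k+1)/4}\le h^{-k/2-m/4}$. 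The $2^{N_0}$ count is genuinely fatal only for the zeroth-order bound $\sup|a_c|\le C$, which is the one place both you and the paper invoke the partition property. Your refined argument --- Leibniz on $a_{\mathbf w}=\prod_j a_{w_j}\circ\varphi_j$, pulling the exponential factors $e^{\mp j}$ out via~\eqref{e:comm-rel-0}--\eqref{e:comm-rel}, factoring the sum over $\mathbf w$ letter by letter so undifferentiated slots contribute $(a_1+a_2)\circ\varphi_j\le 1$, and summing the per-derivative weights over slots --- is correct (modulo the standard reduction from general vector fields to the frame $H_p,U_+,U_-,D$, which you should at least mention), and it in fact yields a stronger conclusion, namely membership in a class with transverse loss only $h^{-\rho/4-}$ and tangential loss $h^{0-}$. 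What the paper's cruder route buys is brevity: it reuses Lemma~\ref{l:long-word-egorov} as a black box and needs no second look at the product structure; what your route buys is a sharper symbol class, which is more than Lemma~\ref{l:small-time-good} asks for and is not needed downstream.
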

\begin{proof}
We first show that $a_c\in S^{\comp}_{L_s,1/2,1/4}(T^*M\setminus 0)$.
Since $a_1,a_2\geq 0$ and $a_1+a_2=1-a_0\leq 1$, we have for all $(x,\xi)\in T^*M\setminus 0$
$$
|a_c(x,\xi)|\leq a_{\mathcal W(N_0)}(x,\xi)=\prod_{j=0}^{N_0-1}(a_1+a_2)(\varphi_j(x,\xi))\leq 1.
$$
It remains to show that for $m+k>0$ and all vector fields $Y_1,\dots,Y_m,Z_1,\dots,Z_k$ on $T^*M\setminus 0$
such that $Y_1,\dots,Y_m$ are tangent to $L_s$ we have
\begin{equation}
  \label{e:stg-1}
\sup|Y_1\dots Y_mZ_1\dots Z_k a_c|\leq Ch^{-k/2-m/4}. 
\end{equation}
By the triangle inequality the left-hand side of~\eqref{e:stg-1} is bounded by
$$
\sum_{\mathbf w\in\mathcal W(N_0)}\sup|Y_1\dots Y_mZ_1\dots Z_k a_{\mathbf w}|.
$$
By~\eqref{e:lwe-1} each summand is bounded by $Ch^{-k/4-0.01}$ where $C$ is independent of $\mathbf w$.
The number of summands is equal to $2^{N_0}\leq h^{-1/4+0.01}$.
Therefore the left-hand side of~\eqref{e:stg-1} is bounded
by $Ch^{-(k+1)/4}\leq Ch^{-k/2-m/4}$, giving~\eqref{e:stg-1}.

Finally, by~\eqref{e:lwe-1} we have
$$
A_c=\sum_{\mathbf w\in \mathcal W(N_0)}c(\mathbf w)\big(\Op_h^{L_s}(a_{\mathbf w})+\mathcal O(h^{3/4})_{L^2\to L^2}\big)
=\Op_h^{L_s}(a_c)+\mathcal O(h^{1/2})_{L^2\to L^2}
$$
finishing the proof.
\end{proof}
Combining Lemma~\ref{l:small-time-good} with the sharp G\r arding inequality (Lemma~\ref{l:gaarding})
we deduce the following ``almost monotonicity'' property for norms of the operators $A_c$:
\begin{lemm}
  \label{l:almost-positivity}
Assume $c,d:\mathcal W(N_0)\to \mathbb R$ and
$|c(\mathbf w)|\leq d(\mathbf w)\leq 1$ for all $\mathbf w\in \mathcal W(N_0)$.
Then for all $u\in L^2(M)$ we have
$$
\|A_c u\|_{L^2}\leq \|A_d u\|_{L^2}+ Ch^{1/8}\|u\|_{L^2}
$$
where the constant $C$ is independent of $c,d$.
\end{lemm}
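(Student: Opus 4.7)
The plan is to exploit the pointwise inequality $|a_c|\leq a_d$ via a sharp G\r{a}rding argument in the anisotropic calculus $\Psi^{\comp}_{h,L_s,1/2,1/4}$ of Lemma~\ref{l:small-time-good}. The first observation is purely symbolic: since each $a_{\mathbf w}$ is a product of nonnegative factors by~\eqref{e:symbol-word} and~\eqref{e:A-symbol}, the hypothesis $|c(\mathbf w)|\leq d(\mathbf w)$ gives
\[
|a_c(x,\xi)|=\Big|\sum_{\mathbf w}c(\mathbf w)a_{\mathbf w}(x,\xi)\Big|\leq \sum_{\mathbf w}d(\mathbf w)a_{\mathbf w}(x,\xi)=a_d(x,\xi),
\]
so the symbol $a_d^2-|a_c|^2$ is nonnegative on $T^*M\setminus 0$, and by Lemma~\ref{l:small-time-good} both $a_c$ and $a_d$ lie in $S^{\comp}_{L_s,1/2,1/4}$ with seminorms uniform in $c,d$.

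Next I would pass to the quadratic form $\|A_du\|_{L^2}^2-\|A_cu\|_{L^2}^2=\langle(A_d^*A_d-A_c^*A_c)u,u\rangle$. Combining Lemma~\ref{l:small-time-good} with the composition and adjoint formulas in $\Psi^{\comp}_{h,L_s,1/2,1/4}$ established in the Appendix, I would identify
\[
A_d^*A_d-A_c^*A_c=\Op_h^{L_s}\bigl(a_d^2-|a_c|^2\bigr)+R,\qquad \|R\|_{L^2\to L^2}=\mathcal O(h^{1/4}),
\]
uniformly in $c,d$. Then applying the exotic sharp G\r{a}rding inequality (Lemma~\ref{l:gaarding}) to the nonnegative symbol $a_d^2-|a_c|^2$ yields $\langle\Op_h^{L_s}(a_d^2-|a_c|^2)u,u\rangle\geq -Ch^{1/4}\|u\|_{L^2}^2$, so
\[
\|A_cu\|_{L^2}^2\leq \|A_du\|_{L^2}^2+Ch^{1/4}\|u\|_{L^2}^2.
\]
The stated bound with $h^{1/8}$ then follows from the elementary inequality $\sqrt{X+Y}\leq \sqrt X+\sqrt Y$.

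The main technical obstacle is the middle step: tracking the remainder through the bilinear expression $A^*A$ in the anisotropic calculus, and verifying that composition and adjoint together produce errors of size $\mathcal O(h^{1/4})_{L^2\to L^2}$. This amounts to checking that in the $S^{\comp}_{L_s,1/2,1/4}$ calculus the composition gains at least a factor $h^{1-2\cdot(1/4)}=h^{1/2}$, and that the $\mathcal O(h^{1/2})$ error of Lemma~\ref{l:small-time-good}, when combined with the uniform $L^2$ bounds on $A_c,A_d,\Op_h^{L_s}(a_c),\Op_h^{L_s}(a_d)$, is compatible with the required exponent. Once these calculus facts are in place, everything else reduces to the sharp G\r{a}rding application and the trivial square-root inequality.
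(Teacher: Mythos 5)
Your proposal is correct and follows essentially the same route as the paper's proof: replace $A_c,A_d$ by $\Op_h^{L_s}(a_c),\Op_h^{L_s}(a_d)$ via Lemma~\ref{l:small-time-good}, reduce to positivity of the quadratic form with symbol $a_d^2-a_c^2\geq 0$ using the composition and adjoint formulas~\eqref{e:genius-4}, \eqref{e:genius-5} with $\mathcal O(h^{1/4})$ remainders, apply the sharp G\r arding inequality of Lemma~\ref{l:gaarding}, and take square roots. One minor correction: the gain in the $S^{\comp}_{L_s,1/2,1/4}$ calculus is $h^{1-\rho-\rho'}=h^{1/4}$, not $h^{1/2}$ as stated in your final paragraph, but this is precisely the $\mathcal O(h^{1/4})$ remainder your main argument uses, so the conclusion with $h^{1/8}$ is unaffected.
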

\begin{proof}
By~\eqref{e:stg} we may replace $A_c,A_d$ by $\Op_h^{L_s}(a_c),\Op_h^{L_s}(a_d)$. It is then enough to prove
$$
\|\Op_h^{L_s}(a_c)u\|_{L^2}^2\leq \|\Op_h^{L_s}(a_d)u\|_{L^2}^2+Ch^{1/4}\|u\|_{L^2}^2.
$$
This is equivalent to
\begin{equation}
  \label{e:apos-1}
\langle Bu,u\rangle_{L^2}\geq -Ch^{1/4}\|u\|_{L^2}^2,\quad
B:=\Op_h^{L_s}(a_d)^*\Op_h^{L_s}(a_d)
-\Op_h^{L_s}(a_c)^*\Op_h^{L_s}(a_c). 
\end{equation}
Recall that $a_c,a_d\in S^{\comp}_{L_s,1/2,1/4}(T^*M\setminus 0)$.
By~\eqref{e:genius-4} and~\eqref{e:genius-5}
we have
\begin{equation}
  \label{e:apos-2}
B=\Op_h^{L_s}(a_d^2-a_c^2)+\mathcal O(h^{1/4})_{L^2\to L^2}.
\end{equation}
Since $|c(\mathbf w)|\leq d(\mathbf w)$ for all $\mathbf w$,
we have $0\leq a_d^2-a_c^2\in S^{\comp}_{L_s,1/2,1/4}(T^*M\setminus 0)$. Then by Lemma~\ref{l:gaarding}
\begin{equation}
  \label{e:apos-3}
\Re\langle \Op_h^{L_s}(a_d^2-a_c^2)u,u\rangle_{L^2}\geq -Ch^{1/4}\|u\|_{L^2}^2.
\end{equation}
Combining~\eqref{e:apos-2} and~\eqref{e:apos-3}, we get~\eqref{e:apos-1}, finishing the proof.
\end{proof}

\subsection{Proof of Proposition \ref{l:nontrapped-estimate}}
  \label{s:nontrapped-proof}

We first estimate $A_{\mathcal Z}u$ where $\mathcal{Z}\subset\mathcal{W}(N_0)$ is the set of controlled words
defined in~\eqref{e:Z-control-set}:
\begin{lemm}
  \label{l:Y-control-1}
We have for all $u\in H^2(M)$, with the constant $C$ independent of $\alpha$
\begin{equation}
  \label{e:Yc-1}
\|A_{\mathcal Z}u\|_{L^2}\leq
{C\over\alpha}\|\Op_h(a)u\|_{L^2}+{C\log(1/h)\over \alpha h}\|(-h^2\Delta-I)u\|_{L^2}
+\mathcal O(h^{1/8})\|u\|_{L^2}.
\end{equation}
\end{lemm}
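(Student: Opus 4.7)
The plan is to exploit a Chebyshev-type inequality at the symbol level: the indicator $\mathbf 1_{\mathcal Z}$ of the controlled set is pointwise dominated by the density function $F/\alpha$, and the operator $A_{c'}$ associated to the weight $c'=F/\alpha$ is amenable to direct analysis because $F$ is linear in the letters of $\mathbf w$. First I would note that since $0\le F(\mathbf w)\le 1$ and $\mathbf 1_{\mathcal Z}(\mathbf w)\le F(\mathbf w)/\alpha$, rescaling and applying Lemma~\ref{l:almost-positivity} with $c:=\alpha\mathbf 1_{\mathcal Z}$ and $d:=F$ (both valued in $[0,1]$) gives
\[
\|A_{\mathcal Z}u\|_{L^2}\ \le\ \|A_{c'}u\|_{L^2}+C\alpha^{-1}h^{1/8}\|u\|_{L^2},
\]
which is the ``semiclassical Chebyshev'' step that costs only the $\mathcal O(h^{1/8})\|u\|$ error allowed by the statement.

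Next I would unfold $A_{c'}$ using the definition of $F$ and interchange the sums:
\[
A_{c'}\ =\ \frac{1}{\alpha N_0}\sum_{j=0}^{N_0-1}\sum_{\mathbf w:\,w_j=1}A_{\mathbf w}.
\]
Because multiplication in $A_{\mathbf w}=A_{w_{N_0-1}}(N_0-1)\cdots A_{w_0}(0)$ distributes over the independent $\{1,2\}$-choices of the letters $w_k$ with $k\neq j$, the inner sum telescopes to $(A_1+A_2)^{N_0-1-j}\,A_1(j)\,(A_1+A_2)^j$. Crucially, $A_1+A_2=I-A_0$ and $P$ are both functions of $-h^2\Delta$ and hence commute, so $(A_1+A_2)(t)=A_1+A_2$ for every $t$. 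Combined with the bound $\|A_1+A_2\|_{L^2\to L^2}\le 1+Ch$ from \eqref{e:basic-norm-bound}, the outer factor $(A_1+A_2)^{N_0-1-j}$ is bounded by $1+o(1)$ for $j\le N_0=\mathcal O(\log(1/h))$.

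Then I would estimate each summand in two moves. Using Lemma~\ref{l:A-0} to replace $(A_1+A_2)^j u$ by $u$ produces an error of size $C\log(1/h)\|(-h^2\Delta-I)u\|_{L^2}$; and Lemma~\ref{l:propagated-control} applied to $A_1(j)u$ gives
\[
\|A_1(j)u\|_{L^2}\ \le\ C\|\Op_h(a)u\|_{L^2}+\frac{C\langle j\rangle}{h}\|(-h^2\Delta-I)u\|_{L^2}+Ch\|u\|_{L^2}.
\]
Averaging over $0\le j<N_0$ the $\langle j\rangle/h$ factor gives $C\log(1/h)/h$, so
\[
\|A_{c'}u\|_{L^2}\ \le\ \frac{C}{\alpha}\|\Op_h(a)u\|_{L^2}+\frac{C\log(1/h)}{\alpha h}\|(-h^2\Delta-I)u\|_{L^2}+\frac{Ch}{\alpha}\|u\|_{L^2}.
\]
Absorbing the last term into $\mathcal O(h^{1/8})\|u\|_{L^2}$ and chaining with the Chebyshev step of the first paragraph yields \eqref{e:Yc-1}.

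The main obstacle is the Chebyshev-type step: $A_{\mathcal Z}$ is a sum of $2^{N_0}\approx h^{-1/4}$ operators with no a priori cancellation, so dominating its norm by that of $A_{c'}$ is not a formal manipulation. This is exactly what Lemma~\ref{l:almost-positivity} provides, and it is why we set up the anisotropic calculus $\Psi^{\comp}_{h,L_s,\rho/4}$ and the sharp G\aa rding inequality for it—so that although $A_{\mathcal Z}$ is built from many words, it remains an honest pseudodifferential operator whose symbol $a_{\mathcal Z}\le a_{c'}$ can be compared via positivity. Once this step is in hand, everything else is bookkeeping with Egorov's theorem and the elementary propagation estimate of Lemma~\ref{l:propagated-control}.
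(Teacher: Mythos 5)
Your proposal is correct and takes essentially the same route as the paper: the Chebyshev step via Lemma~\ref{l:almost-positivity} with $c=\alpha\mathbf 1_{\mathcal Z}$, $d=F$, the factorization of $A_F$ into $(A_1+A_2)^{N_0-1-j}A_1(j)(A_1+A_2)^j$ using that $A_1+A_2$ commutes with $U(t)$, and then Lemmas~\ref{l:A-0} and~\ref{l:propagated-control}. The only cosmetic differences are that you work with the rescaled weight $F/\alpha$ rather than with $A_F$ directly, bound $\|A_1+A_2\|_{L^2\to L^2}$ by $1+Ch$ via \eqref{e:basic-norm-bound} instead of by $1$ via the spectral theorem (both suffice since $N_0=\mathcal O(\log(1/h))$), and average over $j$ where the paper takes a maximum.
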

\begin{proof}
Recall the density function $F$ from \eqref{e:fn-density}. By definition, the indicator function $\mathbf 1_{\mathcal Z}$ satisfies $0\leq \alpha\mathbf 1_{\mathcal Z}\leq F\leq 1$. Thus by Lemma~\ref{l:almost-positivity}
(where $A_F$ is defined by~\eqref{e:op-symbol-weight})
\begin{equation}
  \label{e:Yc-int-1}
\alpha\|A_{\mathcal Z}u\|_{L^2}\leq \|A_Fu\|_{L^2}
+\mathcal O(h^{1/8})\|u\|_{L^2}.
\end{equation}
Using the definition~\eqref{e:fn-density} together with~\eqref{e:A-total}
we rewrite $A_F$ as follows:
$$
A_F={1\over N_0}\sum_{j=0}^{N_0-1}\sum_{\mathbf w\in \mathcal W(N_0),w_j=1}A_{\mathbf w}
={1\over N_0}\sum_{j=0}^{N_0-1}(A_1+A_2)^{N_0-1-j}A_1(j) (A_1+A_2)^j.
$$
Recall that $\|A_1+A_2\|_{L^2\to L^2}\leq 1$, see the proof of Lemma~\ref{l:A-0}.
Then
$$
\|A_Fu\|_{L^2}\leq \max_{0\leq j<N_0}\|A_1(j)(A_1+A_2)^ju\|_{L^2}.
$$
Since $\|A_1(j)\|_{L^2\to L^2}=\|A_1\|_{L^2\to L^2}\leq C$ and
$(A_1+A_2)^j u-u$ can be estimated by Lemma~\ref{l:A-0}, we get
$$
\|A_Fu\|_{L^2}\leq 
\max_{0\leq j< N_0}\|A_1(j)u\|_{L^2}
+C\|(-h^2\Delta-I)u\|_{L^2}.
$$
Estimating $A_1(j)u$ by Lemma~\ref{l:propagated-control}, we get
\begin{equation}
  \label{e:Yc-int-2}
\|A_Fu\|_{L^2}\leq C\|\Op_h(a)u\|_{L^2}+{C\log(1/h)\over h}\|(-h^2\Delta-I)u\|_{L^2}+\mathcal O(h)\|u\|_{L^2}.
\end{equation}
Combining~\eqref{e:Yc-int-1} and~\eqref{e:Yc-int-2}, we obtain~\eqref{e:Yc-1}.
\end{proof}

\medskip

We now finish the proof of Proposition \ref{l:nontrapped-estimate}. Recalling~\eqref{e:XY-def}, we write
$$
\mathcal Y=\bigsqcup_{\ell=1}^{8}\mathcal Y_\ell,\quad
\mathcal Y_\ell := \{\mathbf w^{(1)}\dots \mathbf w^{(8)}\mid
\mathbf w^{(\ell)}\in \mathcal Z,\quad
\mathbf w^{(\ell+1)},\dots,\mathbf w^{(8)}\in\mathcal W(N_0)\setminus\mathcal Z\}.
$$
Then
$
A_{\mathcal Y}=\sum_{\ell=1}^8 A_{\mathcal Y_\ell}$.
Let $\mathcal{Q}:=\mathcal{W}(N_0)\setminus\mathcal{Z}$, then
using~\eqref{e:A-total} we have the following factorization:
$$
A_{\mathcal Y_\ell}=A_{\mathcal Q}(7N_0)\cdots A_{\mathcal Q}(\ell N_0)
A_{\mathcal Z}\big((\ell-1)N_0\big)(A_1+A_2)^{(\ell-1)N_0}.
$$
By Lemma~\ref{l:small-time-good} we have $\|A_{\mathcal Q}\|_{L^2\to L^2},\|A_{\mathcal Z}\|_{L^2\to L^2}\leq C$.
Estimating $(A_1+A_2)^{(\ell-1)N_0}u-u$ by Lemma~\ref{l:A-0}, we get
\begin{equation}
  \label{e:bunny-1}
\|A_{\mathcal Y}u\|_{L^2}\leq C\sum_{\ell=1}^8 \big\|A_{\mathcal Z}\big((\ell-1)N_0\big)u\big\|_{L^2}
+C\|(-h^2\Delta-I)u\|_{L^2}.
\end{equation}
We have by Lemma~\ref{l:propagated}
\begin{equation}
  \label{e:bunny-2}
\big\|A_{\mathcal Z}\big((\ell-1)N_0\big)u\big\|_{L^2}\leq
\|A_{\mathcal Z} u\|_{L^2}
+{C\log(1/h)\over h}\|(-h^2\Delta-I)u\|_{L^2}.
\end{equation}
Using Lemma~\ref{l:Y-control-1} to bound $\|A_{\mathcal Z}u\|_{L^2}$
and combining~\eqref{e:bunny-1} with~\eqref{e:bunny-2}, we obtain~\eqref{e:nontrapped-estimate},
finishing the proof.



\section{Fractal uncertainty principle}
  \label{s:fup}

In this section we prove Proposition~\ref{l:ultimate-fup}
using the fractal uncertainty principle established in~\cite{fullgap}.

\subsection{Fractal uncertainty principle for porous sets in $\mathbb R$}
  \label{s:fup-1d}
    
We start by adapting the result of~\cite{fullgap} to the setting of porous sets,
by embedding them into Ahlfors--David regular sets
of some dimension $\delta<1$. Here we define porous sets as follows:
\begin{defi}
  \label{d:porous}
Let $\nu\in(0,1)$ and $0<\alpha_0\leq\alpha_1$. We say that a subset $\Omega$ of $\mathbb{R}$ is \textbf{$\nu$-porous on scales $\alpha_0$ to $\alpha_1$} if for each interval $I$ of size $|I|\in[\alpha_0,\alpha_1]$, there exists a subinterval $J\subset I$ with $|J|=\nu|I|$ such that $J\cap \Omega=\emptyset$.
\end{defi}
As for Ahlfors--David regular sets, we recall
\begin{defi}\cite[Definition~1.1]{fullgap}
  \label{d:adreg}
Let $\delta\in[0,1]$, $C_R\geq 1$, and $0\leq \alpha_0\leq\alpha_1$.
We say that a closed nonempty subset $X$ of $\mathbb{R}$ is \textbf{$\delta$-regular with constant $C_R$ on scales $\alpha_0$ to $\alpha_1$} if there exists a Borel measure $\mu_X$ on $\mathbb{R}$ such that:
\begin{enumerate}
\item
$\mu_X$ is supported on $X$: $\mu_X(\mathbb{R}\setminus X)=0$;
\item
for any interval $I$ with $\alpha_0\leq|I|\leq\alpha_1$, we have $\mu_X(I)\leq C_R|I|^\delta$;
\item
if in addition $I$ is centered at a point in $X$, then $\mu_X(I)\geq C_R^{-1}|I|^\delta$.
\end{enumerate}
\end{defi}
We use the following version of fractal uncertainty principle for $\delta$-regular sets.
Henceforth for $X\subset\mathbb R$ and $s>0$,
$X(s)=X+[-s,s]$ denotes the $s$-neighborhood of $X$.
\begin{prop}\cite[Proposition 4.3]{fullgap}.
	\label{t:fullgap}
Let $B=B(h):L^2(\mathbb{R})\to L^2(\mathbb{R})$ be defined as
\begin{equation}
  \label{e:b-form}
Bf(x)=h^{-1/2}\int e^{i\Phi(x,y)/h}b(x,y)f(y)dy
\end{equation}
where $\Phi\in C^\infty(U;\mathbb{R})$, $b\in C_0^\infty(U)$,
$U\subset\mathbb{R}^2$ is open, and $\partial_{xy}^2\Phi\neq0$ on $U$.

Let $0\leq\delta<1$ and $C_R\geq 1$. Then there exist $\beta>0$, $\rho\in(0,1)$ depending only on $\delta, C_R$ and there exists $C>0$ depending only on $\delta, C_R, b,\Phi$ such that for all $h\in(0,1)$
and all $X,Y\subset\mathbb R$ which are $\delta$-regular
with constant $C_R$ on scales 0 to 1,
\begin{equation}
	\label{e:ful-regular}
\|\indic_{X(h^{\rho})}B(h)\indic_{Y(h^\rho)}\|_{L^2(\mathbb R)\to L^2(\mathbb R)}\leq Ch^\beta.
\end{equation}
\end{prop}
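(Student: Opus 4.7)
The plan is to reduce the general oscillatory integral operator $B(h)$ to the canonical model, the semiclassical Fourier transform $\mathcal F_h f(\xi)=(2\pi h)^{-1/2}\int e^{-ix\xi/h}f(x)\,dx$, and then to attack the resulting fractal uncertainty principle for $\mathcal F_h$ on $\delta$-regular sets by a multi-scale argument. The non-degeneracy hypothesis $\partial^2_{xy}\Phi\neq 0$ is exactly the condition that $\Phi$ locally generates a symplectomorphism of $T^*\mathbb R$, so $B(h)$ is a compactly supported semiclassical Fourier integral operator quantizing that symplectomorphism.

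First I would factor $B(h)$ microlocally as $B(h)=A_1\,T_{\kappa}\,\mathcal F_h\, A_2 + \mathcal O(h^\infty)_{L^2\to L^2}$, where $A_1,A_2$ are compactly supported semiclassical pseudodifferential operators, $T_\kappa$ is the pullback by a smooth diffeomorphism coming from the $x$-factor of the symplectomorphism, and the $\mathcal F_h$ absorbs its elementary symplectic part. This is the standard normal-form reduction for $h$-Fourier integral operators, and it works under only the non-degeneracy of $\Phi$ and the compact support of $b$.

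Next I would transfer the sharp cutoffs $\indic_{X(h^\rho)}$ and $\indic_{Y(h^\rho)}$ past the pseudodifferential pieces. Replacing each by a smooth cutoff $\chi_X,\chi_Y$ equal to one on $X(h^\rho)$ or $Y(h^\rho)$, supported in the $2h^\rho$-neighborhood, with derivatives of order $k$ bounded by $C_k h^{-\rho k}$, puts them in the exotic class $S_\rho$. Since $T_\kappa$ is a smooth diffeomorphism, the pullback $T_\kappa^{-1}(X)$ is again $\delta$-regular with a comparable constant $C_R'$, so commuting $\chi_X,\chi_Y$ through $A_1$, $T_\kappa$, $A_2$ at the cost of $\mathcal O(h^{1-\rho})$ commutator errors reduces matters to estimating $\|\indic_{X'(h^\rho)}\mathcal F_h\indic_{Y'(h^\rho)}\|_{L^2\to L^2}$ for new $\delta$-regular sets $X',Y'$. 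These commutator errors are acceptable provided $\rho$ is chosen close enough to $1$ that $1-\rho>\beta$.

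The core remaining task is the FUP for $\mathcal F_h$ itself on $\delta$-regular sets with $\delta<1$. For $\delta<\tfrac12$ this already follows from pairing the dispersive bound $\|\mathcal F_h\|_{L^1\to L^\infty}\leq Ch^{-1/2}$ with the covering estimate $|X'(h^\rho)|\lesssim h^{(1-\delta)\rho}$. For general $\delta\in[1/2,1)$, which is the serious case, I would iterate a one-step submultiplicative gain at each dyadic scale between $h^\rho$ and $1$: using the tree structure of $X',Y'$ provided by Ahlfors--David regularity, at each scale one shows a gain strictly better than the trivial $L^2\to L^\infty$ bound by exploiting that the Fourier transform of a function supported in a $\delta$-regular set cannot concentrate on another $\delta$-regular set. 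Summing the $\sim\log(1/h)$ one-step gains produces the polynomial improvement $h^\beta$.

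The main obstacle is precisely this one-step gain at the critical scale when $\delta$ is close to $1$: it is what forces the use of a Beurling--Malliavin-type bound or a comparable complex-analytic input, and it is the novelty of \cite{fullgap} relative to the easier $\delta<1/2$ regime. The FIO-to-Fourier-transform reduction and the smoothing of cutoffs are routine semiclassical bookkeeping by comparison.
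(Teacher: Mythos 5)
The paper does not actually prove this proposition: it is quoted verbatim from \cite{fullgap} (Proposition~4.3 there), so your sketch has to be judged as a substitute for that citation, i.e.\ as a derivation of the general-phase statement from the linear-phase fractal uncertainty principle of \cite{fullgap}. The decisive step in your outline is the factorization $B(h)=A_1\,T_\kappa\,\mathcal F_h\,A_2+\mathcal O(h^\infty)_{L^2\to L^2}$ with $A_1,A_2$ pseudodifferential and $T_\kappa$ a pullback by a diffeomorphism, and this step fails: pseudodifferential factors have the identity as canonical relation, and the canonical relation of $T_\kappa\mathcal F_h$ sends $(y,\eta)$ to a point whose $x$-coordinate depends on $\eta$ alone, so such a factorization forces $\Phi(x,y)$ to be linear in $y$ with an $x$-dependent coefficient (and even if you allow unimodular factors $e^{i\psi_1(x)/h}$, $e^{i\psi_2(y)/h}$ and diffeomorphisms in both variables, it forces $\partial^2_{xy}\Phi(x,y)=f(x)g(y)$). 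A general phase with $\partial^2_{xy}\Phi\neq 0$ has no such structure; in particular $\Phi(y,y')=2w\log|y-y'|$, which is exactly the phase this proposition is applied to in \S\ref{s:fup-porous}, does not. So the reduction to the exact semiclassical Fourier transform collapses, and with it the whole argument: passing from the linear phase to a general nondegenerate phase is precisely the nontrivial content of the quoted proposition relative to the core theorem of \cite{fullgap}, not ``routine semiclassical bookkeeping.''

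For comparison, \cite{fullgap} handles the general phase not by an exact normal form but (roughly) by decomposing into windows at an intermediate $h$-dependent scale, on each window pulling out a unimodular factor $e^{i\Phi(x,y_J)/h}$, changing variables $x\mapsto\partial_y\Phi(x,y_J)$ to linearize, applying the linear-phase estimate at a rescaled semiclassical parameter, and then recombining the windows; it is in the recombination that the $\delta$-regularity of $X,Y$ on \emph{all} scales from $0$ to $1$ is used, since a naive sum over windows loses a power of the number of windows and would destroy the gain $h^\beta$. Your sketch never confronts this, because the (false) exact factorization hides it. Two further points: the logic ``choose $\rho$ close enough to $1$ so that $1-\rho>\beta$'' is backwards --- $\rho$ must be kept away from $1$ relative to $\beta$ (or $\beta$ shrunk after $\rho$ is fixed); and since your transformed sets are only regular with a constant depending on the diffeomorphism, hence on $\Phi$, your exponents $\beta,\rho$ would depend on $\Phi$, whereas the statement requires them to depend only on $\delta,C_R$. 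Invoking the Beurling--Malliavin-based linear-phase theorem of \cite{fullgap} as a black box is legitimate here, but then the only thing left to prove is exactly the reduction your sketch gets wrong.
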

Although porous sets need not be regular, we can always embed a porous set $\Omega$ in a neighborhood of a $\delta$-regular set $X$ with $\delta<1$. The set $X$ is constructed
by a Cantor-like procedure with some large base $L$, where
at $k$-th step we remove intervals of size $L^{-k-1}$ which do not intersect
$\Omega$.
\begin{lemm}
  \label{l:porous-regular}
For each $\nu\in (0, 1)$ there exist $\delta=\delta(\nu)\in (0,1)$
and $C_R=C_R(\nu)\geq 1$ such that the following holds.
Let $\Omega$ be a $\nu$-porous set on scales $\alpha_0$ to $1$.
Then there exists a set $X$ which is $\delta$-regular with constant $C_R$ on scales 0 to 1 such that $\Omega\subset X(\alpha_0)$.
\end{lemm}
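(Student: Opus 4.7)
The plan is to build $X$ by a Cantor-type construction whose base $L$ depends only on $\nu$, using porosity to delete one subinterval at each scale above $\alpha_0$ and then continuing self-similarly below $\alpha_0$ to guarantee regularity down to scale~$0$. Fix an integer $L \geq 2/\nu$ and set $\delta := \log(L-1)/\log L \in (0,1)$; the regularity constant $C_R$ will depend only on $L$.

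First, by covering $\mathbb{R}$ with a unit grid $\{[n,n+1]\}_{n \in \mathbb{Z}}$ and running the construction separately in each cell that meets $\Omega$ (then taking the union), I may assume $\Omega \subset I_0$ for a single unit interval~$I_0$. Build a nested sequence $X_0 \supset X_1 \supset \cdots \supset X_K$ with $X_k$ a finite union of closed intervals of length $L^{-k}$ and $\Omega \subset X_k$ at every stage, starting from $X_0 := I_0$. At step $k \leq K$, for each component $I$ of $X_{k-1}$ (which has length $L^{-(k-1)} \geq \alpha_0$), partition $I$ into $L$ equal subintervals of length $L^{-k}$; porosity supplies a gap $J \subset I$ with $|J| = \nu L^{-(k-1)} \geq 2 L^{-k}$ disjoint from $\Omega$, and since $|J|$ exceeds two grid spacings, $J$ must contain at least one grid subinterval entirely. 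Remove one such, keeping the remaining $L-1$. Stop at the least $K$ with $L^{-K} \leq \alpha_0$, so that $K \leq \log_L(1/\alpha_0) + 1$.

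Below scale $L^{-K}$, continue self-similarly: inside each surviving step-$K$ component, keep any fixed choice of $L-1$ of the $L$ subdivisions at every subsequent step. Let $X$ be the resulting compact set (unioned over unit cells). Assign mass $L^{-\delta k}$ to each surviving step-$k$ component; this is consistent because $(L-1) L^{-\delta k} = L^{-\delta(k-1)}$ by the definition of $\delta$. A routine counting argument, estimating the number of step-$k$ components that meet a given interval of length comparable to $L^{-k}$, then yields $C_R^{-1} |I|^\delta \leq \mu_X(I) \leq C_R |I|^\delta$ for intervals $I$ of length $\leq 1$ centered at points of $X$ (and the upper bound alone for arbitrary such $I$), with $C_R$ depending only on $L$, hence only on $\nu$. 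Finally $\Omega \subset X_K$, each component of $X_K$ has length $\leq \alpha_0$ and meets $X$ (the Cantor process never empties since $L-1 \geq 1$), so $\Omega \subset X(\alpha_0)$.

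The main tension is that above scale $\alpha_0$ the structure of $X$ is dictated by the possibly irregular porosity gaps in $\Omega$, whereas below $\alpha_0$ there is no porosity to use and yet the regularity required by Definition~\ref{d:adreg} must still hold down to scale~$0$. Reconciling the two regimes is what forces both the self-similar continuation below scale $\alpha_0$ and the choice of $L$ \emph{uniformly} in $\alpha_0$; this is exactly what allows $\delta$ and $C_R$ to depend only on $\nu$, as required.
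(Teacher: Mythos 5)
Your construction is essentially identical to the paper's: a Cantor-type set on a grid of base $L\approx 2/\nu$, where porosity supplies one deleted child per parent at every scale above $\alpha_0$, the branching continues self-similarly with ratio $L-1$ below $\alpha_0$, and the natural measure with $\delta=\log(L-1)/\log L$ gives regularity with $C_R$ depending only on $L$, hence only on $\nu$. The only differences are cosmetic (the paper runs the construction on the full grid over $\mathbb R$ rather than cell by cell, and writes out the two-sided measure estimates you label as routine counting, which indeed they are), so the proposal is correct.
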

\begin{proof}
Put $L:=\lceil 2/\nu\rceil\in\mathbb N$. 
We use the tree of intervals
$$
I_{m,k}=[mL^{-k},(m+1)L^{-k}],\quad 
m,k\in\mathbb{Z}.
$$
Let $k_0\geq 0$ be the unique integer such that $L^{-1-k_0}<\alpha_0\leq L^{-k_0}$.

Take $m,k$ with $0\leq k\leq k_0$. We claim that there exists $n=n(m,k)$
such that
\begin{equation}
  \label{e:pore-1}
I_{n,k+1}\subset I_{m,k},\quad
I_{n,k+1}\cap\Omega=\emptyset.
\end{equation}
Indeed, since $\Omega$ is $\nu$-porous, there exists
a subinterval $J\subset I_{m,k}$ such that $|J|=\nu|I_{m,k}|\geq 2L^{-k-1}$
and $J\cap \Omega=\emptyset$. Then one can find $n$ such that
$I_{n,k+1}\subset J$, and this value of~$n$ satisfies~\eqref{e:pore-1}.
When $k>k_0$, we put $n(m,k):=Lm$, so that
the condition $I_{n(m,k),k+1}\subset I_{m,k}$ still holds.

We now define the set $X$ as follows:
$$
X:=\bigcap_{k=0}^\infty \overline{X_k},\quad
X_k:=\mathbb{R}\setminus\bigcup_{m\in\mathbb{Z}} I_{n(m,k),k+1}.
$$
Note that for each $k\geq 1$ there exists a set $\mathcal M(k)\subset \mathbb Z$ such that
$$
\bigcap_{\ell=0}^{k-1}\overline{X_\ell}=\bigcup_{m\in\mathcal M(k)}I_{m,k}.
$$
We set $\mathcal M(0):=\mathbb Z$. Then for all $k\geq 0$ and $m$ we have
\begin{equation}
  \label{e:pore-2}
\#\{m'\in\mathcal M(k+1)\mid I_{m',k+1}\subset I_{m,k}\}=\begin{cases}
L-1,& m\in \mathcal M(k);\\
0,&\text{otherwise.}
\end{cases}
\end{equation}
We claim that $\Omega\subset X(\alpha_0)$. Indeed,
by~\eqref{e:pore-1} we have
$\Omega\subset X_k$ when $0\leq k\leq k_0$.
Take $x\in \Omega$, then $x$ lies in
$\bigcap_{k=0}^{k_0}\overline{X_k}$,
which implies that
$x\in I_{m,k_0+1}$ for some $m\in\mathcal M(k_0+1)$.
Since $L\geq 2$, by induction using~\eqref{e:pore-2}
there exists a sequence $(m_k\in \mathcal M(k))_{k\geq k_0+1}$
with $m_{k_0+1}=m$ and $I_{m_{k+1},k+1}\subset I_{m_k,k}$.
The intersection $\bigcap_k I_{m_k,k}$ consists of a single
point $y\in X$.
Since $x,y\in I_{m,k_0+1}$
we have $|x-y|\leq L^{-k_0-1}$, thus $x\in X(L^{-k_0-1})\subset X(\alpha_0)$ as required.

It remains to prove that $X$ is $\delta$-regular with some constant $C_R$ on scales
0 to~1, where we put
$$
\delta:={\log(L-1)\over\log L}\in (0,1).
$$
Let $\mu_X$ be the natural Cantor-like measure supported on $X$.
More precisely, by~\eqref{e:pore-2} there exists a
unique Borel measure $\mu_X$ on $\mathbb R$ satisfying
for all $m$ and $k\geq 0$
$$
\mu_X(I_{m,k})=
\begin{cases}
(L-1)^{-k}=L^{-\delta k},&
m\in \mathcal M(k);\\
0,& \text{otherwise}.
\end{cases}
$$
Take an interval $I$ of size $|I|\leq 1$, and fix
the unique integer $k\geq 0$ such that $L^{-k-1}<|I|\leq L^{-k}$.
Then there exists $m$ such that $I\subset I_{m,k}\cup I_{m+1,k}$. It follows that
\begin{equation}
  \label{e:pore-3}
\mu_X(I)\leq \mu_X(I_{m,k})+\mu_X(I_{m+1,k})\leq 2L^{-\delta k}
\leq 2L\cdot |I|^\delta.
\end{equation}
Next, assume that $I$ is an interval of size $|I|\leq 1$ centered
at a point $x\in X$. Fix the unique integer $k\geq 0$ such that
$2L^{-k-1}\leq |I|<2L^{-k}$
and choose $m\in\mathcal M(k+1)$ such that
$x\in I_{m,k+1}$. Then
$I_{m,k+1}\subset I$ and thus
\begin{equation}
  \label{e:pore-4}
\mu_X(I)\geq \mu_X(I_{m,k+1})=L^{-\delta(k+1)}\geq {|I|^\delta\over 2L}.
\end{equation}
Recalling Definition~\ref{d:adreg}, we see
that~\eqref{e:pore-3} and~\eqref{e:pore-4}
imply that $X$ is $\delta$-regular with constant
$C_R:=2L$ on scales 0 to~1. This finishes the proof.
\end{proof}
Combining Proposition~\ref{t:fullgap} and Lemma~\ref{l:porous-regular},
we obtain the following fractal uncertainty principle for $\nu$-porous sets:
\begin{prop}
	\label{l:fup-1d-porous}
Let $K>0$ and $\nu\in(0,1)$ be fixed
and $B(h):L^2(\mathbb R)\to L^2(\mathbb R)$ be as in Proposition~\ref{t:fullgap}.
Then there exist $\beta>0$, $\rho\in (0,1)$ depending only on $\nu$
and there exists $C$ depending only on $\nu,K,b,\Phi$ such that
for all $h\in (0,1)$ and all $\Omega_\pm\subset\mathbb R$
which are $\nu$-porous on scales $Kh^\rho$ to 1,
\begin{equation}
	\label{e:fup-porous}
\|\indic_{\Omega_-(Kh^\rho)}B(h)\indic_{\Omega_+(Kh^\rho)}\|_{L^2\to L^2}\leq Ch^\beta.
\end{equation}
\end{prop}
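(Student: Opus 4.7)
The plan is to deduce this porous version of the fractal uncertainty principle by embedding each porous set into a small neighborhood of an Ahlfors--David regular set via Lemma~\ref{l:porous-regular}, and then invoking the $\delta$-regular FUP of Proposition~\ref{t:fullgap}. Since the regularity exponent $\delta(\nu)<1$ and regularity constant $C_R(\nu)$ produced by Lemma~\ref{l:porous-regular} depend only on $\nu$, the same will be true of the exponents $\beta,\rho_0\in(0,1)$ output by Proposition~\ref{t:fullgap} for these values of $\delta,C_R$; this is what makes $\beta$ and $\rho$ in our statement depend only on $\nu$. To absorb the factor $K$ in the neighborhood later on, I would declare the exponent $\rho$ of our proposition to be any value in $(\rho_0,1)$ depending only on $\nu$, for instance $\rho:=(1+\rho_0)/2$.

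For the main regime of small $h$, namely $h\le h_0(\nu,K)$ chosen so that $2Kh^\rho\le h^{\rho_0}$ and $Kh^\rho\le 1$, I apply Lemma~\ref{l:porous-regular} with $\alpha_0:=Kh^\rho$ to each of $\Omega_\pm$ to obtain $\delta$-regular sets $X_\pm$ (with constant $C_R$, on scales $0$ to $1$) such that $\Omega_\pm\subset X_\pm(Kh^\rho)$. Enlarging both sides by $Kh^\rho$ and using the scale inequality yields
$$
\Omega_\pm(Kh^\rho)\subset X_\pm(2Kh^\rho)\subset X_\pm(h^{\rho_0}).
$$
Proposition~\ref{t:fullgap} applied to $X_-,X_+$ then gives $\|\indic_{X_-(h^{\rho_0})}B(h)\indic_{X_+(h^{\rho_0})}\|_{L^2\to L^2}\le C_0h^\beta$, and pointwise domination of indicator functions transfers this bound to $\indic_{\Omega_\pm(Kh^\rho)}$, since $\indic_{\Omega_\pm(Kh^\rho)}\le\indic_{X_\pm(h^{\rho_0})}$. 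For the remaining compact range $h_0\le h<1$, I would use the trivial bound $\|B(h)\|_{L^2\to L^2}\le C_B(b,\Phi)$ (e.g.\ by Schur's test on the oscillatory kernel) and observe that $C_B\le C_B h_0^{-\beta}\cdot h^\beta$, which can be absorbed into a constant depending on $\nu,K,b,\Phi$.

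The only real difficulty is the scale bookkeeping: Lemma~\ref{l:porous-regular} thickens $\Omega$ by the porosity scale $\alpha_0=Kh^\rho$, while Proposition~\ref{t:fullgap} requires the thickening of $X$ to match its own exponent $h^{\rho_0}$. The freedom to choose the output $\rho$ of our proposition strictly larger than the $\rho_0$ supplied by Proposition~\ref{t:fullgap} is precisely what makes the two scales compatible for all small $h$, at the (painless) cost of dispatching the truncated range $h\in[h_0,1)$ by a trivial operator norm bound. Once the scale matching is arranged, the remainder of the argument is essentially monotonicity of indicator functions composed with a black-box application of the $\delta$-regular FUP.
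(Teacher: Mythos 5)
Your proposal is correct and follows essentially the same route as the paper: embed the porous sets into neighborhoods of $\delta(\nu)$-regular sets via Lemma~\ref{l:porous-regular}, dominate $\indic_{\Omega_\pm(Kh^\rho)}$ by $\indic_{X_\pm(2Kh^\rho)}$, and apply Proposition~\ref{t:fullgap}, with the slight increase of $\rho$ over the exponent furnished by Proposition~\ref{t:fullgap} (plus a trivial bound on the compact range of $h$) absorbing the factor $2K$ — exactly the step the paper summarizes as ``increase $\rho$ slightly to absorb the constant $2K$.''
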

\begin{proof}
By Lemma~\ref{l:porous-regular}, there exist
$X,Y\subset\mathbb R$ which are $\delta$-regular with constant
$C_R$ on scales 0 to~1 for some $\delta=\delta(\nu)\in (0,1)$, $C_R=C_R(\nu)$
such that
$$
\Omega_-\subset X(Kh^\rho),\quad
\Omega_+\subset Y(Kh^\rho).
$$
Then
$$
\|\indic_{\Omega_-(Kh^\rho)}B(h)\indic_{\Omega_+(Kh^\rho)}\|_{L^2\to L^2}
\leq \|\indic_{X(2Kh^\rho)}B(h)\indic_{Y(2Kh^\rho)}\|_{L^2\to L^2}.
$$
It remains to apply Proposition~\ref{t:fullgap}
where we increase $\rho$ slightly to absorb the constant~$2K$.
\end{proof}


\subsection{Fractal uncertainty principle for porous sets in $T^* M$}
  \label{s:fup-porous}

We next use Proposition~\ref{l:fup-1d-porous} to prove a fractal uncertainty
principle for subsets of $T^*M\setminus 0$, where $M$ is a compact orientable hyperbolic surface.

Let $H_p,U_+,U_-,D$ be the frame on $T^*M\setminus 0$ defined in~\eqref{e:canonical-fields}.
For $v=(v_1,v_2,v_3)\in \mathbb R^3$, define the vector fields
$$
\mathcal V_\pm v=v_1H_p+v_2D+v_3 U_\pm.
$$
For $(x,\xi)\in T^*M\setminus 0$ and $\nu_0,\nu_1>0$, we define
the \emph{stable $(\nu_0,\nu_1)$ slice centered at $(x,\xi)$}
as follows:
$$
\Sigma^+_{\nu_0,\nu_1}(x,\xi):=
\{\exp(\mathcal V_- v)\exp(sU_+)(x,\xi)\colon |s|\leq \nu_0,\
|v|\leq \nu_1\}.
$$
Similarly define the \emph{unstable $(\nu_0,\nu_1)$ slice
centered at $(x,\xi)$:}
$$
\Sigma^-_{\nu_0,\nu_1}(x,\xi):=
\{\exp(\mathcal V_+ v)\exp(sU_-)(x,\xi)\colon |s|\leq \nu_0,\
|v|\leq \nu_1\}.
$$
\begin{figure}
\includegraphics{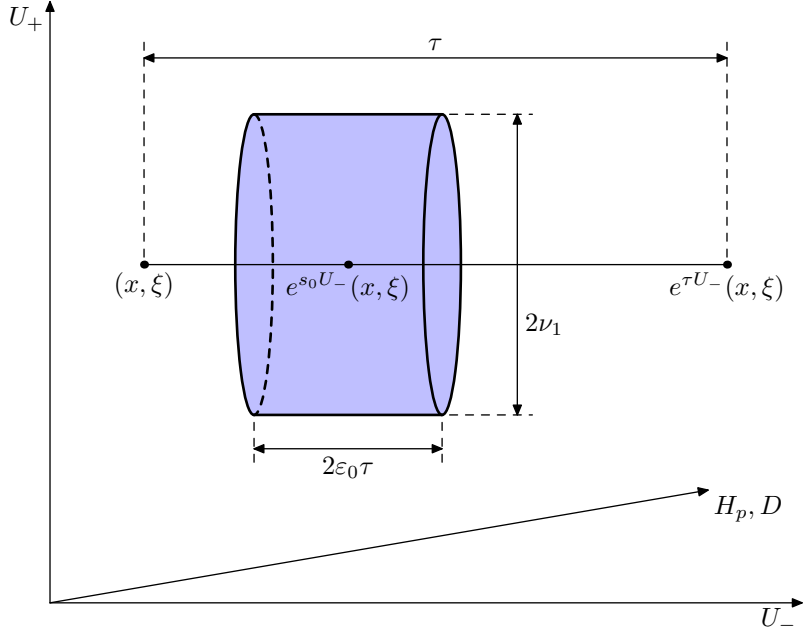}
\caption{An illustration of Definition~\ref{d:porous-TM} of an $(\varepsilon_0,\nu_1)$-porous
set along $U_-$. The blue cylinder is the unstable slice $\Sigma^-_{\varepsilon_0\tau,\nu_1}(e^{s_0U_-}(x,\xi))$.
(We ignore here the fact that $H_p,U_\pm,D$ do not commute and thus do not give rise to a coordinate system.)}
\label{f:porous}
\end{figure}
%
\begin{defi}
  \label{d:porous-TM}
Let
$$
Z\subset \{1/4\leq |\xi|_g\leq 4\}\subset T^*M\setminus 0
$$
be a closed set and fix
$$
\varepsilon_0,\nu_1,\tau_0\in (0,1].
$$
We say that
$Z$ is \textbf{$(\varepsilon_0,\nu_1)$-porous along
$U_\pm$ up to scale $\tau_0$}, if for each $(x,\xi)\in T^*M\setminus 0$
and each $\tau\in[\tau_0,1]$, there exists
$s_0\in [0,\tau]$ such that (see Figure~\ref{f:porous})
$$
\Sigma^\pm_{\varepsilon_0 \tau,\nu_1}(e^{s_0 U_\pm}(x,\xi))\cap Z=\emptyset.
$$
\end{defi}
Our fractal uncertainty principle for subsets of $T^*M\setminus 0$ is formulated
in terms of the $\Psi^{\comp}_{h,L,\rho}(T^*M\setminus 0)$ calculus introduced in~\S\ref{s:fancy-calculus}:
\begin{prop}
	\label{l:fup-porous}
Fix $\varepsilon_0,\nu_1\in (0,1]$. Then there exist $\beta>0$ and $\rho\in (0,1)$
depending only on $M,\varepsilon_0,\nu_1$ such that the following holds.
Suppose that 
$$
a_+\in S^{\comp}_{L_u,\rho}(T^*M\setminus 0),\quad
a_-\in S^{\comp}_{L_s,\rho}(T^*M\setminus 0),
$$
and $\supp a_\pm$ is $(\varepsilon_0,\nu_1)$-porous along $U_\pm$
up to scale $K_1h^\rho$ for some constant $K_1$.
Then for all $Q\in\Psi^0_h(M)$
\begin{equation}
  \label{e:fup-2}
\|\Op_h^{L_s}(a_-)Q\Op_h^{L_u}(a_+)\|_{L^2\to L^2}\leq Ch^\beta
\end{equation}
where $C$ depends only on $M,\varepsilon_0,\nu_1,K_1,Q$, and
some $S^{\comp}_{\bullet,\rho}$ seminorms
of $a_\pm$.
\end{prop}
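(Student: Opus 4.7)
The plan is to reduce Proposition~\ref{l:fup-porous} to the one-dimensional fractal uncertainty principle from Proposition~\ref{l:fup-1d-porous} via a microlocal partition of unity and a straightening of the foliations $L_s, L_u$. First, using a finite microlocal partition of unity inside the energy shell $\{1/4 \le |\xi|_g \le 4\}$, together with the pseudolocality of the $\Psi^{\comp}_{h, L, \rho}$ calculi, I reduce the bound~\eqref{e:fup-2} to the case where all of $a_+, a_-, Q$ are microlocalized in a small neighborhood of a fixed point $(x_0, \xi_0)$.

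In such a neighborhood, introduce local symplectic coordinates $(y_1, y_2, \eta_1, \eta_2)$ such that the weak unstable foliation $L_u$ is spanned by $\partial_{y_1}, \partial_{\eta_1}$ and the weak stable foliation $L_s$ is spanned by $\partial_{y_2}, \partial_{\eta_2}$; such coordinates exist because $L_s, L_u$ are transverse Lagrangian foliations, as in~\cite[\S4]{hgap}. Quantize the coordinate change by an elliptic semiclassical Fourier integral operator. After this conjugation, $\Op_h^{L_u}(a_+)$ becomes, modulo $\mathcal O(h^{1-\rho-})$, a pseudodifferential operator whose symbol $\widetilde a_+$ is smooth in $(y_1, \eta_1)$ but only $L^\infty$-bounded with $h^\rho$-controlled derivatives in $(y_2, \eta_2)$, and symmetrically $\Op_h^{L_s}(a_-)$ becomes smooth in $(y_2, \eta_2)$ and rough in $(y_1, \eta_1)$. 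A partial Fourier transform in $y_1$ then puts the composition $\Op_h^{L_s}(a_-) Q \Op_h^{L_u}(a_+)$ into the form $\indic_{\widetilde X} B(h) \indic_{\widetilde Y}$, up to bounded smoothing factors, where $B(h)$ is a one-dimensional oscillatory integral operator of the type~\eqref{e:b-form} acting in the variable $y_1$ with nondegenerate phase coming from the Fourier transform, parametrized smoothly by the transverse variables.

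At this point the porosity hypothesis is exactly what is needed: the $(\varepsilon_0, \nu_1)$-porosity of $\supp a_+$ along $U_+$ translates into $\nu$-porosity of the $y_1$-slices of $\{\widetilde a_+ \neq 0\}$ on scales $K h^\rho$ to $1$, with $\nu = \nu(\varepsilon_0, \nu_1) > 0$ and $K = K(\varepsilon_0, \nu_1, K_1)$, uniformly in the transverse parameters; the analogous statement holds for $a_-$. Apply Proposition~\ref{l:fup-1d-porous} slice-by-slice to obtain an $\mathcal O(h^\beta)$ bound on the 1D kernel uniform in the transverse parameters, then integrate via Schur's test or Minkowski's inequality. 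Summing the finitely many partition pieces and absorbing the $\mathcal O(h^{1-\rho-})$ calculus remainders into $h^\beta$ (by taking $\rho$ close enough to $1$ after $\beta$ is fixed by the 1D FUP) yields~\eqref{e:fup-2}.

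The main obstacle will be the Fourier integral reduction of the composition to a 1D oscillatory integral acting on transverse slices: one has to verify the nondegeneracy $\partial^2_{xy}\Phi \neq 0$ required by Proposition~\ref{t:fullgap} and track uniformity in the transverse variables. This is the step where the design of the anisotropic calculi $\Psi^{\comp}_{h, L_s, \rho}, \Psi^{\comp}_{h, L_u, \rho}$ pays off: the symbols factor cleanly into regular (along the foliation) and rough (transverse) directions, and the Egorov-type statement of Proposition~\ref{l:egorov-long} tracks how conjugation by the coordinate-change FIO preserves this factorization. The argument parallels closely the model reduction carried out in~\cite[\S3]{hgap}, with the only genuinely new input being the passage from the Ahlfors--David regular hypothesis used there to the porous hypothesis used here, which is furnished by Lemma~\ref{l:porous-regular}.
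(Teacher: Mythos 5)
There is a genuine gap at the central step of your reduction. You propose local symplectic coordinates $(y_1,y_2,\eta_1,\eta_2)$ in which $L_u=\Span(\partial_{y_1},\partial_{\eta_1})$ and $L_s=\Span(\partial_{y_2},\partial_{\eta_2})$, justified by "$L_s,L_u$ are transverse Lagrangian foliations." Neither half of that justification holds: by~\eqref{e:l-foliations}, $L_s=\Span(H_p,U_+)$ and $L_u=\Span(H_p,U_-)$ intersect along the flow direction, $L_s\cap L_u=\mathbb R H_p\neq 0$, so they are not transverse and no symplectomorphism can separate them into complementary subbundles; and the planes $\Span(\partial_{y_1},\partial_{\eta_1})$, $\Span(\partial_{y_2},\partial_{\eta_2})$ are \emph{symplectic}, not Lagrangian, so they cannot be the images of Lagrangian foliations under any symplectomorphism. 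Consequently the claimed normal form does not exist, and the subsequent "partial Fourier transform in $y_1$" producing a 1D operator of the form~\eqref{e:b-form} has no starting point. This is not a technicality one can absorb into remainders: the impossibility of simultaneously straightening $L_s$ and $L_u$ is exactly where the nontrivial analytic content of the proposition sits.

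What the paper does instead is use \emph{two different} exact symplectomorphisms $\varkappa^\pm$ (from~\cite[Lemma~4.7]{hgap}, built from the boundary maps $B_\mp$ and the Poisson kernel), each sending one of $L_u,L_s$ to the vertical foliation $L_0$ on $T^*(\mathbb R^+\times\mathbb S^1)$. Each factor is conjugated by the FIO quantizing its own straightening, and the one-dimensional model operator is not a Fourier transform but $B=\mathcal B_-\mathcal B_+'\in I^{\comp}_h(\varkappa^-\circ(\varkappa^+)^{-1})$, which by~\cite[Lemma~4.9]{hgap} has kernel $\sim|y-y'|^{2iw/h}$, i.e.\ phase $\Phi=2w\log|y-y'|$; the nondegeneracy $\partial^2_{yy'}\Phi\neq 0$ is a geometric fact about this hyperbolic kernel (checked in~\cite[\S4.3]{fullgap}), not something furnished by a Fourier transform. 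Relatedly, your porosity transfer is stated for "$y_1$-slices" uniformly in transverse parameters, with a slice-by-slice application of Proposition~\ref{l:fup-1d-porous} and Schur's test; in the correct setup one needs porosity of the \emph{projections} $\Omega_\pm$ of $\supp\tilde a_\pm$ onto the single boundary variable $y$ (Lemma~\ref{l:porosity-conversion}), and it is precisely the cylindrical extent $|v|\leq\nu_1$ in the slices $\Sigma^\pm_{\varepsilon_0\tau,\nu_1}$ of Definition~\ref{d:porous-TM} that makes the projected sets porous. The remaining elements of your outline (microlocal partition of unity, conversion of porosity, applying the 1D FUP, taking $\rho$ close to $1$) do match the paper's strategy, but as written the reduction to one dimension fails and would need to be replaced by the two-chart FIO argument.
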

In the rest of this subsection, we prove Proposition~\ref{l:fup-porous}.
We begin by straightening out weak
stable/unstable Lagrangian foliations similarly to~\cite[\S4.4]{hgap}.
Denote by~$\mathbb H^2$ the hyperbolic
plane; it is the universal cover of $M$. Let
$$
\varkappa^\pm:(x,\xi)\in T^*\mathbb H^2\setminus 0\mapsto (w,y,\theta,\eta)\in T^*(\mathbb R^+_w\times\mathbb S^1_y)
$$
be the exact symplectomorphisms constructed in~\cite[Lemma~4.7]{hgap} mapping $L_s,L_u$ to the vertical foliation $L_0$ on $T^\ast(\mathbb{R}^+\times\mathbb{S}^1)$:
$$
(\varkappa^+)_*L_u=(\varkappa^-)_*L_s=L_0=\ker(dw)\cap\ker(dy).
$$ 
More precisely, in the Poincar\'e disk model of $\mathbb H^2$, we have $w=p(x,\xi)=|\xi|_g$,
$$
y=B_\mp(x,\xi)
$$
is the limit of the projection to $\mathbb H^2$ of the geodesic $e^{tH_p}(x,\xi)$ as $t\to\mp\infty$ on the boundary $\mathbb S^1=\partial\mathbb H^2$,
$$
\theta=\pm\log\mathcal{P}(x,B_\mp(x,\xi)),
$$
where
$$
\mathcal{P}(x,y)=\frac{1-|x|^2}{|x-y|^2},\quad x\in\mathbb H^2,\ y\in\mathbb S^1
$$
is the Poisson kernel, and
$$
\eta=\pm G_\mp(x,\xi)=\pm p(x,\xi)\mathcal{G}(B_\mp(x,\xi),B_\pm(x,\xi))\in T_{B_\mp(x,\xi)}^*\mathbb S^1
$$
where (see~\cite[(1.19)]{hgap})
$$
\mathcal{G}(y,y')=\frac{y'-(y\cdot y')y}{1-y\cdot y'}\in
T^*_y\mathbb S^1\simeq T_y\mathbb S^1\subset \mathbb{R}^2,\quad
y,y'\in\mathbb S^1,\
y\neq y'
$$
is half the stereographic projection of $y'$ with base $y$.
See Figure~\ref{f:coord}.
\begin{figure}
\includegraphics{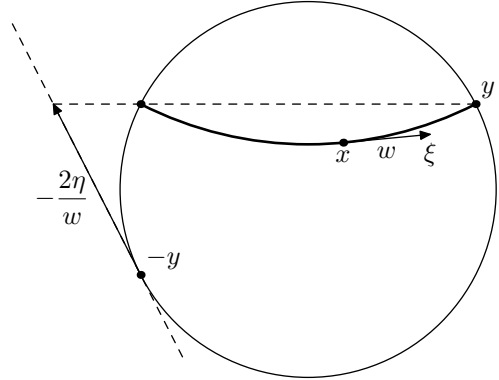}
\caption{The coordinates $(w,y,\theta,\eta)=\varkappa_-(x,\xi)$ in the
Poincar\'e disk model of $\mathbb H^2$. Here $w$ is the length of $\xi$,
$y$ is the limit of the geodesic starting from $(x,\xi)$ at $t\to\infty$,
$\theta$ is determined from the Poisson kernel $\mathcal P(x,y)$,
and $\eta$ is determined from the stereographic projection pictured.
By~\eqref{e:foliated-correctly} the value of~$y$ does not change
if we deform $(x,\xi)$ along the stable, flow, or dilation direction.}
\label{f:coord}
\end{figure}

It follows from the definition of~$B_\pm(x,\xi)$ that
\begin{equation}
  \label{e:foliated-correctly}
(\mathcal V_\pm v)B_\pm=0\quad\text{for all }v\in\mathbb R^3.
\end{equation}
By a microlocal partition of unity
and since $\supp a_\pm\subset \{1/4\leq |\xi|_g\leq 4\}$, we can assume that $\WFh(Q)\subset V$ where $V$ is a sufficiently small neighborhood of any given point $(x_0,\xi_0)\in T^*M\setminus 0$. We assume that
\begin{equation}
  \label{e:diam-V}
\diam(V)\leq \nu_1/C_0
\end{equation}
where $C_0$ is a large constant depending only on $M$ to be chosen
in Lemma~\ref{l:porosity-conversion} below.
We lift $V\subset T^*M\setminus 0$ to a subset of $T^*\mathbb{H}^2\setminus0$ and use $\varkappa^\pm$ to define the symplectomorphisms onto their images
$$
\varkappa^\pm_0:V\to T^*(\mathbb R^+\times\mathbb S^1).
$$
Note that we can make $\varkappa^\pm_0(V)$ contained in a compact subset of
$T^*(\mathbb R^+\times\mathbb S^1)$ which only depends on $M$.

We next quantize $\varkappa_0^\pm$ by Fourier integral operators
which conjugate $\Op_h^{L_s}(a_-)$ and $\Op_h^{L_u}(a_+)$ to operators
on $\mathbb R^+\times\mathbb S^1$.
Following~\cite[\S4.4, Proof of Theorem~3]{hgap}, we consider operators
$$
\mathcal B_\pm\in I_h^{\comp}(\varkappa_0^\pm),\quad
\mathcal B'_\pm\in I_h^{\comp}((\varkappa_0^\pm)^{-1})
$$
quantizing $\varkappa_0^\pm$ near $\varkappa_0^\pm(\WFh(Q))\times \WFh(Q)$ in the sense of~\eqref{e:fio-quantize}.
Consider the following operators on $L^2(\mathbb R^+\times\mathbb S^1)$:
$$
A_-:=\mathcal B_-\Op_h^{L_s}(a_-)\mathcal B_-',\quad
A_+:=\mathcal B_+Q\Op_h^{L_u}(a_+)\mathcal B_+',\quad
B=\mathcal B_-\mathcal B_+'.
$$
Then similarly to~\cite[(4.58)]{hgap}
$$
\Op_h^{L_s}(a_-)Q\Op_h^{L_u}(a_+)=\mathcal B'_-A_-BA_+\mathcal B_++\mathcal O(h^\infty)_{L^2\to L^2}.
$$
Moreover, by~\eqref{e:genius-3} there exist $\tilde a_\pm\in S^{\comp}_{L_0,\rho}(T^*(\mathbb R^+\times\mathbb S^1))$ such that
\begin{equation}
  \label{e:A-pm-def}
A_\pm=\Op_h^{L_0}(\tilde a_\pm)+\mathcal O(h^\infty)_{L^2\to L^2},\quad
\supp\tilde a_\pm\subset \varkappa_0^\pm(V\cap \supp a_\pm).
\end{equation}
Therefore in order to establish~\eqref{e:fup-2} it suffices to prove that
\begin{equation}
  \label{e:fup-3}
\|\Op_h^{L_0}(\tilde a_-)B\Op_h^{L_0}(\tilde a_+)\|_{L^2(\mathbb R^+\times\mathbb S^1)\to L^2(\mathbb R^+\times\mathbb S^1)}\leq Ch^\beta.
\end{equation}
Using the porosity of $\supp a_\pm$ along $U_\pm$, we get the following
one-dimensional porosity statement for projections of $\supp\tilde a_\pm$:
\begin{lemm}
  \label{l:porosity-conversion}
There exists a constant $C_0>0$ depending only on $M$ such that the following holds.
Define the projections of $\supp \tilde a_\pm$ onto the $y$ variable
$$
\Omega_\pm:=\big\{y\in\mathbb S^1\mid \exists w,\theta,\eta\colon (w,y,\theta,\eta)\in \supp\tilde a_\pm\big\}\ \subset\ \mathbb S^1.
$$
Then $\Omega_\pm$ (more precisely, their lifts to $\mathbb R$) are $\nu$-porous on scales $\alpha_0$ to~1 in the sense of Definition~\ref{d:porous}
where $\nu:=\varepsilon_0\nu_1/C_0$
and $\alpha_0:=C_0\nu_1^{-1}K_1 h^\rho$.
\end{lemm}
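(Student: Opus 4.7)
The plan is to exploit the foliation identity~\eqref{e:foliated-correctly} (applied with $\pm$ replaced by $\mp$), which says that $y = B_\mp$ is invariant under the $\mathcal V_\mp$-directions. Since $\Sigma^\pm_{\nu_0,\nu_1}(x,\xi)$ is swept out from the $U_\pm$-orbit segment $\{e^{sU_\pm}(x,\xi):|s|\leq\nu_0\}$ by applying $\exp(\mathcal V_\mp v)$ with $|v|\leq\nu_1$, the $y$-projection of this slice equals the $y$-projection of the segment.

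Next I record the uniform geometric bounds needed. The $U_\pm$-horocyclic flow is transverse to the foliation $\{B_\mp=\text{const}\}$, so $U_\pm B_\mp$ is smooth and nowhere vanishing; by compactness of $S^*M$ there exist constants $0<c_1\leq c_2$ depending only on $M$ with $c_1\leq|U_\pm B_\mp|\leq c_2$ on the cosphere shell $\{1/4\leq|\xi|_g\leq 4\}$. Let $c_M$ be an analogous upper bound on the rate of change of $(w,\theta,\eta)$ along the $U_\pm$-flow in $\varkappa^\pm$ coordinates.

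Given $J\subset\mathbb R$ with $|J|\in[\alpha_0,1]$, set
$$
\tau:=\min\!\bigl(|J|/(2c_2),\ \nu_1/(2C_0 c_M)\bigr);
$$
the hypothesis $\alpha_0=C_0\nu_1^{-1}K_1h^\rho$ together with $C_0$ large guarantees $\tau\geq K_1 h^\rho$. Pick $(x^{**},\xi^{**})$ on the cosphere shell with $B_\mp(x^{**},\xi^{**})$ at a left endpoint of $J$ and with the orientation of $U_\pm$ chosen so that $s\mapsto B_\mp(e^{sU_\pm}(x^{**},\xi^{**}))$ traces a subinterval of $J$ as $s$ runs over $[0,\tau]$. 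Applying the $(\varepsilon_0,\nu_1)$-porosity of $\supp a_\pm$ along $U_\pm$ at scale $\tau$ produces $s_0\in[0,\tau]$ with the slice $\Sigma^\pm_{\varepsilon_0\tau,\nu_1}(e^{s_0U_\pm}(x^{**},\xi^{**}))$ disjoint from $\supp a_\pm$. Its $y$-projection, after truncation to $J$, is a subinterval $J'\subset J$ of length at least $c_1\varepsilon_0\tau$. In either branch of the minimum defining $\tau$ this gives $|J'|\geq\nu|J|$ with $\nu=\varepsilon_0\nu_1/C_0$ after enlarging $C_0$ suitably.

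The technical heart is verifying $J'\cap\Omega_\pm=\emptyset$. Suppose $y'\in J'\cap\Omega_\pm$; there is $(x',\xi')\in V\cap\supp a_\pm$ with $B_\mp(x',\xi')=y'$. By monotonicity of $s\mapsto B_\mp(e^{sU_\pm}(x^{**},\xi^{**}))$ there exists $s$ with $|s-s_0|\leq\varepsilon_0\tau$ and $B_\mp(e^{sU_\pm}(x^{**},\xi^{**}))=y'$. The two points thus lie on the same leaf $\{B_\mp=y'\}$, which by~\eqref{e:foliated-correctly} is locally an orbit of $\exp(\mathcal V_\mp\cdot)$; hence $(x',\xi')=\exp(\mathcal V_\mp v)e^{sU_\pm}(x^{**},\xi^{**})$ for some $v\in\mathbb R^3$. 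The remaining estimate $|v|\leq\nu_1$ combines $\diam V\leq\nu_1/C_0$, the bound $c_M\tau\leq\nu_1/(2C_0)$ on the $(w,\theta,\eta)$-displacement of $e^{sU_\pm}(x^{**},\xi^{**})$ from $\varkappa^\pm(V)$, and a uniform local inverse bound for the parameterization of leaves by $\exp(\mathcal V_\mp\cdot)$; these yield $|v|\leq C'\nu_1/C_0\leq\nu_1$ once $C_0$ is large enough depending only on $M$. Then $(x',\xi')$ lies in the slice, contradicting its emptiness.
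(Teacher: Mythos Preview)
Your argument has a genuine gap in the step claiming $|v|\leq\nu_1$. You pick $(x^{**},\xi^{**})$ on the cosphere shell with prescribed $B_\mp$-value equal to the left endpoint of $J$, but you do not constrain $(x^{**},\xi^{**})$ to lie near $V$. The map $B_\mp$ is only defined after lifting to $T^*\mathbb H^2$, and on each leaf $\{B_\mp=y'\}$ the parametrization by $\exp(\mathcal V_\mp v)$ ranges over an unbounded set of $v$. So even if $(x',\xi')\in V\cap\supp a_\pm$ and $e^{sU_\pm}(x^{**},\xi^{**})$ share the same $B_\mp$-value, the connecting parameter $v$ can be arbitrarily large unless the two points are already close. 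Your bound $c_M\tau\leq\nu_1/(2C_0)$ only controls the drift of $e^{sU_\pm}(x^{**},\xi^{**})$ away from $(x^{**},\xi^{**})$, not away from $V$; it does not give the displacement from $\varkappa^\pm(V)$ that you invoke.

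There is also a scaling problem in the second branch of your minimum. When $\tau=\nu_1/(2C_0c_M)$ (independent of $|J|$), the resulting bound $|J'|\geq c_1\varepsilon_0\tau$ must dominate $\nu|J|=\varepsilon_0\nu_1|J|/C_0$. This reduces to $c_1/(2c_M)\geq|J|$, which fails for $|J|$ near $1$ whenever $c_1<2c_M$; enlarging $C_0$ does not help since it cancels from both sides.

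The paper resolves both issues simultaneously. It first disposes of the trivial case where the center $y_0$ of the interval lies outside the $y$-projection of an enlargement $W'$ of $\varkappa_0^\pm(V)$: then a subinterval of length $\nu|I|$ around $y_0$ already misses $\Omega_\pm$, since $\Omega_\pm$ sits inside the projection of $W$. In the remaining case one chooses the base point $(x_0,\xi_0)$ as a preimage in $V'$, hence at distance $\lesssim\nu_1$ from $V$. The key choice is $\tau:=C_1^{-3}\nu_1|I|$, proportional to both $\nu_1$ and $|I|$: the factor $\nu_1$ keeps the propagated point $(x_1,\xi_1)=e^{s_0U_\pm}(x_0,\xi_0)$ inside a neighborhood $V''$ of diameter $\lesssim\nu_1$, so that the local diffeomorphism $\Theta(s,v)=\varkappa_0^\pm(\exp(\mathcal V_\mp v)e^{sU_\pm}(x_1,\xi_1))$ onto $W''$ has domain of diameter $\leq\nu_1/10$, making the constraint $|v|\leq\nu_1$ automatic; and the proportionality to $|I|$ yields the correct porosity ratio $\nu=\varepsilon_0\nu_1/C_0$ uniformly over all $|I|\in[\alpha_0,1]$.
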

\begin{proof}
We show the porosity of $\Omega_+$, with the case of $\Omega_-$ handled
similarly. 
Denote by $C_1>0$ a large constant depending only on $M$
and put $C_0:=C_1^4$.

Denote $W:=\varkappa_0^+(V)$.
Let $V'$ be the $\nu_1/C_1^2$-neighborhood of $V$ and
$V''$ be the $\nu_1/C_1^2$-neighborhood of $V'$.
Lifting $V''$ to $T^*\mathbb H^2\setminus 0$ and 
using $\varkappa^+$, we extend $\varkappa^+_0$ to a symplectomorphism
$$
\varkappa^+_0:V'\to W',\
V''\to W''
$$
for some open sets $W',W''\subset T^*(\mathbb R^+\times\mathbb S^1)$.
Note that by~\eqref{e:diam-V}
\begin{equation}
  \label{e:diam-W}
\diam(W'')\leq {C_1\over 10}\diam(V'')\leq \nu_1/C_1.
\end{equation}
Moreover, the $\nu_1/C_1^3$-neighborhoods of $W,W'$ are contained
in $W',W''$ respectively.

Let $I\subset\mathbb S^1$ be an interval with $\alpha_0\leq |I|\leq 1$
centered at some $y_0\in\mathbb S^1$. Assume first that
the $y$-projection of $W'$ does not contain $y_0$. Then,
since $\supp\tilde a_+\subset W$ by~\eqref{e:A-pm-def}, we see that 
$y_0$ lies distance at least $\nu_1/C_0$ away from $\Omega_+$.
Thus the interval of size $\nu|I|$ centered at $y_0$
does not intersect $\Omega_+$ and verifies the porosity condition
in Definition~\ref{d:porous}.

We henceforth assume that the $y$-projection of $W'$ does contain $y_0$.
Choose $w_0,\theta_0,\eta_0$ such that
$(w_0,y_0,\theta_0,\eta_0)\in W'$.
Let $(x_0,\xi_0):=(\varkappa_0^+)^{-1}(w_0,y_0,\theta_0,\eta_0)\in V'$. Put
$$
\tau:=C_1^{-3}\nu_1|I|,\quad
K_1h^\rho\leq \tau\leq \nu_1/C_1^3\leq 1.
$$
Since $\supp a_+$ is $(\varepsilon_0,\nu_1)$-porous along $U_+$ up to scale $K_1h^\rho$,
there exists $s_0\in [0,\tau]$ such that
\begin{equation}
  \label{e:gold}
\Sigma^+_{\varepsilon_0\tau,\nu_1}(x_1,\xi_1)\cap \supp a_+=\emptyset\quad\text{where }
(x_1,\xi_1):=e^{s_0U_+}(x_0,\xi_0)\in V''.
\end{equation}
Since $C_1$ is large and $H_p,U_+,U_-,D$ form a frame,
we have a diffeomorphism
$$
\Theta:\widetilde U\to W'',\quad
(s,v)\mapsto \varkappa_0^+\big(\exp(\mathcal V_-v)\exp(sU_+)(x_1,\xi_1)\big)
$$
where $\widetilde U$ is some neighborhood of $(0,0)$ in $\mathbb R\times\mathbb R^3$.
By~\eqref{e:foliated-correctly} we see
that for $(w,y,\theta,\eta)=\Theta(s,v)$, the value of $y$ does not change
if we change $v$. Therefore the $y$-component of $\Theta(s,v)$
is equal to $\Theta_1(s)$ for some smooth diffeomorphism $\Theta_1$
defined on a subset of $\mathbb R$.

Applying $\varkappa_0^+$ to~\eqref{e:gold} and using~\eqref{e:A-pm-def}, we get
\begin{equation}
  \label{e:silver}
\{\Theta(s,v)\colon (s,v)\in\widetilde U,\ |s|\leq \varepsilon_0\tau,\
|v|\leq\nu_1\}\cap \supp \tilde a_+=\emptyset.
\end{equation}
However, by~\eqref{e:diam-W} we have
$$
\diam(\widetilde U)\leq \sqrt{C_1}\diam(W'')\leq {\nu_1\over 10}
$$
and thus the condition $|v|\leq\nu_1$ in~\eqref{e:silver} is not needed.
Therefore
\begin{equation}
  \label{e:silver2}
\Theta_1^{-1}(\Omega_+)\cap [-\varepsilon_0\tau,\varepsilon_0\tau]=\emptyset.
\end{equation}
Denote
$$
(w_1,y_1,\theta_1,\eta_1):=\Theta(0,0)=\varkappa_0^+(x_1,\xi_1)\in W''.
$$
and consider the interval
$$
J:=\big[y_1,y_1+\nu|I|\big],\quad
|J|=\nu|I|.
$$
We have $|y_0-y_1|\leq C_1s_0\leq C_1^{-2}\nu_1|I|$. Therefore $J\subset I$.
Moreover, since $\Theta_1(0)=y_1$
and $\diam(\Theta_1^{-1}(J))\leq C_1\nu|I|\leq \varepsilon_0\tau$, \eqref{e:silver2} implies that
$J\cap\Omega_+=\emptyset$. This gives the required porosity condition
on $\Omega_+$.
\end{proof}

\medskip

We are ready to finish the proof of Proposition~\ref{l:fup-porous}.
The operator $B=\mathcal B_-\mathcal B_+'$ lies
in $I^{\comp}_h\big(\varkappa^-\circ(\varkappa^+)^{-1}\big)$.
By~\cite[Lemma~4.9]{hgap} we can write
$$
B=A\widetilde{\mathcal B}_\chi+\mathcal O(h^\infty)_{L^2\to L^2}\quad\text{for some }A\in\Psi^{\comp}_h(\mathbb R^+\times\mathbb S^1)
$$
where $\chi\in C_0^\infty(\mathbb S^1_y\times\mathbb S^1_{y'})$, $\supp\chi\subset \{y\neq y'\}$,
and $\widetilde{\mathcal B}_\chi:L^2(\mathbb R^+\times\mathbb S^1)\to L^2(\mathbb R^+\times\mathbb S^1)$ is given by
$\widetilde{\mathcal B}_\chi v(w,y)=\mathcal B_{\chi,w}(v(w,\bullet))(y)$ where
$$
\mathcal B_{\chi,w} v(y)=
(2\pi h)^{-1/2}\int_{\mathbb S^1}\Big|{y-y'\over 2}\Big|^{2iw/h}\chi(y,y')v(y')\,dy',\quad
w>0.
$$
Here $|y-y'|$ denotes the Euclidean distance between $y,y'\in\mathbb S^1\subset\mathbb R^2$.

Since $\supp a_\pm\subset \{1/4\leq |\xi|_g\leq 4\}$, we have
$\supp \tilde a_\pm\subset \{1/4\leq w\leq 4\}$.
We can write
\begin{equation}
  \label{e:wallaby}
\Op_h^{L_0}(\tilde a_-)B\Op_h^{L_0}(\tilde a_+)
=\Op_h^{L_0}(a_-')\widetilde{\mathcal B}_{\chi}\Op_h^{L_0}(a_+')+\mathcal O(h^\infty)_{L^2\to L^2}
\end{equation}
where $a_\pm'\in S^{\comp}_{L_0,\rho}(T^*(\mathbb R^+\times\mathbb S^1))$
satisfy
\begin{equation}
  \label{e:a-pm-prime}
\supp a_\pm'\subset \{1/4\leq w\leq 4,\ y\in \Omega_\pm\}.
\end{equation}
In fact, $a_-'=\tilde a_-\#\sigma_h(A)$ and $a_+'=\tilde a_+$.
By~\cite[Lemma~3.3]{hgap} there exist symbols $\chi_\pm(y;h)$ such that
$$
|\partial^k_y \chi_\pm|\leq C_k h^{-\rho k},\quad
\supp(1-\chi_\pm)\cap \Omega_\pm=\emptyset,\quad
\supp\chi_\pm\subset \Omega_\pm(h^{\rho}).
$$
Take also $\chi_w(w)\in C_0^\infty((1/8,8))$ such that $\chi_w=1$ near $[1/4,4]$. Then it
follows from~\eqref{e:wallaby} and~\eqref{e:a-pm-prime} that
$$
\Op_h^{L_0}(\tilde a_-)B\Op_h^{L_0}(\tilde a_+)
=\Op_h^{L_0}(a_-')\chi_w\chi_-\widetilde{\mathcal B}_{\chi}\chi_+\Op_h^{L_0}(a_+')+\mathcal O(h^\infty)_{L^2\to L^2}.
$$
Therefore~\eqref{e:fup-3} follows from the estimate
$$
\|\chi_w\chi_-\widetilde{\mathcal B}_\chi\chi_+\|_{L^2(\mathbb R^+\times\mathbb S^1)\to L^2(\mathbb R^+\times\mathbb S^1)}\leq Ch^\beta
$$
which in turn follows from
\begin{equation}
  \label{e:fup-4}
\sup_{w\in [1/8,8]}\|\indic_{\Omega_-(h^\rho)}\mathcal B_{\chi,w}\indic_{\Omega_+(h^\rho)}\|_{L^2(\mathbb S^1)\to L^2(\mathbb S^1)}
\leq Ch^\beta.
\end{equation}
The operator $\mathcal B_{\chi,w}$ has the form~\eqref{e:b-form}
with $\Phi(y,y')=2w\log|y-y'|-w\log 4$, $y,y'\in\mathbb S^1$, $y\neq y'$,
where we pass from operators on $\mathbb S^1$ to operators on $\mathbb R$
by taking a partition of unity for $\chi$.
The mixed derivative $\partial^2_{yy'}\Phi$ does not vanish
as verified for instance in~\cite[\S4.3]{fullgap}.
Therefore~\eqref{e:fup-4} follows from the one-dimensional
fractal uncertainty principle, Proposition~\ref{l:fup-1d-porous},
where the porosity condition for $\Omega_\pm$
has been verified in Lemma~\ref{l:porosity-conversion}.

\subsection{Proof of Proposition~\ref{l:ultimate-fup}}
\label{s:pf-fup}

We now prove Proposition~\ref{l:ultimate-fup}. Take an arbitrary word $\mathbf{w}\in\mathcal{W}(2N_1)$
and write it as a concatenation of two words in $\mathcal{W}(N_1)$:
$$
\mathbf w=\mathbf w_+\mathbf w_-,\quad
\mathbf w_\pm\in \mathcal W(N_1).
$$
Define the operators
$$
\mathcal A_+:=A_{\mathbf w_+}(-N_1),\quad
\mathcal A_-:=A_{\mathbf w_-}.
$$
Then
\begin{equation}
  \label{e:endgame-1}
A_{\mathbf w}=
U(-N_1)\mathcal A_-\mathcal A_+U(N_1).
\end{equation}
We relabel the letters in the words $\mathbf{w}_\pm$ as follows:
$$
\mathbf w_+=w^+_{N_1}\dots w^+_1,\quad
\mathbf w_-=w^-_0\dots w^-_{N_1-1}
$$
and define the symbols $a_\pm$ by
$$
a_+=\prod_{j=1}^{N_1} (a_{w_j^+}\circ \varphi_{-j}),\quad
a_-=\prod_{j=0}^{N_1-1} (a_{w_j^-}\circ \varphi_j).
$$
Recall from~\eqref{e:op-word} that
$$
\begin{aligned}
\mathcal A_-&=A_{w^-_{N_1-1}}(N_1-1)A_{w^-_{N_1-2}}(N_1-2)\cdots A_{w_1^-}(1)A_{w_0^-}(0),\\
\mathcal A_+&=A_{w^+_1}(-1)A_{w^+_2}(-2)\cdots A_{w^+_{N_1-1}}(1-N_1)A_{w^+_{N_1}}(-N_1).
\end{aligned}
$$
\begin{lemm}
  \label{l:good-symbols}  
The symbols $a_\pm$ and the operators $\mathcal A_\pm$ satisfy 
$$
\begin{aligned}
a_+\in S^{\comp}_{L_u,\rho}(T^*M\setminus 0),&\quad
a_-\in S^{\comp}_{L_s,\rho}(T^*M\setminus 0);\\
\mathcal A_+=\Op_h^{L_u}(a_+)
+\mathcal O(h^{1-\rho-})_{L^2\to L^2},&\quad
\mathcal A_-=\Op_h^{L_s}(a_-)
+\mathcal O(h^{1-\rho-})_{L^2\to L^2}.
\end{aligned}
$$
\end{lemm}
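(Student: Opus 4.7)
The claim for $\mathcal A_-$ is literally \eqref{e:lwe-2} of Lemma~\ref{l:long-word-egorov} applied to the word $\mathbf w_-\in\mathcal W(N_1)$: by \eqref{e:op-word} we have $\mathcal A_-=A_{\mathbf w_-}$, and by \eqref{e:symbol-word} the corresponding product symbol is $a_-=a_{\mathbf w_-}$. No additional work is therefore needed on the stable side, and the task reduces to proving the unstable analogue for $\mathcal A_+$.

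The plan for $\mathcal A_+$ is to mirror the proof of Lemma~\ref{l:long-word-egorov} with three substitutions: the foliation $L_s$ becomes $L_u$, forward propagation $\varphi_j$ becomes backward propagation $\varphi_{-j}$, and consequently~\eqref{e:derbound-long-unstable} and~\eqref{e:long-egorov-2} replace~\eqref{e:derbound-long-stable} and~\eqref{e:long-egorov-1}. First, since $N_1\leq\rho\log(1/h)$ (up to the ceiling in the definition of $N_0$, which can be absorbed by slightly shrinking $\rho$), \eqref{e:derbound-long-unstable} combined with \eqref{e:A-symbol} gives
$$
a_{w_j^+}\circ\varphi_{-j}\in S^{\comp}_{L_u,\rho}(T^*M\setminus 0),\quad
\sup|a_{w_j^+}\circ\varphi_{-j}|\leq 1
$$
uniformly in $j\in\{1,\dots,N_1\}$. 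Invoking Lemma~\ref{l:long-product-symbols} with $a_j:=a_{w_j^+}\circ\varphi_{-j}$, and using \eqref{e:funny-epsilons} to translate between the two calculus conventions, then yields $a_+\in S^{\comp}_{L_u,\rho}(T^*M\setminus 0)$.

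For the operator identity, the long-time Egorov statement \eqref{e:long-egorov-2} gives
$$
A_{w_j^+}(-j)=\Op_h^{L_u}(a_{w_j^+}\circ\varphi_{-j})+\mathcal O(h^{1-\rho-})_{L^2\to L^2}
$$
uniformly in $j$, and then Lemma~\ref{l:long-product-operators} applied with $A_j:=A_{w_j^+}(-j)$ to the product $\mathcal A_+=A_{w_1^+}(-1)\cdots A_{w_{N_1}^+}(-N_1)$ produces $\mathcal A_+=\Op_h^{L_u}(a_+)+\mathcal O(h^{1-\rho-})_{L^2\to L^2}$, completing the proof.

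The only mildly delicate point is that one is composing $N_1\sim\log(1/h)$ symbol bounds and Egorov remainders; the overall remainder must stay of size $h^{1-\rho-}$ despite this long composition. This is precisely what Lemmas~\ref{l:long-product-symbols} and~\ref{l:long-product-operators} are engineered to ensure, so once they are invoked no further estimation is required.
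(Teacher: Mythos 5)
Your proof is correct and matches the paper's argument: the paper also reduces the stable case directly to Lemma~\ref{l:long-word-egorov} and obtains the unstable case by reversing the flow, i.e.\ by the same substitutions $L_s\to L_u$, $\varphi_j\to\varphi_{-j}$, \eqref{e:derbound-long-unstable} and~\eqref{e:long-egorov-2} in place of their stable counterparts, followed by Lemmas~\ref{l:long-product-symbols} and~\ref{l:long-product-operators}. You have merely written out the details that the paper leaves implicit, so no further comment is needed.
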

\begin{proof}
The statement for $a_-$ and $\mathcal A_-$ follows directly from Lemma \ref{l:long-word-egorov}. The statement for $a_+$ and $\mathcal A_+$ can be obtained similarly by reversing the flow $\varphi_t$ which exchanges the stable and unstable foliations.
\end{proof}
By Lemma~\ref{l:good-symbols} and~\eqref{e:endgame-1}, to show Proposition~\ref{l:ultimate-fup} it suffices to
prove the estimate
$$
\|\Op_h^{L_s}(a_-)\Op_h^{L_u}(a_+)\|_{L^2\to L^2}\leq Ch^\beta.
$$
The latter follows from the version of the fractal uncertainty principle in
Proposition~\ref{l:fup-porous} (with $Q=I$) where the porosity condition is established by the following
\begin{lemm}
  \label{l:porosity-verified}
There exist $\varepsilon_0,\nu_1,K_1>0$ depending only
on $M,\mathcal U_1,\mathcal U_2$ such that
the sets $\supp a_\pm$ are $(\varepsilon_0,\nu_1)$-porous 
up to scale $K_1h^\rho$ along $U_\pm$ in the sense of Definition~\ref{d:porous-TM}.
\end{lemm}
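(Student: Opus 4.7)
The plan is to combine the unique ergodicity of the horocyclic flows (Proposition~\ref{l:horocycle-unique}) with the hyperbolic expansion/contraction of $U_\pm$ under $\varphi_t$ coming from $[H_p,U_\pm]=\pm U_\pm$. I focus on $\supp a_-$ and porosity along $U_-$; the case of $\supp a_+$ along $U_+$ follows by reversing time, which swaps stable and unstable. Recall $a_-=\prod_{j=0}^{N_1-1}(a_{w_j^-}\circ\varphi_j)$ and $\supp a_i\cap\overline{\mathcal U_i}=\emptyset$ for $i=1,2$ by~\eqref{e:A-12-2}. Fix open sets $\mathcal U_i'$ with $\overline{\mathcal U_i'}\subset\mathcal U_i$ and let $T_0>0$ be the maximum of the four return times obtained from Proposition~\ref{l:horocycle-unique} applied to $\mathcal U_1',\mathcal U_2'$ for both $U_+$ and $U_-$. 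Pick $\delta>0$ so that the $\delta$-neighborhood of $\overline{\mathcal U_i'}$ (in a fixed smooth distance on $T^*M\setminus 0$) lies in $\mathcal U_i$.

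Given $(x,\xi)\in T^*M\setminus 0$ and $\tau\in[K_1h^\rho,1]$, I first discard the trivial case $|\xi|_g\notin[1/8,8]$: for $\nu_1$ small, any $(\varepsilon_0\tau,\nu_1)$-slice lies outside $\{1/4\le|\xi|_g\le 4\}\supset\supp a_-$, so any $s_0$ works. Otherwise normalize to $(\tilde x,\tilde\xi):=(x,\xi/|\xi|_g)\in S^*M$ and put $j:=\lceil\log(T_0/\tau)\rceil$, so $\tau e^j\in[T_0,eT_0]$; taking $K_1=e^2T_0$ guarantees $0\le j\le N_1-1$. The commutation $[H_p,U_-]=-U_-$ gives $\varphi_j\circ e^{sU_-}=e^{se^jU_-}\circ\varphi_j$, so as $s$ ranges over $[0,\tau]$ the point $\varphi_j(e^{sU_-}(\tilde x,\tilde\xi))$ traces a $U_-$-horocyclic arc of length $\tau e^j\ge T_0$ based at $\varphi_j(\tilde x,\tilde\xi)\in S^*M$. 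By choice of $T_0$ this arc meets $\mathcal U_{w_j^-}'$ at some point $q^\ast$; take $s_0\in[0,\tau]$ to be the corresponding parameter. Since $\mathcal U_i$ is conic, the same $s_0$ works for the unnormalized $(x,\xi)$.

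The remaining and most delicate point is that the entire slice $\Sigma^-_{\varepsilon_0\tau,\nu_1}(e^{s_0U_-}(\tilde x,\tilde\xi))$ maps under $\varphi_j$ into the $\delta$-neighborhood of $q^\ast$, and so into $\mathcal U_{w_j^-}$, and hence is disjoint from $\supp a_{w_j^-}\supset\supp a_-$ (as a set where the $j$-th factor vanishes). Using the commutation relations~\eqref{e:comm-rel-0} and~\eqref{e:comm-rel}, $\varphi_j$ conjugates $U_+\mapsto e^{-j}U_+$, $U_-\mapsto e^jU_-$, and preserves $H_p,D$; therefore the image of the slice has $U_-$-extent at most $2\varepsilon_0\tau e^j\le 2e\varepsilon_0 T_0$, $U_+$-extent at most $\nu_1e^{-j}\le\nu_1$, and $H_p,D$-extents at most $\nu_1$. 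Choosing $\varepsilon_0,\nu_1$ small in terms of $\delta,T_0,M$ (independent of $j,\tau,h,\mathbf w$) places this image inside the $\delta$-ball around $q^\ast$. The main obstacle in writing this out carefully is confirming this uniform-in-$j$ control: the Baker–Campbell–Hausdorff corrections arising because $H_p,U_\pm,D$ do not commute are of order $O(\nu_1^2+\varepsilon_0\nu_1)$ and can be absorbed, but one must check that the $e^{-j}$ contraction of $U_+$ really beats out any cross-terms. Once that is in hand, taking $K_1:=e^2T_0$ and $\varepsilon_0,\nu_1$ as above yields the stated porosity of $\supp a_-$ along $U_-$, and the time-reversed argument handles $\supp a_+$ along $U_+$.
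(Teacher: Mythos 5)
Your argument is essentially the paper's proof: use the recurrence time $T$ from Proposition~\ref{l:horocycle-unique} (extended conically via homogeneity of $U_\pm$), pick $j$ with $e^j\tau\asymp T$, locate $s_0\in[0,\tau]$ so that $\varphi_j(e^{s_0U_-}(x,\xi))\in\mathcal U'_{w_j^-}$, and conclude that the slice lies in $\varphi_{-j}(\mathcal U_{w_j^-})$, which is disjoint from $\supp a_-$ because $a_-$ contains the factor $a_{w_j^-}\circ\varphi_j$. The step you flag as the main obstacle is not actually an obstacle: since the slice is the \emph{ordered} composition $\exp(\mathcal V_+v)\exp(sU_-)$ and $(\varphi_j)_*H_p=H_p$, $(\varphi_j)_*D=D$, $(\varphi_j)_*U_\pm=e^{\mp j}U_\pm$, conjugation by $\varphi_j$ acts exactly on each exponential factor, giving the identity~\eqref{e:endgame-4}, namely $\varphi_j\big(\exp(\mathcal V_+v)\exp((s+s_0)U_-)(x,\xi)\big)=\exp(\mathcal V_+v')\exp(e^jsU_-)(q)$ with $v'=(v_1,v_2,e^{-j}v_3)$, $|v'|\leq|v|$, so there are no Baker--Campbell--Hausdorff cross-terms to absorb and uniformity in $j,\tau,h$ is automatic. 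The paper also bypasses your normalization to $S^*M$ and the $\delta$-ball argument by choosing $\nu_1$ so that $e^{\mathcal V_+v}e^{sU_-}(\mathcal U'_w)\subset\mathcal U_w$ whenever $\max(|v|,|s|)\leq\nu_1$ (a compactness statement on the conic sets) and setting $\varepsilon_0=\nu_1/(3T)$, $K_1=3T$, which is the only refinement your write-up still needs.
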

\begin{proof}
We show the porosity of $\supp a_-$. The porosity of $\supp a_+$
can be proved in the same way, by reversing the direction of the flow $\varphi_t$.

Recall from~\eqref{e:A-12-2} that
$\supp a_1\cap \mathcal U_1=\supp a_2\cap\mathcal U_2=\emptyset$
where $\mathcal U_1,\mathcal U_2$ are nonempty open conic
subsets of $T^*M\setminus 0$. Fix nonempty open conic subsets
$\mathcal U'_1,\mathcal U'_2\subset T^*M\setminus 0$ such that
$\mathcal U'_w\cap S^*M\Subset \mathcal U_w$, $w=1,2$.

By Proposition~\ref{l:horocycle-unique} and since the vector field $U_-$ is homogeneous,
there exists $T>1$ depending only on
$M,\mathcal  U'_1,\mathcal U'_2$ such that for each $(x,\xi)\in T^*M\setminus 0$,
there exist $s_w=s_w(x,\xi)\in [0,T]$, $w=1,2$,
such that
$$
\exp(s_wU_-)(x,\xi)\in \mathcal U'_w.
$$
We put $K_1:=3T$. Take arbitrary $(x,\xi)\in T^*M\setminus 0$
and $\tau$ such that $K_1 h^\rho\leq \tau\leq 1$.
Let $j$ be the unique integer such that $e^{j-1}\tau < T \leq e^j\tau$, then $1\leq j\leq N_1-1$.
Denote $w:=w_j^-\in \{1,2\}$, so that
\begin{equation}
  \label{e:endgame-2}
\supp a_-\cap \varphi_{-j}(\mathcal U_w)=\emptyset.
\end{equation}
Since $e^j\tau\geq T$, we see that there exists $s_0:=e^{-j}s_w(\varphi_j(x,\xi))\in [0,\tau]$ such that
\begin{equation}
  \label{e:endgame-3}
q:=\varphi_j\big(\exp(s_0U_-)(x,\xi)\big)=\exp(e^j s_0U_-)\big(\varphi_j(x,\xi)\big)\in \mathcal U'_w.
\end{equation}
Here we used the commutation relations~\eqref{e:comm-rel}.
For $v\in\mathbb R^3$ and $s\in\mathbb R$ we have
\begin{equation}
  \label{e:endgame-4}
\varphi_j\big(\exp(\mathcal V_+v)\exp((s+s_0)U_-)(x,\xi)\big)=
\exp(\mathcal V_+v')\exp(e^jsU_-)(q)
\end{equation}
where $v'=(v_1,v_2,e^{-j}v_3)$, in particular $|v'|\leq |v|$. Now, choose $\nu_1>0$ such that for $w=1,2$
$$
\max(|v|,|s|)\leq\nu_1\quad\Longrightarrow\quad
e^{\mathcal V_+ v}e^{sU_-}(\mathcal U'_w)\subset \mathcal U_w
$$
and put $\varepsilon_0:={\nu_1/(3T)}$. By~\eqref{e:endgame-3} and~\eqref{e:endgame-4}
we have $\Sigma^-_{\varepsilon_0\tau,\nu_1}(e^{s_0U_-}(x,\xi))\subset \varphi_{-j}(\mathcal U_w)$.
By~\eqref{e:endgame-2}
we then have $\Sigma^-_{\varepsilon_0\tau,\nu_1}(e^{s_0U_-}(x,\xi))\cap \supp a_-=\emptyset$. This finishes the proof of the porosity of $\supp a_-$.
\end{proof}

\appendix
\section*{Appendix: Calculus associated to a Lagrangian foliation}
\renewcommand{\theequation}{A.\arabic{equation}}
\refstepcounter{section}
\renewcommand{\thesection}{A}
\setcounter{equation}{0}

In this appendix, we establish properties of the $\Psi^{\comp}_{h,L,\rho}$ pseudodifferential calculus introduced in~\S\ref{s:fancy-calculus}.
We follow~\cite[\S3]{hgap}, indicating the changes necessary. We present the calculus
in the general setting of a Lagrangian foliation on an arbitrary manifold.

\subsection{Symbols}
  \label{s:appendix-symbols}

We assume that $M$ is a manifold, $U\subset T^*M$ is an open set,
and $L$ is a Lagrangian foliation, that is for each $(x,\xi)\in U$,
$L_{(x,\xi)}\subset T_{(x,\xi)}(T^*M)$ is a Lagrangian subspace
depending smoothly on $(x,\xi)$ and the family $(L_{(x,\xi)})_{(x,\xi)\in U}$ is integrable.
See~\cite[Definition~3.1]{hgap}.

To keep track of powers of $h$ in the remainders, we introduce a slightly
more general class of symbols than the one used in~\S\ref{s:fancy-calculus}.
Fix two parameters
$$
0\leq \rho<1,\quad
0\leq \rho'\leq {\rho\over 2},\quad
\rho+\rho' < 1.
$$
We say that an $h$-dependent symbol $a$
lies in the class $S^{\comp}_{L,\rho,\rho'}(U)$ if
\begin{enumerate}
\item $a(x,\xi;h)$ is smooth in $(x,\xi)\in U$,
defined for $0<h\leq 1$,
and supported in an $h$-independent compact
subset of $U$;
\item $a$ satisfies the derivative bounds
\begin{equation}
  \label{e:symbol-derby-general}
\sup_{x,\xi}|Y_1\ldots Y_mZ_1\ldots Z_k a(x,\xi;h)|\leq Ch^{-\rho k-\rho'm},\quad
0<h\leq 1
\end{equation}
for all vector fields
$Y_1,\dots,Y_m,Z_1,\dots,Z_k$ on $U$ such that
$Y_1,\dots,Y_m$ are tangent to $L$.
Here the constant $C$ depends on $Y_1,\dots,Y_m,Z_1,\dots,Z_k$
but does not depend on $h$.
\end{enumerate}
For $\rho'=0$ we obtain the class used in~\cite[\S3]{hgap}. Moreover, the
class $S^{\comp}_{L,\rho}(T^*M\setminus 0)$ introduced in~\S\ref{s:fancy-calculus} is given by
\begin{equation}
  \label{e:funny-epsilons}
S^{\comp}_{L,\rho}(T^*M\setminus 0)=\bigcap_{\varepsilon>0} S^{\comp}_{L,\rho+\varepsilon,\varepsilon}(T^*M\setminus 0).
\end{equation}
In the arguments below
(for instance, in~\eqref{e:beepy-2.5}, \eqref{e:beepy-3.5},
and~\eqref{e:gijoe-3})
we implicitly use the following version of Borel's Theorem
(see~\cite[Theorem~4.15]{e-z} for the standard version whose
proof applies here).
Let 
$a_j\in S^{\comp}_{L,\rho,\rho'}(U)$ be
a sequence of symbols with supports contained
in a compact subset of $U$ independent of $h,j$.
Take an increasing sequence 
of real numbers
$m_j\geq 0$, $m_j\to \infty$. Then there exists a symbol $a\in S^{\comp}_{L,\rho,\rho'}(U)$ which is
an asymptotic sum of $h^{m_j}a_j$ in the following sense:
$$
a-\sum_{j=0}^{J-1} h^{m_j}a_j\in h^{m_J}S^{\comp}_{L,\rho,\rho'}(U)\quad\text{for all }J
$$
and moreover $\supp a\subset \bigcup_j\supp a_j$.
Here $\supp a$ denotes the support of $a$ in the $(x,\xi)$ variables,
which is an $h$-dependent family of compact subsets of $U$.

We have the following bound for the product of many
symbols in~$S^{\comp}_{L,\rho,\rho'}(U)$:
\begin{lemm}
  \label{l:long-product-symbols}
Let $C$ be an arbitrary fixed constant and assume that $a_1,\dots,a_N\in S^{\comp}_{L,\rho,\rho'}(U)$, $1\leq N\leq C\log (1/h)$
are such that $\sup |a_j|\leq 1$ and
each $S^{\comp}_{L,\rho,\rho'}(U)$ seminorm of $a_j$ is bounded uniformly in $j$.
Then for all small $\varepsilon>0$
the product $a_1\cdots a_N$ lies in $S^{\comp}_{L,\rho+\varepsilon,\rho'+\varepsilon}(U)$.
\end{lemm}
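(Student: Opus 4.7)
The plan is to apply the Leibniz rule to the product $a_1\cdots a_N$ and exploit the fact that $N\leq\log(1/h)$ absorbs the combinatorial blow-up into the loss $h^{-\varepsilon(m+k)}$ allowed in the conclusion.

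First I would dispose of the easy parts: $\sup|a_1\cdots a_N|\leq 1$ since each $|a_j|\leq 1$, and $\supp(a_1\cdots a_N)\subset\supp a_1$ is contained in an $h$-independent compact subset of $U$. The content of the lemma is the derivative bound. Fix vector fields $Y_1,\dots,Y_m,Z_1,\dots,Z_k$ with $Y_i$ tangent to $L$. Iterating Leibniz,
$$
Y_1\cdots Y_m Z_1\cdots Z_k(a_1\cdots a_N)
=\sum_{\Phi}\prod_{j=1}^N\bigl(X^\Phi_{j,1}\cdots X^\Phi_{j,p_j(\Phi)}\,a_j\bigr),
$$
where $\Phi$ ranges over the $N^{m+k}$ ways to assign each of the operators $Y_1,\dots,Y_m,Z_1,\dots,Z_k$ to one of the $N$ factors. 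For a given $\Phi$, write $m_j(\Phi),k_j(\Phi)$ for the numbers of tangent resp.\ arbitrary fields assigned to $a_j$, so $\sum_j m_j=m$ and $\sum_j k_j=k$.

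Next I would bound each factor in the product. The derivatives on $a_j$ may not appear in the canonical order (tangent first, arbitrary second) required by \eqref{e:symbol-derby-general}, but one can reorder them modulo commutators: $XY=YX+[X,Y]$, and since $L$ is involutive, $[Y,Y']$ is again tangent to $L$ while $[Y,Z]$ is an arbitrary vector field. An induction on the total number of derivatives shows that any ordering of $m_j$ tangent and $k_j$ arbitrary fields applied to $a_j$ is bounded by $C\,h^{-\rho k_j-\rho' m_j}$, with $C$ uniform in $j$ by the uniform seminorm hypothesis. Factors with $p_j=0$ are bounded by $1$, and since at most $m+k$ factors carry any derivative, the product is at most $C^{m+k}\,h^{-\rho k-\rho' m}$.

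Finally I would sum: the number of terms is at most $N^{m+k}\leq(\log(1/h))^{m+k}$, and for any $\varepsilon>0$ we have $(\log(1/h))^{m+k}\leq C_{\varepsilon,m,k}\,h^{-\varepsilon(m+k)}$. This gives
$$
\bigl|Y_1\cdots Y_m Z_1\cdots Z_k(a_1\cdots a_N)\bigr|
\leq C_{\varepsilon,m,k}\,h^{-(\rho+\varepsilon)k-(\rho'+\varepsilon)m},
$$
which is precisely the $S^{\comp}_{L,\rho+\varepsilon,\rho'+\varepsilon}$ bound. The only mildly delicate point is the commutation argument in the second step, where one must verify that reorderings produce only lower-order terms of the same tangent/arbitrary type, using involutivity of $L$; everything else is bookkeeping.
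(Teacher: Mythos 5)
Your argument is correct and essentially identical to the paper's: Leibniz rule, a uniform bound $Ch^{-\rho k-\rho' m}$ on each of the $N^{m+k}$ summands (using that at most $m+k$ factors are differentiated and the seminorms are uniform in $j$), and $N^{m+k}\leq(\log(1/h))^{m+k}=\mathcal O(h^{-\varepsilon(m+k)})$. The only difference is your reordering-via-commutators step, which is sound but not actually needed: the Leibniz expansion assigns to each factor a subsequence of $Y_1,\dots,Y_m,Z_1,\dots,Z_k$ in its original relative order, so the tangent fields already appear outside the arbitrary ones, exactly in the form required by~\eqref{e:symbol-derby-general}.
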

\begin{proof}
We see immediately that $\sup |a_1\cdots a_N|\leq 1$ and $\supp (a_1\cdots a_N)\subset\supp a_1$
lies in an $h$-independent compact subset of $U$.
It remains to verify that for all vector fields $Y_1,\dots,Y_m,Z_1,\dots,Z_k$
on $U$ such that $Y_1,\dots,Y_m$ are tangent to $L$ and each $\varepsilon>0$
\begin{equation}
  \label{e:lps}
\sup_{x,\xi}|Y_1\ldots Y_mZ_1\ldots Z_k (a_1\cdots a_N)|=\mathcal O(h^{-\rho k-\rho'm-\varepsilon}).
\end{equation}
By the Leibniz rule, $Y_1\ldots Y_m Z_1\ldots Z_k (a_1\cdots a_N)$ is a sum
of $N^{m+k}=\mathcal O(h^{-\varepsilon})$ terms.
Each of these summands is a product of $N$ terms, of which at least $N-m-k$
have the form $a_j$ for some $j$, and the rest are obtained
by differentiating~$a_j$. Since the $S^{\comp}_{L,\rho,\rho'}(U)$ seminorms of $a_j$
are bounded uniformly in $j$, each summand is $\mathcal O(h^{-\rho k-\rho'm})$,
giving~\eqref{e:lps}.
\end{proof}

\subsection{Model calculus}
  \label{s:appendix-basic}

In~\S\S\ref{s:appendix-basic}--\ref{s:appendix-general}
we review the construction of the calculus in~\cite[\S\S3.2,3.3]{hgap},
explaining how to modify it to quantize symbols in $S^{\comp}_{L,\rho,\rho'}(U)$.

Following~\cite[\S3.2]{hgap}, we first consider the model case when $M=\mathbb R^n$,
$U=T^*\mathbb R^n$, and $L=L_0$ is the vertical foliation:
$$
L_0=\Span(\partial_{\eta_1},\dots,\partial_{\eta_n}),
$$
where $(y,\eta)$ are the standard coordinates on $T^*\mathbb R^n$.
Symbols in $S^{\comp}_{L_0,\rho,\rho'}(T^*\mathbb R^n)$ satisfy the derivative bounds
\begin{equation}
  \label{e:derby0}
\sup_{y,\eta}|\partial^\alpha_y \partial^\beta_\eta a(y,\eta;h)|\leq C_{\alpha\beta}
h^{-\rho|\alpha|-\rho'|\beta|}.
\end{equation}
For these symbols we use the standard quantization,
\begin{equation}
  \label{e:op-h}
\Op_h(a)f(y)=(2\pi h)^{-n}\int_{\mathbb R^{2n}}e^{{i\over h}(y-y')\cdot \eta}a(y,\eta)f(y')\,dy'd\eta.
\end{equation}
Other quantizations such as the Weyl quantization are likely to produce the same class
of operators, however the standard quantization is convenient for proving invariance
under conjugation by Fourier integral operators, see~\cite[Lemma~3.10]{hgap}.

The standard quantization has the following properties:
\begin{enumerate}
\item for $a\in S^{\comp}_{L_0,\rho,\rho'}(T^*\mathbb R^n)$, the operator
$\Op_h(a):L^2(\mathbb R^n)\to L^2(\mathbb R^n)$ is bounded uniformly in $h$;
\item for $a,b\in S^{\comp}_{L_0,\rho,\rho'}(T^*\mathbb R^n)$, we have
for some~$a\#b\in S^{\comp}_{L_0,\rho,\rho'}(T^*\mathbb R^n)$
\begin{align}
  \label{e:beepy-1}
\Op_h(a)\Op_h(b)&=\Op_h(a\#b)+\mathcal O(h^\infty)_{L^2\to L^2},\\
  \label{e:beepy-2}
a\#b&=ab+\mathcal O(h^{1-\rho-\rho'})_{S^{\comp}_{L_0,\rho,\rho'}(T^*\mathbb R^n)},\\
  \label{e:beepy-2.5}
\supp(a\# b)&\subset \supp a\cap\supp b;
\end{align}
\item for $a\in S^{\comp}_{L_0,\rho,\rho'}(T^*\mathbb R^n)$, we have
for some $a^*\in S^{\comp}_{L_0,\rho,\rho'}(T^*\mathbb R^n)$
\begin{align}
  \label{e:beepy-3}
\Op_h(a)^*&=\Op_h(a^*)+\mathcal O(h^\infty)_{L^2\to L^2},\\
  \label{e:beepy-3.25}
a^*&=\overline a+\mathcal O(h^{1-\rho-\rho'})_{S^{\comp}_{L_0,\rho,\rho'}(T^*\mathbb R^n)},\\
  \label{e:beepy-3.5}
\supp a^*&\subset \supp a;
\end{align}
\item if one of the symbols $a,b$ lies in $S^{\comp}_{L_0,\rho,\rho'}(T^*\mathbb R^n)$
and the other one has all derivatives bounded uniformly in $h$
(it does not have to be compactly supported), then~\eqref{e:beepy-1},
\eqref{e:beepy-2.5} hold and
\begin{equation}
  \label{e:beepy-4}
a\# b=ab+\mathcal O(h^{1-\rho})_{S^{\comp}_{L_0,\rho,\rho'}(T^*\mathbb R^n)};
\end{equation}
\item if $a=a_0+ha_1$ where
$a_0,a_1$ have all derivatives bounded uniformly in $h$, 
$b=b_0+h^{1-\rho}b_1$ where $b_0,b_1\in S^{\comp}_{L_0,\rho,\rho'}(T^*\mathbb R^n)$,
and $\partial_\eta a_0=0$ near $\supp b_0\cup\supp b_1$,
then
\begin{equation}
  \label{e:beepy-5}
a\# b-b\#a=-ih\{a_0,b_0\}+\mathcal O(h^{2-\rho-\rho'})_{S^{\comp}_{L_0,\rho,\rho'}(T^*\mathbb R^n)}.\end{equation}
\end{enumerate}
The proofs are similar to those of~\cite[Lemmas~3.7, 3.8]{hgap}. More precisely,
we use the unitary rescaling operator
$$
T_{\rho,\rho'}:L^2(\mathbb R^n)\to L^2(\mathbb R^n),\quad
T_{\rho,\rho'}u(y)=h^{(\rho-\rho')n\over 4}u(h^{\rho-\rho'\over 2}y),
$$
to conjugate $\Op_h(a)$ as follows:
$$
T_{\rho,\rho'}\Op_h(a)T_{\rho,\rho'}^{-1}=\Op_h(a_{\rho,\rho'}),\quad
a_{\rho,\rho'}(y,\eta;h):=a(h^{\rho-\rho'\over 2}y,h^{\rho'-\rho\over 2}\eta;h).
$$
If $a$ satisfies~\eqref{e:derby0}, then the rescaled symbol $a_{\rho,\rho'}$ satisfies
$$
\sup_{y,\eta}|\partial^\alpha_y \partial^\beta_\eta a_{\rho,\rho'}(y,\eta;h)|\leq C_{\alpha\beta}
h^{-{\rho+\rho'\over 2}(|\alpha|+|\beta|)},
$$
that is $a_{\rho,\rho'}\in S_{\rho+\rho'\over 2}$ where the classes $S_\delta$,
$0\leq\delta\leq 1/2$, are defined in~\cite[(4.4.5)]{e-z}. Then the statements~(1)--(3) above
follow from the standard properties of the $S_\delta$ calculus, see~\cite[Theorems~4.23(ii),
4.14, and~4.17]{e-z}. The statements~(4)--(5) follow by an examination of the terms
in the asymptotic expansion for $a\# b$.

The model calculus satisfies the following version of sharp G\r arding inequality:
\begin{lemm}
  \label{l:garding-model}
Assume that $a\in S^{\comp}_{L_0,\rho,\rho'}(T^*\mathbb R^n)$ satisfies
$\Re a\geq 0$ everywhere. Then there exists a constant $C$ depending on $a$
such that for all $h$ and all $u\in L^2(\mathbb R^n)$
\begin{equation}
  \label{e:garding-model}
\Re\langle \Op_h(a)u, u\rangle_{L^2}\geq -Ch^{1-\rho-\rho'}\|u\|_{L^2}^2.
\end{equation}
\end{lemm}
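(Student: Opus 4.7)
The plan is to reduce Lemma~\ref{l:garding-model} to the classical sharp G\r arding inequality for the $S_\delta$ calculus via the same rescaling trick used in the paragraph just before the lemma statement. Recall that conjugation by the unitary operator $T_{\rho,\rho'}$ gives
\[
T_{\rho,\rho'}\Op_h(a)T_{\rho,\rho'}^{-1}=\Op_h(a_{\rho,\rho'}),\qquad a_{\rho,\rho'}(y,\eta;h)=a(h^{(\rho-\rho')/2}y,\,h^{(\rho'-\rho)/2}\eta;h),
\]
and that, as already observed, $a_{\rho,\rho'}\in S_\delta(T^*\mathbb R^n)$ with $\delta:=(\rho+\rho')/2$. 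Since we have assumed $\rho+\rho'<1$, we have $\delta<1/2$, which is exactly the range in which the standard sharp G\r arding inequality applies.

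The steps would be the following. First, observe that the rescaling is a pointwise change of variables in $(y,\eta)$, so $\Re a\geq 0$ everywhere immediately gives $\Re a_{\rho,\rho'}\geq 0$ everywhere. Second, apply the standard sharp G\r arding inequality for the $S_\delta$ calculus (see~\cite[Theorem~4.32]{e-z} for the case $\delta=0$ and the analogous statement for $0\leq\delta<1/2$): for any symbol $b\in S_\delta$ with $\Re b\geq 0$ one has
\[
\Re\langle \Op_h(b)v,v\rangle_{L^2}\geq -Ch^{1-2\delta}\|v\|_{L^2}^2,
\]
with $C$ depending on finitely many $S_\delta$ seminorms of $b$. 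Applying this to $b=a_{\rho,\rho'}$ and noting that the relevant $S_\delta$ seminorms of $a_{\rho,\rho'}$ are bounded in terms of the $S^{\comp}_{L_0,\rho,\rho'}$ seminorms of $a$ yields
\[
\Re\langle\Op_h(a_{\rho,\rho'})v,v\rangle_{L^2}\geq -Ch^{1-\rho-\rho'}\|v\|_{L^2}^2.
\]
Third, set $v:=T_{\rho,\rho'}u$ and use unitarity of $T_{\rho,\rho'}$ to conclude
\[
\Re\langle\Op_h(a)u,u\rangle_{L^2}=\Re\langle\Op_h(a_{\rho,\rho'})v,v\rangle_{L^2}\geq -Ch^{1-\rho-\rho'}\|u\|_{L^2}^2,
\]
which is~\eqref{e:garding-model}.

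There is no serious obstacle in this argument; the only thing to verify carefully is that the constant in the $S_\delta$ sharp G\r arding inequality really does depend only on finitely many seminorms (so that compact support and $h$-independence of the support of $a$ translate into the needed uniform bound on the constant $C$), and that $\delta=(\rho+\rho')/2<1/2$ keeps us inside the regime where sharp G\r arding is valid. Both points are standard, so the lemma follows directly from the rescaling identity together with the off-the-shelf $S_\delta$ sharp G\r arding inequality.
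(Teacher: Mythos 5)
Your proof is correct and is essentially the paper's argument: both reduce the statement to the classical sharp G\r arding inequality by a unitary rescaling in $(y,\eta)$, under which nonnegativity is preserved and the $S^{\comp}_{L_0,\rho,\rho'}$ seminorms control the rescaled seminorms. The only cosmetic difference is that the paper rescales anisotropically ($y\mapsto h^{\rho}y$, $\eta\mapsto h^{\rho'}\eta$), producing a symbol with uniformly bounded derivatives at the new semiclassical parameter $\tilde h=h^{1-\rho-\rho'}$ so that \cite[Theorem~4.32]{e-z} applies verbatim, whereas you stop at $S_\delta$ with $\delta=(\rho+\rho')/2<1/2$ and invoke the $S_\delta$ sharp G\r arding inequality as a black box --- a statement which is itself standard but is usually proved by exactly that further rescaling.
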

\begin{proof}
We take the following rescaled versions of $u$, $a$, and $h$:
$$
\tilde u(y):=h^{\rho n/2}u(h^\rho y),\quad
\tilde a(y,\eta;h):=a(h^\rho y, h^{\rho'} \eta;h),\quad
\tilde h:=h^{1-\rho-\rho'}.
$$
Note that $\|\tilde u\|_{L^2}=\|u\|_{L^2}$.
We have
$$
\Re\langle \Op_h(a)u,u\rangle_{L^2}=\Re \langle \Op_{\tilde h}(\tilde a)\tilde u,\tilde u\rangle_{L^2}.
$$
Now~\eqref{e:derby0} implies that all derivatives of $\tilde a$ are bounded uniformly in $h$.
It remains to apply the standard sharp G\r arding inequality~\cite[Theorem~4.32]{e-z}.
\end{proof}

\subsection{Fourier integral operators}
\label{s:fios}

To pass from the model case to the general case, we study the conjugation
of operators in the model calculus by Fourier integral operators.
We briefly review the notation for Fourier integral operators,
referring the reader to~\cite[\S2.2]{hgap} and the references there
for details:
\begin{itemize}
\item Let $M_1,M_2$ be manifolds of the same dimension.
An \emph{exact symplectomorphism} is a diffeomorphism $\varkappa:U_2\to U_1$,
where $U_j\subset T^* M_j$ are open sets, such that
$\varkappa^*(\xi\,dx)-\eta\,dy$ is an exact 1-form.
Here $\xi\,dx$ and $\eta\,dy$ are the canonical 1-forms
on $T^*M_1$ and $T^*M_2$ respectively.
We fix an antiderivative for $\varkappa^*(\xi\,dx)-\eta\,dy$.
\item For an exact symplectomorphism $\varkappa$ (with a fixed antiderivative),
denote by $I^{\comp}_h(\varkappa)$ the class of compactly supported%
\footnote{An operator is called compactly supported if its Schwartz kernel
is compactly supported.}
and compactly microlocalized semiclassical Fourier integral operators
associated to $\varkappa$. These operators
are bounded $L^2(M_2)\to L^2(M_1)$ uniformly in $h$.
\item Let $\varkappa:U_2\to U_1$ be an exact symplectomorphism
and $\varkappa^{-1}$ denote its inverse. For any $B\in I^{\comp}_h(\varkappa)$,
$B'\in I^{\comp}_h(\varkappa^{-1})$, the operators
$BB'$ and $B'B$ are pseudodifferential in the class $\Psi^{\comp}_h$
and~\cite[(2.12)]{hgap}
\begin{equation}
  \label{e:egorov-fio}
\sigma_h(B'B)=\sigma_h(BB')\circ\varkappa.
\end{equation}
If $V_1\subset U_1$, $V_2\subset U_2$ are compact sets such that
$\varkappa(V_2)=V_1$, then we say that $B,B'$ \emph{quantize $\varkappa$
near $V_1\times V_2$} if
\begin{equation}
  \label{e:fio-quantize}
\begin{aligned}
BB'&=I+\mathcal O(h^\infty)\quad\text{microlocally near }V_1,\\
B'B&=I+\mathcal O(h^\infty)\quad\text{microlocally near }V_2.
\end{aligned}
\end{equation}
\end{itemize}
The quantization studied in~\S\ref{s:appendix-basic} is invariant
under conjugation by Fourier integral operators whose underlying symplectomorphisms preserve $L_0$:
\begin{lemm}
  \label{l:gauge-invariance}
Assume that $\varkappa:U_2\to U_1$, $U_j\subset T^*\mathbb R^n$,
is an exact symplectomorphism such that $\varkappa_*(L_0)=L_0$
and take $B\in I^{\comp}_h(\varkappa)$, $B'\in I^{\comp}_h(\varkappa^{-1})$.
Then for each $a\in S^{\comp}_{L_0,\rho,\rho'}(T^*\mathbb R^n)$
there exists $b\in S^{\comp}_{L_0,\rho,\rho'}(T^*\mathbb R^n)$
such that
\begin{align}
  \label{e:gijoe-1}
B'\Op_h(a)B&=\Op_h(b)+\mathcal O(h^\infty)_{L^2\to L^2},\\
  \label{e:gijoe-2}
b&=(a\circ\varkappa)\sigma_h(B'B)+\mathcal O(h^{1-\rho})_{S^{\comp}_{L_0,\rho,\rho'}(T^*\mathbb R^n)},\\
  \label{e:gijoe-3}
\supp b&\subset \varkappa^{-1}(\supp a).
\end{align}
\end{lemm}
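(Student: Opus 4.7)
The plan is to reduce the statement to Egorov's theorem in the standard semiclassical $S_\delta$ calculus, with $\delta=(\rho+\rho')/2<1/2$, via the same rescaling argument used in the proof of Lemma~\ref{l:garding-model}. The assumption that $\varkappa$ preserves $L_0$ is essential and will be exploited through the following structural fact: an exact symplectomorphism preserving the vertical foliation on $T^*\mathbb R^n$ is the lift of a base diffeomorphism, that is, it has the form $\varkappa(y,\eta)=(\kappa(y),(d\kappa(y))^{-T}\eta+\phi(\kappa(y)))$ for a diffeomorphism $\kappa$ and a 1-form $\phi$ determined by the fixed antiderivative. In particular $\varkappa$ is affine in $\eta$, and its linear part in $\eta$ depends only on $y$.

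Conjugate everything by the unitary rescaling $T_{\rho,\rho'}$: set $\tilde a:=a_{\rho,\rho'}\in S_\delta$ as in~\S\ref{s:appendix-basic}, and $\tilde B:=T_{\rho,\rho'}BT_{\rho,\rho'}^{-1}$, $\tilde B':=T_{\rho,\rho'}B'T_{\rho,\rho'}^{-1}$. The identity $T_{\rho,\rho'}\Op_h(a)T_{\rho,\rho'}^{-1}=\Op_h(\tilde a)$ reduces the problem to computing $\tilde B'\Op_h(\tilde a)\tilde B$. Using the affine-in-$\eta$ structure of $\varkappa$ and a representative oscillatory-integral realization of $B,B'$, one checks that $\tilde B$ and $\tilde B'$ are themselves compactly supported and microlocalized semiclassical FIOs (uniformly in $h$), associated to a rescaled symplectomorphism $\tilde\varkappa$ whose derivatives are bounded uniformly in $h$ on any fixed compact set. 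This uniform boundedness would fail without $L_0$-preservation, since a generic symplectomorphism would mix the rescalings $y\mapsto h^{(\rho-\rho')/2}y$ and $\eta\mapsto h^{(\rho'-\rho)/2}\eta$ in a way that produces unbounded coefficients.

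Now apply Egorov's theorem in the $S_\delta$ calculus (see for instance \cite[Theorem~11.1]{e-z} adapted to the $S_\delta$ framework) to obtain $\tilde B'\Op_h(\tilde a)\tilde B=\Op_h(\tilde b)+\mathcal O(h^\infty)_{L^2\to L^2}$ for some $\tilde b\in S_\delta$ whose asymptotic expansion has leading term $(\tilde a\circ\tilde\varkappa)\,\sigma_h(\tilde B'\tilde B)$ with remainder $\mathcal O(h^{1-2\delta})$, and whose support is contained in $\tilde\varkappa^{-1}(\supp\tilde a)$ modulo $\mathcal O(h^\infty)$ by the pseudolocal nature of the expansion. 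Undoing the rescaling produces $b\in S^{\comp}_{L_0,\rho,\rho'}(T^*\mathbb R^n)$ satisfying~\eqref{e:gijoe-1}; the identity $\sigma_h(\tilde B'\tilde B)\circ T_{\rho,\rho'}^{-1}=\sigma_h(B'B)$ together with~\eqref{e:egorov-fio} gives the principal-symbol expression~\eqref{e:gijoe-2}, where the remainder has to be stated as $\mathcal O(h^{1-\rho})$ (weaker than what the $S_\delta$ calculus delivers) because we measure it in the anisotropic seminorms of $S^{\comp}_{L_0,\rho,\rho'}$. The support statement~\eqref{e:gijoe-3} likewise transfers back.

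The main technical obstacle is Step~2, namely verifying carefully that the conjugates $\tilde B,\tilde B'$ fit uniformly in $h$ into the standard semiclassical FIO framework with underlying symplectomorphism $\tilde\varkappa$. This is a routine but bookkeeping-heavy computation that relies essentially on the affine-in-$\eta$ form of $\varkappa$; everything else in the argument is a mechanical transcription of classical Egorov in the $S_\delta$ calculus.
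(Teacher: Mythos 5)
There is a genuine gap, and it sits exactly where you label the work as ``routine but bookkeeping-heavy''. After conjugation by $T_{\rho,\rho'}$ the operators $\tilde B=T_{\rho,\rho'}BT_{\rho,\rho'}^{-1}$ and $\tilde B'$ are \emph{not} compactly microlocalized semiclassical Fourier integral operators uniformly in $h$ in any standard sense: their microsupports are the images of $\WFh(B),\WFh(B')$ under the $h$-dependent scaling $(y,\eta)\mapsto (h^{(\rho'-\rho)/2}y,\,h^{(\rho-\rho')/2}\eta)$ and hence expand like $h^{-(\rho-\rho')/2}$ in $y$; the underlying symplectomorphism $\tilde\varkappa$ is $h$-dependent on this expanding region; and, crucially, the amplitudes of $\tilde B,\tilde B'$ acquire $\eta$-derivatives of size $h^{-(\rho-\rho')/2}$ (take a phase linear in $\eta$, as the $L_0$-preservation allows, and rescale: the amplitude becomes $q(h^{(\rho-\rho')/2}x,h^{-(\rho-\rho')/2}\eta)$), so they are exotic rather than classical. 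There is no off-the-shelf ``Egorov theorem in the $S_\delta$ calculus'' covering this situation: \cite[Theorem~11.1]{e-z} concerns conjugation by pseudodifferential propagators $e^{-itP/h}$, and the FIO--conjugation results in the standard references assume an $h$-independent symplectomorphism, uniform compact microlocalization, and non-exotic amplitudes. Making your Step~2 precise amounts to redoing the stationary-phase computation for the kernel of $B'\Op_h(a)B$ with the anisotropic derivative bounds, which is exactly what the paper does by following \cite[Lemmas~3.9, 3.10]{hgap} and tracking how the remainders change (an $\mathcal O(h^{1-2\rho'})$ loss in the stationary-phase step and an $\mathcal O(h^{1-\rho})$ loss from the product formula~\eqref{e:beepy-4}).

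Second, your remainder bookkeeping is backwards. Since $\rho'\geq 0$, the bound $\mathcal O(h^{1-2\delta})=\mathcal O(h^{1-\rho-\rho'})$ is \emph{weaker} than the $\mathcal O(h^{1-\rho})$ demanded by~\eqref{e:gijoe-2}, not stronger, so~\eqref{e:gijoe-2} cannot be obtained by ``weakening'' the black-box $S_\delta$ expansion; the parenthetical ``(weaker than what the $S_\delta$ calculus delivers)'' has the inequality the wrong way around. The estimate $\mathcal O(h^{1-\rho})$ holds precisely because the conjugating amplitudes are $h$-independent in the original frame, so the subleading terms pair a $y$-derivative of $a$ (cost $h^{-\rho}$) with an $\eta$-derivative of a classical amplitude (cost $\mathcal O(1)$); an isotropic $S_\delta$ treatment, which is all your reduction would give even if Step~2 were repaired, only yields $\mathcal O(h^{1-\rho-\rho'})$. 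The approach can in principle be salvaged by carrying out the rescaled computation while keeping the anisotropic classes of the amplitudes, but at that point it coincides with the argument of \cite[Lemmas~3.9, 3.10]{hgap} rather than bypassing it. (Your structural description of $L_0$-preserving exact symplectomorphisms as lifts of base diffeomorphisms composed with a fiber shift by a closed one-form is correct, and it is also what underlies the cited proofs.)
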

\begin{proof}
We argue exactly as in the proofs of~\cite[Lemmas~3.9, 3.10]{hgap}.
The stationary phase asymptotic at the end of the proof of~\cite[Lemma~3.9]{hgap}
produces a remainder $\mathcal O(h^{1-2\rho'})$. The
multiplication formula~\eqref{e:beepy-4} applied to the
expression $A'\Op_h(\tilde a)A$ 
in the last paragraph of the proof of~\cite[Lemma~3.10]{hgap}
gives a remainder $\mathcal O(h^{1-\rho})$.
\end{proof}

\subsection{General calculus}
  \label{s:appendix-general}

We now construct a quantization $\Op_h^L(a)$ of a symbol
$a\in S^{\comp}_{L,\rho,\rho'}(U)$ for a general Lagrangian foliation $L$
on $U\subset T^*M$. This is done similarly to~\cite[\S3.3]{hgap}
by summing operators in the model calculus conjugated by appropriately
chosen Fourier integral operators.

We say that $(U',\varkappa,B,B')$ is a \emph{chart} for $L$
if:
\begin{itemize}
\item $U'\subset U$ is an open set
and $\varkappa:U'\to T^*\mathbb R^n$ is an exact symplectomorphism onto its
image which maps $L$ to $L_0$;
\item $B\in I^{\comp}_h(\varkappa)$ and $B'\in I^{\comp}_h(\varkappa^{-1})$.
\end{itemize}
For each $(x_0,\xi_0)\in U$ there exists a chart $(U',\varkappa,B,B')$ such that
$\sigma_h(B'B)(x_0,\xi_0)\neq 0$ (in fact, we may take $B'=B^*$). Here the existence of $\varkappa$
follows from~\cite[Lemma~3.6]{hgap} and the existence of $B,B'$ is discussed
in the paragraph following~\cite[(2.12)]{hgap}.

Following~\cite[(3.11)]{hgap} we put for $a\in S^{\comp}_{L,\rho,\rho'}(U)$
$$
\Op_h^L(a):=\sum_\ell B_\ell' \Op_h(a_\ell)B_\ell,\quad
a_\ell=(\chi_\ell a)\circ\varkappa_\ell^{-1}\in S^{\comp}_{L_0,\rho,\rho'}(T^*\mathbb R^n),
$$
where $(U_\ell,\varkappa_\ell,B_\ell,B'_\ell)$ is a collection of charts for $L$
such that $U_\ell\subset U$ form a locally finite cover of $U$,
the symbols $\sigma_h(B'_\ell B_\ell)\in C_0^\infty(U_\ell)$
form a partition of unity on $U$, $\chi_\ell\in C_0^\infty(U_\ell)$ are
equal to 1 near $\supp\sigma_h(B_\ell'B_\ell)$,
and $\Op_h$ is defined in~\eqref{e:op-h}.
The quantization procedure $\Op_h^L$ depends on the choice of charts,
however properties~(3)--(4) below show that the resulting class
of operators is invariant.

To simplify the proof of Lemma~\ref{l:gaarding} below, we additionally
assume that $B'_\ell=B_\ell^*$. This can be arranged as follows: note that
for any choice of $\varkappa$ and $B\in I^{\comp}_h(\varkappa)$, we have
$B^*\in I^{\comp}_h(\varkappa^{-1})$
and $\sigma_h(B^*B)\geq 0$ (since $B^*B$ is a pseudodifferential operator which
is nonnegative on $L^2$).
Choose a collection of charts $(U_\ell,\varkappa_\ell,\widetilde B_\ell,\widetilde B^*_\ell)$ for $L$
such that $b:=\sum_\ell \sigma_h(\widetilde B_\ell^*\widetilde B_\ell)>0$ on $U$. Putting
$B_\ell:=\widetilde B_\ell Y_\ell$ where $Y_\ell\in\Psi^{\comp}_h(M)$ satisfy
$\sigma_h(Y_\ell)=b^{-1/2}$ near $\WFh(\widetilde B_\ell^*\widetilde B_\ell)$,
we obtain $\sum_\ell \sigma_h(B_\ell^*B_\ell)=1$ on $U$.

For a compactly supported
operator $A:L^2(M)\to L^2(M)$, we say that $A\in\Psi^{\comp}_{h,L,\rho,\rho'}(U)$
if $A=\Op_h^L(a)+\mathcal O(h^\infty)_{L^2\to L^2}$ for some
$a\in S^{\comp}_{L,\rho,\rho'}(U)$. The quantization
procedure $\Op_h^L$ has the following properties
which are consequences of the results of~\S\ref{s:appendix-basic}--\S\ref{s:fios},
see~\cite[Lemmas~3.12 and~3.14]{hgap}:
\begin{enumerate}
\item For each $a\in S^{\comp}_{L,\rho,\rho'}(U)$, the operator
$\Op_h^L(a):L^2(M)\to L^2(M)$ is compactly supported and
bounded uniformly in $h$.
\item If $a\in C_0^\infty(U)$ is $h$-independent,
then
$\Op_h^L(a)\in \Psi^{\comp}_h(M)$ and
$\sigma_h(\Op_h^L(a))=a$.
\item For each $a\in S^{\comp}_{L,\rho,\rho'}(U)$ and
a chart $(U',\varkappa,B,B')$ for $L$ there exists
$b\in S^{\comp}_{L_0,\rho,\rho'}(T^*\mathbb R^n)$ such that
\begin{equation}
  \label{e:genius-1}
\begin{aligned}
B\Op_h^L(a)B'&=\Op_h(b)+\mathcal O(h^\infty)_{L^2\to L^2},\\
b&=(a\circ \varkappa^{-1})\sigma_h(BB')+\mathcal O(h^{1-\rho})_{S^{\comp}_{L_0,\rho,\rho'}(T^*\mathbb R^n)},\\
\supp b&\subset \varkappa(\supp a).
\end{aligned}
\end{equation}
\item For each $b\in S^{\comp}_{L_0,\rho,\rho'}(T^*\mathbb R^n)$ and
a chart $(U',\varkappa,B,B')$ for $L$ there exists
$a\in S^{\comp}_{L,\rho,\rho'}(U)$ such that
\begin{equation}
  \label{e:genius-2}
\begin{aligned}
B'\Op_h(b)B&=\Op_h^L(a)+\mathcal O(h^\infty)_{L^2\to L^2},\\
a&=(b\circ\varkappa)\sigma_h(B'B)+\mathcal O(h^{1-\rho})_{S^{\comp}_{L_0,\rho,\rho'}(T^*\mathbb R^n)},\\
\supp a&\subset\varkappa^{-1}(\supp b).
\end{aligned}
\end{equation}
\item Assume that $M_1,M_2$ are manifolds of the same dimension,
$U_j\subset T^*M_j$ are open sets,
$L_j$ are Lagrangian foliations on $U_j$,
$U'_j\subset U_j$ are open,
$\varkappa:U'_2\to U'_1$ is an exact symplectomorphism mapping
$L_2$ to $L_1$,
and $B\in I^{\comp}_h(\varkappa)$, $B'\in I^{\comp}_h(\varkappa^{-1})$.
Then for each $a_1\in S^{\comp}_{L_1,\rho,\rho'}(U_1)$ there exists
$a_2\in S^{\comp}_{L_2,\rho,\rho'}(U_2)$ such that
\begin{equation}
  \label{e:genius-3}
\begin{aligned}
B'\Op_h^{L_1}(a_1)B&=\Op_h^{L_2}(a_2)+\mathcal O(h^\infty)_{L^2\to L^2},\\
a_2&=(a_1\circ\varkappa)\sigma_h(B'B)+\mathcal O(h^{1-\rho})_{S^{\comp}_{L_0,\rho,\rho'}(U_2)},\\
\supp a_2&\subset \varkappa^{-1}(\supp a_1).
\end{aligned}
\end{equation}
\item For each $a,b\in S^{\comp}_{L,\rho,\rho'}(U)$ there exists
$a\#_L b\in S^{\comp}_{L,\rho,\rho'}(U)$ such that
\begin{equation}
  \label{e:genius-4}
\begin{aligned}
\Op_h^L(a)\Op_h^L(b)&=\Op_h^L(a\#_L b)+\mathcal O(h^\infty)_{L^2\to L^2},\\
a\#_L b&=ab+\mathcal O(h^{1-\rho-\rho'})_{S^{\comp}_{L,\rho,\rho'}(U)},\\
\supp (a\#_L b)&\subset \supp a\cap \supp b.
\end{aligned}
\end{equation}
\item For each $a\in S^{\comp}_{L,\rho,\rho'}(U)$
there exists $a^*_L\in S^{\comp}_{L,\rho,\rho'}(U)$ such that
\begin{equation}
  \label{e:genius-5}
\begin{aligned}
\Op_h^L(a)^*&=\Op_h^L(a^*_L)+\mathcal O(h^\infty)_{L^2\to L^2},\\
a^*_L&=\overline a+\mathcal O(h^{1-\rho-\rho'})_{S^{\comp}_{L,\rho,\rho'}(U)},\\
\supp a^*_L&\subset \supp a.
\end{aligned}
\end{equation}
\end{enumerate}
The following version of sharp G\r arding inequality follows immediately
from Lemma~\ref{l:garding-model} and the fact that
$B'_\ell=B_\ell^*$:
\begin{lemm}
  \label{l:gaarding}
Assume that $M$ is compact,
$a\in S^{\comp}_{L,\rho,\rho'}(U)$, and $\Re a\geq 0$. Then
there exists a constant $C$ depending on some $S^{\comp}_{L,\rho,\rho'}$ seminorm of $a$ such that
\begin{equation}
  \label{e:gaarding}
\Re\langle \Op_h^L(a) u,u\rangle_{L^2}\geq -Ch^{1-\rho-\rho'}\|u\|_{L^2}^2\quad\text{for all }
u\in L^2(M).
\end{equation}
\end{lemm}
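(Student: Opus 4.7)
The plan is to follow exactly the hint given before the lemma: reduce the statement to the model sharp G\r arding inequality of Lemma~\ref{l:garding-model} by exploiting the decomposition of $\Op_h^L(a)$ into conjugates by Fourier integral operators, together with the arranged condition $B'_\ell=B_\ell^*$.

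First, I write out the definition
$$
\Op_h^L(a)=\sum_\ell B_\ell^* \Op_h(a_\ell)B_\ell,\quad
a_\ell=(\chi_\ell a)\circ\varkappa_\ell^{-1}.
$$
Since $a\in S^{\comp}_{L,\rho,\rho'}(U)$ has compact support in $U$ and the cover $\{U_\ell\}$ is locally finite, only finitely many terms contribute (this is where compactness of $M$ is used to keep the sum finite up to $\mathcal O(h^\infty)$). The cutoffs $\chi_\ell\in C_0^\infty(U_\ell)$ can be taken nonnegative (a standard choice of a partition-of-unity bump), so the hypothesis $\Re a\geq 0$ transfers to $\Re a_\ell=(\chi_\ell\Re a)\circ\varkappa_\ell^{-1}\geq 0$ on $T^*\mathbb R^n$.

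Next, I pair with $u\in L^2(M)$ and move the adjoint to the other side:
$$
\Re\langle \Op_h^L(a)u,u\rangle_{L^2}
=\sum_\ell \Re\langle \Op_h(a_\ell)B_\ell u,B_\ell u\rangle_{L^2}
+\mathcal O(h^\infty)\|u\|_{L^2}^2,
$$
where the $\mathcal O(h^\infty)$ absorbs the remainder from the definition of~$\Op_h^L$. Lemma~\ref{l:garding-model} applied to each $a_\ell$ (with $\Re a_\ell\geq 0$) yields
$$
\Re\langle \Op_h(a_\ell)B_\ell u,B_\ell u\rangle_{L^2}\geq -C_\ell h^{1-\rho-\rho'}\|B_\ell u\|_{L^2}^2,
$$
with $C_\ell$ controlled by $S^{\comp}_{L_0,\rho,\rho'}$-seminorms of $a_\ell$, which in turn are controlled by $S^{\comp}_{L,\rho,\rho'}$-seminorms of $a$ (uniformly in the finitely many~$\ell$ that contribute).

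Finally, I sum over $\ell$ and bound
$$
\sum_\ell \|B_\ell u\|_{L^2}^2
=\Big\langle \Big(\sum_\ell B_\ell^* B_\ell\Big)u,u\Big\rangle_{L^2}
\leq C\|u\|_{L^2}^2,
$$
since $\sum_\ell B_\ell^* B_\ell\in\Psi^{\comp}_h(M)+\mathcal O(h^\infty)$ is uniformly bounded on $L^2(M)$ (in fact, by construction its principal symbol is~$1$ on the support of~$a$). Combining these estimates produces~\eqref{e:gaarding}. There is no real obstacle here beyond bookkeeping; the only subtle point is ensuring the cutoffs $\chi_\ell$ can be chosen nonnegative so that the positivity of $\Re a$ descends to each local piece $\Re a_\ell$, which is automatic in the standard partition-of-unity construction.
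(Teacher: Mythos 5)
Your proof is correct and follows exactly the route the paper intends: the paper's own (one-line) argument is that Lemma~\ref{l:gaarding} follows immediately from the model inequality of Lemma~\ref{l:garding-model} together with the arrangement $B'_\ell=B_\ell^*$, which is precisely the decomposition, conjugation, and summation you carried out (your $\mathcal O(h^\infty)$ remainder is in fact not even needed, since the definition of $\Op_h^L$ is the exact finite sum).
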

Lemma~\ref{l:gaarding} implies a more precise bound
on the operator norm of $\Op_h^L(a)$:
\begin{lemm}
  \label{l:precise-norm}
Assume that $M$ is compact, $a\in S^{\comp}_{L,\rho,\rho'}(U)$,
and $\sup |a|\leq 1$.
Then there exists a constant $C$ depending on some $S^{\comp}_{L,\rho,\rho'}$ seminorm of $a$ such that
\begin{equation}
  \label{e:precise-norm}
\|\Op_h^L(a)\|_{L^2\to L^2}\leq 1+Ch^{1-\rho-\rho'}.
\end{equation}
\end{lemm}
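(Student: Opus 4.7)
The plan is to reduce the norm bound to the sharp G\r arding inequality of Lemma~\ref{l:gaarding} via the standard $T^*T$ trick. Since
$$
\|\Op_h^L(a)u\|_{L^2}^2=\langle \Op_h^L(a)^*\Op_h^L(a)u,u\rangle_{L^2},
$$
it suffices to control the expectation of the self-adjoint operator $\Op_h^L(a)^*\Op_h^L(a)$, and the $1-\rho-\rho'$ power is exactly the loss one gets from a single application of G\r arding in this calculus.

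First I would combine the adjoint formula~\eqref{e:genius-5} and the composition formula~\eqref{e:genius-4} to reduce to a scalar symbol. Explicitly, $a_L^*=\overline a+\mathcal O(h^{1-\rho-\rho'})_{S^{\comp}_{L,\rho,\rho'}(U)}$ and then $a_L^*\#_L a=\overline a\cdot a+\mathcal O(h^{1-\rho-\rho'})_{S^{\comp}_{L,\rho,\rho'}(U)}=|a|^2+\mathcal O(h^{1-\rho-\rho'})_{S^{\comp}_{L,\rho,\rho'}(U)}$. Since the uniform boundedness of $\Op_h^L$ on $L^2$ in property~(1) converts a prefactor of $h^{1-\rho-\rho'}$ on the symbol into the same factor on the operator norm, this gives
$$
\Op_h^L(a)^*\Op_h^L(a)=\Op_h^L(|a|^2)+\mathcal O(h^{1-\rho-\rho'})_{L^2\to L^2}.
$$

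Next, because $\supp a$ is contained in an $h$-independent compact subset of $U$, I would choose an $h$-independent cutoff $\chi\in C_0^\infty(U;[0,1])$ with $\chi\equiv 1$ on a neighborhood of $\supp a$. Then $\chi-|a|^2\in S^{\comp}_{L,\rho,\rho'}(U)$ is nonnegative everywhere (it equals $\chi\geq 0$ off $\supp a$, and $1-|a|^2\geq 0$ on $\supp a$ by the assumption $\sup|a|\leq 1$), so Lemma~\ref{l:gaarding} yields
$$
\langle\Op_h^L(|a|^2)u,u\rangle_{L^2}\leq \langle\Op_h^L(\chi)u,u\rangle_{L^2}+Ch^{1-\rho-\rho'}\|u\|_{L^2}^2.
$$
Since $\chi$ is $h$-independent and smooth, property~(2) of the calculus gives $\Op_h^L(\chi)\in\Psi^{\comp}_h(M)$ with principal symbol $\chi$, and as $\sup\chi\leq 1$ the basic bound~\eqref{e:basic-norm-bound} gives $\|\Op_h^L(\chi)\|_{L^2\to L^2}\leq 1+Ch$.

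Assembling the three estimates produces $\|\Op_h^L(a)u\|_{L^2}^2\leq (1+Ch^{1-\rho-\rho'})\|u\|_{L^2}^2$, and taking square roots (using $\sqrt{1+x}\leq 1+x/2$) gives~\eqref{e:precise-norm}. There is no real obstacle here; the only point to verify is that each of the three constants produced above depends only on finitely many $S^{\comp}_{L,\rho,\rho'}$ seminorms of $a$. This is transparent for the composition and adjoint remainders~\eqref{e:genius-4}--\eqref{e:genius-5}, while for the G\r arding step one may take $\chi$ to depend only on (any $h$-independent compact neighborhood of) $\supp a$, so the $S^{\comp}_{L,\rho,\rho'}$ seminorms of $\chi-|a|^2$ are controlled by those of $a$ alone.
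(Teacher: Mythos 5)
Your proof is correct and follows essentially the same route as the paper: compare $|a|^2$ with an $h$-independent cutoff, apply the sharp G\r arding inequality of Lemma~\ref{l:gaarding} together with the composition/adjoint formulas~\eqref{e:genius-4}--\eqref{e:genius-5}, and control the cutoff operator by~\eqref{e:basic-norm-bound}. The only differences are cosmetic: the paper uses the nonnegative symbol $\chi^2-|a|^2$ and works directly with $\|\Op_h^L(\chi)u\|^2-\|\Op_h^L(a)u\|^2$, while you use $\chi-|a|^2$ and pass through $\Op_h^L(a)^*\Op_h^L(a)=\Op_h^L(|a|^2)+\mathcal O(h^{1-\rho-\rho'})_{L^2\to L^2}$ (where one should take real parts of the resulting pairings, since these quantizations are only self-adjoint up to such remainders).
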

\begin{proof}
Fix $h$-independent $\chi\in C_0^\infty(U;[0,1])$ such that $\chi=1$ near $\supp a$.
Put
$b:=\chi^2-|a|^2$. Then $b\in S^{\comp}_{L,\rho,\rho'}(U)$ and
$b\geq 0$.
Applying Lemma~\ref{l:gaarding} to $b$, we get
for all $u\in L^2(M)$
$$
\begin{aligned}
\|\Op_h^L(\chi)u\|_{L^2}^2
-\|\Op_h^L(a)u\|_{L^2}^2
&\geq \Re\langle \Op_h^L(b)u,u\rangle_{L^2}
-Ch^{1-\rho-\rho'}\|u\|_{L^2}^2
\\&\geq -Ch^{1-\rho-\rho'}\|u\|_{L^2}^2.
\end{aligned}
$$
Estimating the norm of $\Op_h^L(\chi)$ by~\eqref{e:basic-norm-bound}
we get
$$
\begin{aligned}
\|\Op_h^L(a)u\|_{L^2}^2&\leq \|\Op_h^L(\chi)u\|_{L^2}^2
+Ch^{1-\rho-\rho'}\|u\|_{L^2}^2
\\&\leq \|u\|_{L^2}^2+Ch^{1-\rho-\rho'}\|u\|_{L^2}^2
\end{aligned}
$$
finishing the proof.
\end{proof}
Using Lemma~\ref{l:precise-norm} we get the following operator version of Lemma~\ref{l:long-product-symbols}:
\begin{lemm}
  \label{l:long-product-operators}
Let $a_1,\dots,a_N\in S^{\comp}_{L,\rho,\rho'}(U)$ be as in Lemma~\ref{l:long-product-symbols}
and assume that $A_1,\dots,A_N$ are operators on $L^2(M)$ such that
$A_j=\Op_h^L(a_j)+\mathcal O(h^{1-\rho-\rho'-})_{L^2\to L^2}$ where the constants in $\mathcal O(\bullet)$
are independent of $j$. Then
$$
A_1\cdots A_N=\Op_h^L(a_1\cdots a_N)+\mathcal O(h^{1-\rho-\rho'-})_{L^2\to L^2}.
$$
\end{lemm}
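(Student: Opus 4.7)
The plan is to telescope the difference $A_1\cdots A_N-\Op_h^L(a_1\cdots a_N)$ into two types of errors --- ``substitution'' errors from replacing $A_j$ with $\Op_h^L(a_j)$, and ``composition'' errors from iteratively combining $\Op_h^L(a_1)\cdots\Op_h^L(a_N)$ into $\Op_h^L(a_1\cdots a_N)$ via~\eqref{e:genius-4} --- and bound each using operator norm estimates that rely crucially on Lemma~\ref{l:precise-norm}.

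First I would observe that by Lemma~\ref{l:precise-norm}, for any small $\varepsilon>0$ and $h$ sufficiently small,
\begin{equation*}
\|\Op_h^L(a_j)\|_{L^2\to L^2}\leq 1+Ch^{1-\rho-\rho'},\qquad
\|A_j\|_{L^2\to L^2}\leq 1+Ch^{1-\rho-\rho'-\varepsilon}
\end{equation*}
uniformly in $j$, where the second bound follows from the assumption $A_j=\Op_h^L(a_j)+\mathcal O(h^{1-\rho-\rho'-})$. Since $N\leq \log(1/h)$, the product of any $N$ such norms is bounded by $(1+Ch^{1-\rho-\rho'-\varepsilon})^{\log(1/h)}\leq e^{Ch^{1-\rho-\rho'-\varepsilon}\log(1/h)}$, which stays bounded (in fact tends to $1$) as $h\to 0$ since $1-\rho-\rho'>0$. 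This uniform boundedness of all partial products is the only place the hypothesis $N\leq \log(1/h)$ is used, and it is the main technical point of the argument.

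Second, I would write the telescoping identity
\begin{equation*}
A_1\cdots A_N-\Op_h^L(a_1)\cdots\Op_h^L(a_N)
=\sum_{j=1}^N A_1\cdots A_{j-1}\big(A_j-\Op_h^L(a_j)\big)\Op_h^L(a_{j+1})\cdots\Op_h^L(a_N).
\end{equation*}
Using the uniform norm bounds above, each of the $N\leq \log(1/h)$ summands has norm $\mathcal O(h^{1-\rho-\rho'-\varepsilon})$, so the full sum is $\mathcal O(\log(1/h)\,h^{1-\rho-\rho'-\varepsilon})=\mathcal O(h^{1-\rho-\rho'-2\varepsilon})$.

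Third, for the composition step, set $b_k:=a_1\cdots a_k$. By Lemma~\ref{l:long-product-symbols}, $b_k\in S^{\comp}_{L,\rho+\varepsilon,\rho'+\varepsilon}(U)$ with seminorms bounded uniformly in $k$. Applying~\eqref{e:genius-4} in this slightly larger class (valid whenever $\rho+\rho'+2\varepsilon<1$ and $\rho'+\varepsilon\leq(\rho+\varepsilon)/2$, arranged by taking $\varepsilon$ small), I get at each step
\begin{equation*}
\Op_h^L(b_k)\Op_h^L(a_{k+1})=\Op_h^L(b_{k+1})+E_k,\qquad
\|E_k\|_{L^2\to L^2}=\mathcal O(h^{1-\rho-\rho'-2\varepsilon}).
\end{equation*}
Iterating and telescoping, $\Op_h^L(a_1)\cdots\Op_h^L(a_N)-\Op_h^L(a_1\cdots a_N)=\sum_{k=1}^{N-1}E_k\Op_h^L(a_{k+2})\cdots\Op_h^L(a_N)$, whose norm is bounded by $\log(1/h)$ times $\mathcal O(h^{1-\rho-\rho'-2\varepsilon})$ (using again the uniform boundedness of the tail products), giving $\mathcal O(h^{1-\rho-\rho'-3\varepsilon})$. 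Combining the substitution and composition errors and letting $\varepsilon\to 0$ yields the claimed bound $\mathcal O(h^{1-\rho-\rho'-})$. The hard part, as flagged above, is the tight control of the tail products: without the near-isometry bound of Lemma~\ref{l:precise-norm}, a naive bound $\|\Op_h^L(a_i)\|\leq C$ would give $C^N=h^{-\log C}$ and destroy the estimate.
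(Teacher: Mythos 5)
Your proof is correct and is essentially the paper's argument: telescope the difference, bound the $\leq\log(1/h)$-fold partial products uniformly via the near-isometry estimate of Lemma~\ref{l:precise-norm}, keep the accumulated symbols $a_1\cdots a_k$ in a slightly enlarged class via Lemma~\ref{l:long-product-symbols}, and apply the product formula~\eqref{e:genius-4} to each one-step error, summing $\mathcal O(\log(1/h))$ errors of size $\mathcal O(h^{1-\rho-\rho'-})$. The only, immaterial, difference is organizational: you use two telescoping sums (first replacing $A_j$ by $\Op_h^L(a_j)$, then merging the quantizations), whereas the paper does both in a single telescoping with terms $B_jA_{j+1}\cdots A_N$, where $B_j=\Op_h^L(a_1\cdots a_{j-1})A_j-\Op_h^L(a_1\cdots a_j)$.
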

\begin{proof}
We have
$$
\begin{gathered}
A_1\cdots A_N-\Op_h^L(a_1\cdots a_N)
=\sum_{j=1}^N B_j A_{j+1}\cdots A_N,\\
B_j:=
\begin{cases}
A_1-\Op_h(a_1),& j=1;\\
\Op_h^L(a_1\cdots a_{j-1})A_j-\Op_h^L(a_1\cdots a_j),& 2\leq j\leq N. 
\end{cases}
\end{gathered}
$$
Here $\Op_h^L(a_1\cdots a_{j-1})$ is well-defined since by Lemma~\ref{l:long-product-symbols},
$a_1\cdots a_{j-1}\in S^{\comp}_{L,\rho+\varepsilon,\rho'+\varepsilon}(U)$
uniformly in $j$ for any small $\varepsilon>0$.

Since $\sup|a_j|\leq 1$, by Lemma~\ref{l:precise-norm} we have
for some $C$ independent of~$j$
$$
\|A_j\|_{L^2\to L^2}\leq 1+Ch^{1-\rho-\rho'}.
$$
Since $N=\mathcal O(\log(1/h))$, we have uniformly in $j$
$$
\|A_{j+1}\cdots A_N\|_{L^2\to L^2}\leq C.
$$
Therefore it suffices to show that we have uniformly in $j$,
\begin{equation}
  \label{e:lpoint-1}
\|B_j\|_{L^2\to L^2}=\mathcal O(h^{1-\rho-\rho'-})_{L^2\to L^2}.
\end{equation}
For $j=1$ this is immediate so we assume $2\leq j\leq N$.
We may replace $A_j$ by $\Op_h^L(a_j)$ in the definition of $B_j$.
Then~\eqref{e:lpoint-1} follows from the product formula~\eqref{e:genius-4} on the space $S_{L,\rho+\varepsilon,\rho'+\varepsilon}^{\comp}$.
\end{proof}

\subsection{Egorov's theorem}
\label{s:egorov}

We finally prove two versions of Egorov's theorem for the $\Psi^{\comp}_{h,L,\rho,\rho'}(U)$
calculus. In this subsection we assume that $M$
is a compact manifold, $U\subset T^*M$ is open,
$L$ is a Lagrangian foliation on $U$, and $P\in\Psi^{\comp}_h(M)$ is
self-adjoint with principal symbol $p=\sigma_h(P)\in C_0^\infty(T^*M;\mathbb R)$.
We moreover assume that
\begin{equation}
  \label{e:egorov-assume}
L_{(x,\xi)}\subset \ker dp(x,\xi)\quad\text{for all }(x,\xi)\in U;
\end{equation}
this is equivalent to the Hamiltonian vector field $H_p$
lying inside $L$.
The operator $e^{-itP/h}:L^2(M)\to L^2(M)$ is unitary.

We start with the following fixed time statement
similar to~\cite[Lemma~3.17]{hgap}:
\begin{lemm}
  \label{l:egorov-fixed}
Let $a\in S^{\comp}_{L,\rho,\rho'}(U)$ and fix an $h$-independent constant
$T\geq 0$ such that
$e^{-tH_p}(\supp a)\subset U$ for all $t\in [0,T]$.
Then
\begin{equation}
  \label{e:effy-1}
e^{itP/h}\Op_h^L(a)e^{-itP/h}=\Op_h^L(a\circ e^{tH_p})+\mathcal O(h^{1-\rho-\rho'})_{L^2\to L^2}
\quad\text{for }
0\leq t\leq T.
\end{equation}
\end{lemm}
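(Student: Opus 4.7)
The plan is to prove the statement by a standard Duhamel/commutator argument in the $\Psi^{\comp}_{h,L,\rho,\rho'}$ calculus, the heart of the matter being that the assumption $H_p\in L$ lets one apply the sharp commutator formula~\eqref{e:beepy-5} in a chart for $L$.

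First I would verify that $a_t:=a\circ e^{tH_p}\in S^{\comp}_{L,\rho,\rho'}(U)$ uniformly in $t\in[0,T]$. The key input is that the hypothesis~\eqref{e:egorov-assume} makes $H_p$ tangent to $L$, so $(e^{tH_p})_*$ preserves the foliation $L$. Consequently, if $Y_1,\dots,Y_m$ are tangent to $L$ then so are their pushforwards, and applying the chain rule to $a\circ e^{tH_p}$ produces derivatives that are themselves controlled in the same symbol class. The support condition is guaranteed by the hypothesis $e^{-tH_p}(\supp a)\subset U$.

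Next I would establish the infinitesimal Egorov identity
\begin{equation*}
\tfrac{i}{h}[P,\Op_h^L(a_t)]=\Op_h^L(H_p a_t)+\mathcal O(h^{1-\rho-\rho'})_{L^2\to L^2}\quad\text{uniformly in }t\in[0,T].
\end{equation*}
This is the main obstacle. I would reduce to the model case by a partition of unity and conjugation by Fourier integral operators, using property~\eqref{e:genius-3}: in a chart $(U',\varkappa,B,B')$ for $L$ the symbol $p\circ\varkappa^{-1}$ satisfies $\partial_\eta(p\circ\varkappa^{-1})=0$ on $\varkappa(U')$ since $\varkappa_* H_p\in L_0=\Span(\partial_\eta)$ forces $p\circ\varkappa^{-1}$ to depend only on $y$. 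Thus in the chart the commutator of the model-quantized $\tilde p=p\circ\varkappa^{-1}$ with the model symbol $\tilde a_t=(\chi\cdot a_t)\circ\varkappa^{-1}$ falls exactly into the hypothesis of~\eqref{e:beepy-5} (with $a_0=\tilde p$, $a_1=0$, $b_0=\tilde a_t$, $b_1=0$), giving
\begin{equation*}
[\Op_h(\tilde p),\Op_h(\tilde a_t)]=-ih\Op_h(\{\tilde p,\tilde a_t\})+\mathcal O(h^{2-\rho-\rho'})_{L^2\to L^2}.
\end{equation*}
Conjugating back and summing over the partition of unity via~\eqref{e:genius-2}, together with $\sigma_h(B_\ell^*B_\ell)$ summing to $1$ on $\supp a_t$, yields the infinitesimal identity with the claimed remainder.

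Finally I would integrate. Consider the interpolating family
\begin{equation*}
F(t):=e^{itP/h}\,\Op_h^L\!\bigl(a\circ e^{(T-t)H_p}\bigr)\,e^{-itP/h},\quad t\in[0,T],
\end{equation*}
so that $F(T)=e^{iTP/h}\Op_h^L(a)e^{-iTP/h}$ while $F(0)=\Op_h^L(a\circ e^{TH_p})$. Differentiating,
\begin{equation*}
F'(t)=e^{itP/h}\bigl(\tfrac{i}{h}[P,\Op_h^L(a_{T-t})]-\Op_h^L(H_p a_{T-t})\bigr)e^{-itP/h},
\end{equation*}
and by the infinitesimal identity the integrand has $L^2\to L^2$ norm $\mathcal O(h^{1-\rho-\rho'})$ uniformly in $t$. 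Since $e^{\pm itP/h}$ are unitary and $T$ is fixed, integrating from $0$ to $T$ gives
\begin{equation*}
\bigl\|F(T)-F(0)\bigr\|_{L^2\to L^2}\leq T\cdot\max_{t\in[0,T]}\|F'(t)\|_{L^2\to L^2}=\mathcal O(h^{1-\rho-\rho'}),
\end{equation*}
which is precisely~\eqref{e:effy-1}. The argument also runs uniformly for $t$ varying in $[0,T]$, which is how the statement is formulated.
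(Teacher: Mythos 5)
Your argument is correct and is essentially the paper's own proof: the same commutator identity $[P,\Op_h^L(b)]=-ih\Op_h^L(H_pb)+\mathcal O(h^{2-\rho-\rho'})$ established by reducing to a chart where $\partial_\eta(p\circ\varkappa^{-1})=0$ and invoking~\eqref{e:beepy-5}, followed by the same Duhamel integration. The only small imprecision is taking $a_1=b_1=0$ in~\eqref{e:beepy-5}: after conjugation the operator $BPB'$ and the transported symbol carry lower-order corrections of size $\mathcal O(h)$ and $\mathcal O(h^{1-\rho})$ respectively (see~\eqref{e:genius-1}), which is exactly the generality in which~\eqref{e:beepy-5} is stated, so the conclusion is unaffected.
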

\begin{proof}
We first claim that for each $b\in S^{\comp}_{L,\rho,\rho'}(U)$
\begin{equation}
  \label{e:egorov-commutator}
[P,\Op_h^L(b)]=-ih\Op_h^L(H_p b)+\mathcal O(h^{2-\rho-\rho'})_{L^2\to L^2}.
\end{equation}
Using a partition of unity for $b$ we may assume that there exists a chart $(U',\varkappa,B,B')$
for $L$
such that $B,B'$ quantize $\varkappa$ near $\varkappa(\supp b)\times \supp b$
in the sense of~\eqref{e:fio-quantize}.
Then $B'B=I+\mathcal O(h^\infty)$ microlocally near $\supp b$,
$\sigma_h(B'B)=1$ near $\supp b$,
and $\sigma_h(BB')=1$ near $\varkappa(\supp b)$.
Since both $\Op_h^L(b)$ and $P$ are pseudolocal, we have
$$
\begin{aligned}\relax
[P,\Op_h^L(b)]&=
B'B(PB'B\Op_h^L(b)-\Op_h^L(b)B'BP)B'B+\mathcal O(h^\infty)_{L^2\to L^2}
\\&=B'[BPB',B\Op_h^L(b)B']B+\mathcal O(h^\infty)_{L^2\to L^2}.
\end{aligned}
$$
By~\eqref{e:genius-1} we have
$$
\begin{aligned}
B\Op_h^L(b)B'=\Op_h(\tilde b)+\mathcal O(h^\infty)_{L^2\to L^2}&\quad\text{for some }\tilde b\in S^{\comp}_{L_0,\rho,\rho'}(T^*\mathbb R^n),\\
\tilde b=b\circ\varkappa^{-1}+\mathcal O(h^{1-\rho})_{S^{\comp}_{L_0,\rho,\rho'}(T^*\mathbb R^n)},&\quad
\supp \tilde b\subset \varkappa(\supp b).
\end{aligned}
$$
Next, $BPB'\in\Psi^{\comp}_h(\mathbb R^n)$ and 
by~\eqref{e:egorov-fio},
$\sigma_h(BPB')=(p\circ\varkappa^{-1})\sigma_h(BB')$
is equal to $p\circ\varkappa^{-1}$ near $\supp \tilde b$.
By~\eqref{e:egorov-assume} we then have $\partial_\eta \sigma_h(BPB')=0$
near $\supp\tilde b$.
By~\eqref{e:beepy-5}
\begin{equation}
  \label{e:egorov-comm2}
\begin{aligned}\relax
[P,\Op_h^L(b)]&=B'[BPB',\Op_h(\tilde b)]B+\mathcal O(h^\infty)_{L^2\to L^2}\\
&=-ihB'\Op_h\big(\{p\circ\varkappa^{-1},b\circ\varkappa^{-1}\}\big)B
+\mathcal O(h^{2-\rho-\rho'})_{L^2\to L^2}.
\end{aligned}
\end{equation}
We have $\{p\circ\varkappa^{-1},b\circ\varkappa^{-1}\}=(H_p b)\circ\varkappa^{-1}
\in h^{-\rho'}S^{\comp}_{L_0,\rho,\rho'}(T^*\mathbb R^n)$.
Therefore by~\eqref{e:genius-2}
the right-hand side of~\eqref{e:egorov-comm2}
is equal to $-ih\Op_h^L(H_p b)+\mathcal O(h^{2-\rho-\rho'})_{L^2\to L^2}$,
finishing the proof of~\eqref{e:egorov-commutator}.

Now, put $a_t:=a\circ e^{tH_p}$, $t\in [0,T]$.
By~\eqref{e:egorov-assume} the map
$e^{tH_p}$ preserves the foliation~$L$ on~$\supp a$, therefore
$a_t\in S^{\comp}_{L,\rho,\rho'}(U)$.
Since $\partial_t a_t=H_p a_t$,
by~\eqref{e:egorov-commutator} we have
$$
\begin{aligned}
ih\partial_t (e^{-itP/h}\Op_h^L(a_t)e^{itP/h})&=
e^{-itP/h}\big(ih\Op_h^L(\partial_t a_t)+[P,\Op_h^L(a_t)]\big)e^{itP/h}\\
&=\mathcal O(h^{2-\rho-\rho'})_{L^2\to L^2},\quad
0\leq t\leq T.
\end{aligned}
$$
Integrating this from $0$ to $t$, we get~\eqref{e:effy-1}, finishing the proof.
\end{proof}
We now restrict ourselves to the case when $M$ is a hyperbolic surface,
$U=T^*M\setminus 0$, and $L\in \{L_u,L_s\}$ with $L_u,L_s$
defined in~\eqref{e:l-foliations}.
Let $\varphi_t$ be the homogeneous geodesic flow,
$P\in\Psi^{\comp}_h(M)$ be defined in~\eqref{e:the-P},
and $U(t)=e^{-itP/h}$ as in~\eqref{e:U-t}.
The following statement is a version of Egorov's theorem
for times up to $\rho\log(1/h)$ assuming that the propagated
operator lies in the standard calculus $\Psi^{\comp}_h$:
\begin{lemm}
  \label{l:egorov-long}
Assume that $a\in C_0^\infty(\{1/4<|\xi|_g<4\})$ is $h$-independent.
Then we have uniformly in $t\in [0,\rho\log(1/h)]$
\begin{align}
  \label{e:elliot-1}
U(-t)\Op_h(a)U(t)&=\Op_h^{L_s}(a\circ\varphi_t)+\mathcal O(h^{1-\rho}\log(1/h))_{L^2\to L^2},\\
  \label{e:elliot-2}
U(t)\Op_h(a)U(-t)&=\Op_h^{L_u}(a\circ\varphi_{-t})+\mathcal O(h^{1-\rho}\log(1/h))_{L^2\to L^2}.
\end{align}
Here $a\circ\varphi_t\in S^{\comp}_{L_s,\rho,0}(T^*M\setminus 0)$
and $a\circ\varphi_{-t}\in S^{\comp}_{L_u,\rho,0}(T^*M\setminus 0)$
by~\eqref{e:derbound-long-stable}, \eqref{e:derbound-long-unstable}.
\end{lemm}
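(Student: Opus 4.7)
The plan is to iterate the fixed-time Egorov theorem (Lemma~\ref{l:egorov-fixed}) over unit time steps in $[0,\rho\log(1/h)]$ and sum the remainders. Before iterating, I would verify the hypotheses for $P$ and $L=L_s$: by \eqref{e:the-P-2} the principal symbol of $P$ equals $p=|\xi|_g$ on $\{1/4\le|\xi|_g\le 4\}$, and since the geodesic flow preserves each level set of $p$, the symbol $a\circ\varphi_t$ stays supported in $\{1/4<|\xi|_g<4\}$ for all $t\ge 0$, where the Hamilton flow of $\sigma_h(P)$ coincides with $\varphi_t$. The Lagrangian condition \eqref{e:egorov-assume} holds because $L_s=\Span(H_p,U_+)\subset \ker dp$ (the vector field $U_+$ is tangent to level sets of $p$).

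Next set $a_t:=a\circ\varphi_t$. Using the commutation relations~\eqref{e:comm-rel-0}--\eqref{e:comm-rel}, as in the derivation preceding~\eqref{e:derbound-long-stable}, one has
\begin{equation*}
H_p^k U_+^\ell U_-^m D^n(a_t)=e^{(m-\ell)t}(H_p^k U_+^\ell U_-^m D^n a)\circ\varphi_t.
\end{equation*}
For $0\le t\le \rho\log(1/h)$ this is bounded by $Ch^{-\rho m}$, hence the family $\{a_t\}$ lies in a bounded subset of $S^{\comp}_{L_s,\rho,0}(T^*M\setminus 0)$. In particular, the seminorms controlling the constant in Lemma~\ref{l:egorov-fixed} applied to $a_t$ with \emph{fixed} time increment $T=1$ (and $\rho'=0$) are uniform in $t$.

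Now I iterate. Define $E_j:=U(-j)\Op_h(a)U(j)-\Op_h^{L_s}(a_j)$. By property~(2) of the $\Op_h^L$ calculus, $\Op_h(a)$ and $\Op_h^{L_s}(a)$ have the same principal symbol, so $E_0=\mathcal O(h)_{L^2\to L^2}$. Applying Lemma~\ref{l:egorov-fixed} with time~$1$ to the symbol $a_j$ and using unitarity of $U(\pm 1)$,
\begin{equation*}
U(-j-1)\Op_h(a)U(j+1)=U(-1)\Op_h^{L_s}(a_j)U(1)+U(-1)E_jU(1)=\Op_h^{L_s}(a_{j+1})+E_j+\mathcal O(h^{1-\rho})_{L^2\to L^2},
\end{equation*}
so $\|E_{j+1}\|\le\|E_j\|+Ch^{1-\rho}$ uniformly for $0\le j\le \rho\log(1/h)$. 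Telescoping gives $\|E_j\|=\mathcal O(h^{1-\rho}\log(1/h))$ on this range. For noninteger $t$, I would write $t=j+s$ with $s\in[0,1)$ and conjugate once more by $U(\pm s)$, applying Lemma~\ref{l:egorov-fixed} to $a_j$ with time $s$; the extra $\mathcal O(h^{1-\rho})$ error is absorbed into the final remainder. The estimate \eqref{e:elliot-2} follows from the same argument with reversed time, which exchanges $L_s$ and $L_u$.

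The one point that requires care is ensuring that the constant in each single application of Lemma~\ref{l:egorov-fixed} depends only on the $S^{\comp}_{L_s,\rho,0}$ seminorms of the propagated symbol and not on $j$; this is exactly what the displayed commutation identity provides, and it is the reason one may take $\rho'=0$ for the class of $a_t$. Beyond this, the argument is a routine telescoping of $\mathcal O(\log(1/h))$ applications of the fixed-time statement.
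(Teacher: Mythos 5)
Your proof is correct and follows essentially the same route as the paper: replace $\Op_h(a)$ by $\Op_h^{L_s}(a)$ via property~(2) of the calculus, note the uniform $S^{\comp}_{L_s,\rho,0}$ bounds on $a\circ\varphi_t$ from the commutation relations, and telescope $\mathcal O(\log(1/h))$ applications of Lemma~\ref{l:egorov-fixed} using unitarity of $U(\cdot)$, summing the $\mathcal O(h^{1-\rho})$ errors (the paper writes $t=Ns$ with $s\in[0,2]$ rather than unit steps plus a fractional remainder, a cosmetic difference).
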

\begin{proof}
We prove~\eqref{e:elliot-1}, with~\eqref{e:elliot-2} proved similarly
(replacing $P$ by $-P$). By property~(2) in~\S\ref{s:appendix-general} we may replace $\Op_h(a)$ by $\Op_h^{L_s}(a)$
with an $\mathcal O(h)_{L^2\to L^2}$ error.

We write $t=Ns$ where $0\leq s\leq 2$ and $N\in\mathbb N_0$, $N\leq\log(1/h)$.
Then
$$
\begin{gathered}
U(-t)\Op_h^{L_s}(a)U(t)-\Op_h^{L_s}(a\circ\varphi_t)\\
=\sum_{j=0}^{N-1}
U(-js)\big(U(-s)\Op_h^{L_s}(a\circ\varphi_{(N-1-j)s})U(s)-\Op_h^{L_s}(a\circ\varphi_{(N-j)s})\big)U(js).
\end{gathered}
$$
Since $U(js)$ is unitary, it suffices to prove that uniformly in $j=0,\dots,N-1$
\begin{equation}
  \label{e:elliot-3}
U(-s)\Op_h^{L_s}(a\circ\varphi_{(N-1-j)s})U(s)-\Op_h^{L_s}(a\circ\varphi_{(N-j)s})=\mathcal O(h^{1-\rho})_{L^2\to L^2}.
\end{equation}
Now~\eqref{e:elliot-3} follows from Lemma~\ref{l:egorov-fixed} applied
to $a\circ\varphi_{(N-1-j)s}\in S^{\comp}_{L_s,\rho,0}(T^*M\setminus 0)$.
Here $\varphi_t=\exp(tH_{\sigma_h(P)})$ on $\{1/4<|\xi|_g<4\}$
by~\eqref{e:the-P-2}.
\end{proof}

\medskip\noindent\textbf{Acknowledgements.}
The authors would like to thank Jeffrey Galkowski and Maciej Zworski
for several useful discussions and two anonymous referees for numerous
comments used to improve the manuscript.
This research was conducted during the period SD served as
a Clay Research Fellow and LJ as a Visiting Assistant Professor in Purdue University.


\end{document}